%
%
\documentclass[11pt]{amsart}
\usepackage[all]{xy}
\usepackage{amsfonts,amscd,amssymb,amsmath,mathrsfs}
\setlength{\textwidth}{15  cm}
\setlength{\oddsidemargin}{1 cm}
\setlength{\evensidemargin}{1 cm}
\theoremstyle{plain}
\newtheorem{thm}{Theorem}[section]
\newtheorem{lem}[thm]{Lemma}
\newtheorem{prop}[thm]{Proposition}
\newtheorem{cor}[thm]{Corollary}
\theoremstyle{definition}
\newtheorem{defn}{Definition}[section]

\theoremstyle{remark}
\newtheorem{rmk}{Remark}[section]

\numberwithin{equation}{section}

\begin{document}

\newcommand{\Soc}{\operatorname{Soc}}
\newcommand{\modP}{\mod_{P}\Lambda}
\newcommand{\modl}{\mod \Lambda}
\newcommand{\module}{\operatorname{mod}}
\newcommand{\p}{\operatorname{p}}
\newcommand{\inj}{\operatorname{i}}
\newcommand{\Module}{\operatorname{Mod}}
\newcommand{\Cok}{\operatorname{Coker}}
\newcommand{\Hom}{\operatorname{Hom}}
\newcommand{\h}{\operatorname{h}}
\newcommand{\e}{\operatorname{e}}
\newcommand{\res}{\operatorname{res}}
\newcommand{\Tr}{\operatorname{Tr}}
\newcommand{\TrD}{\operatorname{TrD}}
\newcommand{\rad}{\operatorname {{\bold r}}}
\newcommand{\La}{\operatorname{\Lambda}}
\newcommand\End{\operatorname{End}}
\newcommand\Ext{\operatorname{Ext^1_\Lambda}}
\newcommand\Ex{\operatorname{Ext}}
\newcommand\ann{\operatorname{ann}}
\newcommand\coend{\operatorname{coend}}
\newcommand\Img{\operatorname{Im}}
\newcommand\D{\operatorname{D}}
\newcommand\DTr{\operatorname{DTr}}
\newcommand\Ker{\operatorname{Ker}}
\newcommand\Coker{\operatorname{Coker}}
\newcommand{\tr}{\operatorname {t}}
\newcommand{\ct}{\operatorname{ct}}
\newcommand{\rej}{\operatorname{rej}}
\newcommand{\gd}{\operatorname{{gl.dim}}}
\newcommand{\radic}{\operatorname{rad}}
\newcommand{\add}{\operatorname{add}}
\newcommand{\Ind}{\operatorname{Ind}}
\newcommand{\Sub}{\operatorname{Sub}}
\newcommand{\Fac}{\operatorname{Fac}}
\newcommand{\lap}{\operatorname{l}}
\newcommand{\rap}{\operatorname{r}}
\newcommand{\locn}{\operatorname{lnRep}}
\newcommand{\rep}{\operatorname{Rep}}

\newcommand{\gl}{(\Gamma,\Lambda)}
\newcommand{\poset}{(\Gamma_{0}¥,\Lambda)}
\newcommand{\Stilde}{\tilde S}
\newcommand{\Ttilde}{\tilde T}
\newcommand{\ad}{(+){\rm -admissible}}
\newcommand{\pa}{{\tilde{\mathcal P}}(A)}
\newcommand{\modA}{{\rm mod}A}
\newcommand{\hull}{H_{\Lambda}}
\newcommand{\ds}[1]{\ensuremath{#1}}
\newcommand{\quiver}[2]{\ensuremath{\left( #1, #2 \right) }}
\newcommand{\quiverA}[1]{\ensuremath{\left( A_n, #1 \right) }}
\newcommand{\set}[1]{\ensuremath{\left\{#1\right\}}}

\newcommand{\spec}{(\Gamma,\Lambda,\textrm{\textbf{d}},\mathfrak B)}
\newcommand{\newspec}{(\Gamma,\sigma_x\Lambda)}

\bibliographystyle{plain}
\title[Preprojective roots]{Preprojective roots of Coxeter groups}
\author{Mark Kleiner}
\address{Department of Mathematics, Syracuse University, Syracuse, New 
York 13244-1150}
\email{mkleiner@syr.edu}
\keywords{Coxeter group, Coxeter element, quiver, admissible word, reduced word, preprojective root}
\subjclass[2010]{Primary: 20F55.  Secondary: 16G20.}

\begin{abstract} Certain results on representations of quivers have analogs in the structure theory of general Coxeter groups. A fixed Coxeter element turns the Coxeter graph into an acyclic quiver, allowing for the definition of a preprojective root.  A positive root  is an analog of an indecomposable representation of the quiver. The Coxeter group is finite if and only if every positive root is preprojective, which is analogous to the well-known result that a quiver is of finite representation type if and only if every indecomposable representation is preprojective.  Combinatorics of orientation-admissible words in the graph monoid of the Coxeter graph relates strongly to reduced words and the weak order of the group.
\end{abstract}
\maketitle

\section*{Introduction}

Coxeter groups, in the crystallographic case, have been used in several recent papers on representations of quivers or more general finite dimensional algebras, see for example ~\cite{is2010, ir2011, airt2012, ort2015}.  On the other hand, Pelley and the author used representations of quivers to prove that the powers of a Coxeter element in an infinite irreducible crystallographic group are reduced ~\cite{kp07}, and then Speyer  proved the result for a general Coxeter group ~\cite{spey08}, using combinatorics of ~\cite{kp07} and stripping out the quiver theory.  The current paper shows that certain results on representations of quivers have analogs in the theory of Coxeter groups.  The road from quivers to Coxeter groups goes through the notion of root in view of the results of Kac ~\cite{kac1980}, for a root of a Coxeter group is an analog of a real root of a quiver, and each positive real root of a quiver is the dimension vector of a unique up to isomorphism indecomposable representation. Therefore we view a positive root of a Coxeter group $\mathcal W$ as an analog of an indecomposable representation, and view a finite set of positive roots as an analog of a representation that need not be indecomposable but has no isomorphic direct summands.  

Among the indecomposable representations of an acyclic quiver that correspond to real roots, the most important are preprojective and preinjective representations introduced by Bernstein, Gelfand, and Ponomarev ~\cite{bgp} as those annihilated by a power of the Coxeter functor.  The analog of the Coxeter functor is a Coxeter element, so Igusa and Schiffler ~\cite{is2010} fix a Coxeter element $c\in\mathcal W$ and define a $c$-preprojective  (resp. $c$-projective) root as a positive root sent to a negative root by a positive power of $c$ (resp. by $c$). The element $c$ determines a unique acyclic orientation of the Coxeter graph $\Gamma$ of $\mathcal W$ and thus turns it into a quiver.  The inverse Coxeter element, $c^{-1},$ yields the opposite quiver, so the $c^{-1}$-preprojective roots are analogs of preinjective representations. 

To study $c$-preprojective or $c$-projective roots, we give a different, but equivalent, definition.  Following the suggestion of ~\cite[Note 2, p. 25]{bgp}, we say that a positive root  is $c$-preprojective if there exists a $c$-admissible sequence of vertices of $\Gamma,$ called $(+)$-admissible in  ~\cite{bgp}, for which the associated product of simple reflections  sends the root to a negative root.  In this context  a simple reflection is the analog of a reflection functor, and the advantage is that the collection  of $c$-admissible sequences has a rich combinatorial structure.  The collection has a natural equivalence relation  and a preorder structure that induce  on the set of equivalence classes a partial order closely related to the weak order; thus obtained partially ordered set is a distributive lattice; there is a canonical form for each equivalence class; etc.  These and other results of Pelley, Tyler, and the author ~\cite{kp07, kt05, kt08} hold for an arbitrary acyclic quiver. Based on these results and using the work of Howlett  ~\cite{h} and Speyer ~\cite{spey08}, we show that properties of $c$-preprojective or $c$-projective roots are similar to well-known properties of  preprojective or projective representations of a quiver.

We prove that the Coxeter group $\mathcal W$ is finite if and only if each positive root is $c$-preprojective (Theorem \ref{cprepr5}), and $\mathcal W$ is an elementary abelian 2-group if and only if each positive root is $c$-projective (Proposition \ref{cproj2}).  These statements are analogs of the following well-known  results. A quiver is of finite representation type if and only if each indecomposable representation is preprojective ~\cite[Section VIII.1]{ars}, and a quiver consists of isolated vertices if and only if each indecomposable representation is projective. We obtain (Theorem \ref{cproj}) an explicit description of the $c$-projective roots  similar to that of the indecomposable projective representations  of a quiver ~\cite[Section III.1]{ars}, and show that the linear operators $-c$ and $-c^{-1}$  establish a bijection between the $c$-projective and $c^{-1}$-projective roots.     

To better  handle the combinatorics of $c$-admissible sequences, we use an equivalent but more convenient language of graph monoids  introduced by Cartier and Foata ~\cite{cf1969}.  For any finite undirected graph, the {\em graph  monoid} $\mathfrak M$ is the quotient of the free monoid on the set of vertices modulo the congruence generated by the binary relation $vwRwv,$ for all pairs $\{v,w\}$ of distinct vertices not connected by an edge.  When $\Gamma$ is the graph,  a subset (not a submonoid) $\mathfrak M(c)$ of $\mathfrak M$ corresponds to the equivalence classes of $c$-admissible sequences and, thus, is a distributive lattice having the properties mentioned above.  We say that the elements of $\mathfrak M(c)$ are the {\em $c$-admissible words} of $\mathfrak M.$ The surjective monoid homomorphism $\rho:\mathfrak M\to \mathcal W$ sending each vertex to the associated simple reflection relates combinatorics of $\mathfrak M$ to that of $\mathcal W.$  An element $w\in\mathcal W$ is {\em $c$-admissible} if it has a $c$-admissible preimage under $\rho.$  If $X\in\mathfrak M,$ we say that the word $\rho(X)$ in $\mathcal W$ is {\em reduced} if the length of the element $\rho(X)$ of $\mathcal W$ equals the length of $X.$

Throughout the paper we assume $\mathcal W$ irreducible.  It is straightforward to extend our results to the case when $\mathcal W$ is a finite direct product of irreducible Coxeter groups.

In Section 1 we recall definitions and results about $c$-admissible words.  The notion of {\em principal} $c$-admissible word is important here.  Section 2 deals with $c$-preprojective or $c$-projective roots.  If $\alpha$ is a $c$-preprojective root, we consider the set of elements $X\in\mathfrak M(c)$ for which $\rho(X)\alpha$ is a negative root, and show that the set is a sublattice of $\mathfrak M(c)$ with a unique least element $W_\alpha,$ which must be a principal word.  Likewise, the set of elements of $\mathfrak M(c)$ sending to a negative root each element of a finite set $\Psi$ of $c$-preprojective roots is a sublattice with a unique least element $W_\Psi,$ which must be a join of principal  words.  The latter two statements are parts of Theorem \ref{cprepr2}, which plays a major role in the paper.  The section also contains various properties and characterizations of $c$-preprojective or $c$-projective roots that are analogs of well-known results on representations of quivers.  Section 3 presents the main result, a characterization of finite Coxeter groups in terms of $c$-preprojective roots.  In Section 4 we relate the partial order on $\mathfrak M(c)$ to the left weak order on the set of $c$-admissible elements of $\mathcal W,$ and show among other things that if $X\in\mathfrak M(c),$ then the word $\rho(X)$ is reduced if and only if $X=W_\Psi,$ where  $\Psi$ is a finite {\em independent} set of $c$-preprojective roots; here $\Psi$ is  independent if the decomposition of $W_\Psi$  as a join of principal words has the smallest possible number of terms. Combining these results with a simple inductive construction that produces the canonical form of each principal word in $\mathfrak M(c)$ ~\cite{kt05}, we hope  to continue our study of reduced $c$-admissible words in $\mathcal W.$ 

\section{Admissible words of a graph monoid}\label{prelim}

We begin by recalling some facts, definitions, and notation, using freely  ~\cite{bgp,humph90,kp07, kt05}.  Denote  by $|S|$ the cardinality of a set $S.$\vskip.05in

Given a finite undirected graph $\Gamma=(\Gamma_0,\Gamma_1)$ with the set of vertices $\Gamma_0,$ the set of edges $\Gamma_1,$ and no loops, denote by $\mathfrak M=\mathfrak M_\Gamma$ the graph monoid of $\Gamma$ ~\cite{cf1969}, which is the quotient of the free monoid $\Gamma_0^*$ on the set $\Gamma_0$ modulo the congruence generated by the binary relation $vwRwv,$ for all pairs $\{v,w\}$ of distinct vertices not connected by an edge. The elements of $\mathfrak M$ are all words $X=x_l\dots x_1,\,l\ge0,$ with $x_j\in\Gamma_0$ for all $j$  (this includes the empty word $1$),  the binary operation is concatenation, and two words are equal as  elements of $\mathfrak M$ if and only if one of the words can be obtained from the other by a finite number of interchanges of adjacent letters that are vertices not connected by an edge.  The following notions are well defined. The {\em length} of $X$ is $l=\ell(X).$  The {\em support} of $X,$ Supp$\,X,$ is  the set of distinct vertices among $x_j,\,1\le j\le l.$ The {\em multiplicity} of $v\in\Gamma_0$ in $X,$ $m_X(v),$ is the (nonnegative) number of times $v$ appears among the $x_j,$    and the element $X$ is {\em multiplicity-free} if $m_X(v)\le1$ for all $v\in\Gamma_0.$ The {\em transpose} of $X$ is $X^T=x_1\dots x_l.$

We relate the elements of $\mathfrak M$  to the sequences of vertices of $\Gamma$  by recalling the equivalence relation $\sim$ on the set of all sequences introduced in ~\cite[Definition 1.2]{kt05}.  For any  sequences $U$ and $V,$  set $UrV$ if and only if $U=x_1,\dots, x_i,x_{i+1},\dots, x_l,$  $V=x_1,\dots, x_{i+1},x_i,\dots, x_l,$ and no edge of $\Gamma$ joins $x_i$ and $x_{i+1}.$ Then $\sim$ is the reflexive and transitive closure of the symmetric binary relation $r.$ Every sequence of vertices $x_1,\dots,x_l$ gives rise to the element  $X=x_l\dots x_1$ of $\mathfrak M,$ and $X=Y=y_m\dots y_1$ if and only if the sequences $x_1,\dots,x_l$ and $y_1,\dots,y_m$ are equivalent under $\sim.$   Clearly, the equivalence $\sim$ corresponds to the aforementioned congruence on $\Gamma^*_0,$ so every statement from ~\cite{kt05, kp07,kt08} about the equivalence classes of $\sim$ translates verbatim into a statement about the elements of $\mathfrak M.$

Let $X,Y\in \mathfrak M.$ We set $Y\preceq X$ if $X=UY$ for some $U\in\mathfrak M$ ~\cite[Definition 2.1]{kt05}, and we write $Y\prec X$ if $Y\preceq X$ and $Y\ne X.$ It is straightforward that the binary relation $\preceq$ is a partial order, with 1 being the least element of  the  {\em partially ordered set (poset)}  $\mathfrak M.$   The poset satisfies the {\em descending chain condition}.

 An {\em orientation} $\La$ of $\Gamma$ consists of two functions, $s:\Gamma_1\to\Gamma_0$ and $e:\Gamma_1\to\Gamma_0,$ assigning to each edge $a\in\Gamma_1$ its starting point $s(a)$ and endpoint $e(a).$ The pair $(\Gamma,\La)$ is a {\em quiver} (directed graph).  In the quiver, each edge $a\in\Gamma_1$ becomes an {\em arrow} $a:s(a)\to e(a)$ {\em from}  $s(a)$ {\em to} $e(a).$  Denote by $\La^{op}$ the orientation obtained by reversing the direction of each arrow of $(\Gamma,\La).$  There results the {\em opposite quiver} of $(\Gamma,\La),$ which we denote by $(\Gamma,\La)^{op}=(\Gamma,\La^{op}).$ 
  
 For each $x\in\Gamma_0$ and each orientation $\La,$ denote by $x\cdot\La$ the orientation obtained from $\La$ by reversing the direction of each arrow incident to $x$ and preserving the remaining arrows.  This extends uniquely to a left action of the graph monoid $\mathfrak M$ on the (finite) set of all orientations: if $X=x_l\dots x_1$ then $X\cdot\La=x_l\cdot(\dots (x_1\cdot\La)\dots).$

  A {\em path}  in the quiver $(\Gamma,\La)$ is either a  {\em nontrivial path}, or a {\em trivial path}. A nontrivial path  is a word  $p=a_t\dots a_1,\,t>0,$ with $a_j\in\Gamma_1$ and $e(a_j)=s(a_{j+1}),$ $1\le j<t;$ here $t=\ell(p)$ is the {\em length} of $p.$   By definition, $s(a_1)$ is the starting point, and $e(a_t)$ is the endpoint, of $p.$   One writes $p:s(p)\to e(p)$ and says that $p$ is a {\em path  from}  $s(p)$ {\em to} $e(p).$ There are precisely $|\Gamma_0|$ trivial paths: for each $x\in\Gamma_0,$ the trivial path $e_x$ at $x$ is defined by $s(e_x)=e(e_x)=x$ and $\ell(e_x)=0.$  The paths compose as follows.  For any path $r$ one sets $e_{e(r)}r=re_{s(r)}=r.$ If $p=a_t\dots a_1$ and $q=b_u\dots b_1$ are nontrivial paths satisfying $s(q)=e(p),$ then $qp=b_u\dots b_1a_t\dots a_1.$ A path $p$ is an {\em oriented cycle} if $\ell(p)>0$ and $s(p)=e(p).$ We consider only {\em acyclic quivers}, i.e., those without oriented cycles. Then $\Gamma_0$ becomes a poset by setting $x\le y$ if there exists a path from $x$ to $y.$  We denote this poset by $(\Gamma_0,\La).$
  
For the remainder of this section we fix an acyclic quiver $(\Gamma,\La).$
 
Recall that a subset $Q$ of a poset $P$ is an {\em ideal} if $x\in Q$ and $y\le x$ imply $y\in Q,$ and $Q$ is a {\em filter} if $x\in Q$ and $y\ge x$ imply $y\in Q.$  The {\em principal ideal} (resp. {\em principal filter}) generated by $x$ is $(x)=\{y\in P\,|\, y\le x\}$ (resp. $\langle x\rangle=\{y\in P\,|\, y\ge x\}$).
 
A vertex $x$ of $(\Gamma, \La)$ is a {\em sink}  (resp. {\em source}) if it is a maximal (resp. minimal) element of the poset  $(\Gamma_0,\La).$ If $x$ is a sink or source, the quiver $(\Gamma, x\cdot\La)$ is acyclic.  An element $X=x_l\dots x_1$ of $\mathfrak M$ is $\La$-{\em admissible}, or (+)-admissible in the terminology of ~\cite{bgp}, if $x_1$ is a sink in $(\Gamma,\La),$ $x_2$ is a sink in $(\Gamma,{x_1}\cdot\La),$ $x_3$ is a sink in $(\Gamma,{x_2}{x_1}\cdot\La),$ and so on. By ~\cite[proof of Lemma 1.2, p. 24]{bgp}, the latter definition is independent of the choice of a word representing the element of $\mathfrak M.$ An element $K\in\mathfrak M$ is {\em complete} if it is multiplicity-free, $\La$-admissible, and Supp$\,K=\Gamma_0.$  Complete elements exist, and we always denote a complete element by $K.$ For all integers $t\ge0$, $K^t$ is $\La$-admissible and $K^t\cdot\La=\La.$   Denote by $\mathfrak M(\La)$ the subset of $\mathfrak M$ consisting of all $\La$-admissible elements.  If $X\in\mathfrak M(\La),$ the quiver $(\Gamma,X\cdot\La)$ is acyclic.

\begin{rmk}\label{cprepr.1} Suppose $X=x_l\dots x_1$ belongs  to $\mathfrak M(\La).$\vskip.05in

(a) If $X=ZY,$ then  $Y\in\mathfrak M(\La)$  and $Z\in\mathfrak M(Y\cdot\La).$\vskip.05in

(b) Let  $x_j$ be a sink, $0<j\le l,$ and let $i$ be the smallest index satisfying $x_j=x_i,\,0<i\le j.$ Then no edge of $\Gamma$ joins $x_i$ and $x_h$ if $h<i,$ and we have $X= x_l\dots x_{i+1}x_{i-1}\dots x_1x_i.$ 

Indeed, if $h$ is the smallest index for which an edge joins $x_i$ and $x_h,$ there is an arrow $a:x_h\to x_i.$  If $h<i,$ the arrow $a$ is not affected by  the successive reversing of the direction of the arrows incident to $x_1,\dots,x_{h-1}.$ Hence $x_h$ is not a sink in the quiver $(\Gamma,x_{h-1}\dots{x_1}\cdot\La),$ a contradiction.\end{rmk}

The following statement quotes \cite[Proposition 1.3]{kt05} and ~\cite[Proposition 3.3]{kp07}.

\begin{prop}\label{filter}  
\begin{itemize}
\item[(a)]  Let $\Theta$ be a subset of $\Gamma_0.$  There exists an element $X\in\mathfrak M(\La)$ satisfying $\Theta=\mathrm{Supp}\,X$ if and only if $\Theta$ is a filter of $(\Gamma_0,\La).$ If $\Theta$ is a filter of $(\Gamma_0,\La),$ there exists a unique multiplicity-free $X\in\mathfrak M(\La)$ satisfying $\Theta=\mathrm{Supp}\,X.$
\item[(b)]  For all $X,Y \in\mathfrak M(\La),$ $X\preceq Y$ if and only if $m_X(v)\le m_Y(v)$ for all $v\in\Gamma_0.$
\end{itemize}
\end{prop}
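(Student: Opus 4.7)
The plan is to treat parts (a) and (b) separately, each by induction, with Remark \ref{cprepr.1} supplying the main combinatorial tools.

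For the forward implication of (a), given $X = x_l \cdots x_1 \in \mathfrak M(\La)$ with $\mathrm{Supp}\,X = \Theta$, I fix $x \in \Theta$ and $y > x$ in $(\Gamma_0, \La)$, choose a directed path $x = z_0 \to z_1 \to \cdots \to z_k = y$, and show inductively that each $z_i$ lies in $\Theta$. For the inductive step, let $j$ be the smallest index with $x_j = z_i$; admissibility forces $z_i$ to be a sink in $(\Gamma, x_{j-1} \cdots x_1 \cdot \La)$, so the edge $\{z_i, z_{i+1}\}$, which points $z_i \to z_{i+1}$ in $\La$, must have been flipped an odd number of times by $x_1, \dots, x_{j-1}$. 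Since that edge flips only when $z_i$ or $z_{i+1}$ is applied, and $z_i$ is absent from the prefix by minimality of $j$, we conclude $z_{i+1} \in \Theta$. For the converse and uniqueness in (a), I induct on $|\Theta|$: pick $x$ maximal in the filter $\Theta$, which is automatically maximal in $(\Gamma_0, \La)$ and hence a sink of $\La$. A short check shows $\Theta \setminus \{x\}$ is a filter of $(\Gamma_0, x \cdot \La)$: in $x \cdot \La$ the vertex $x$ is a source, so any directed path out of some $y \in \Theta \setminus \{x\}$ avoids $x$ and is therefore a path in $\La$ staying in $\Theta$. Induction supplies $X' \in \mathfrak M(x \cdot \La)$ with support $\Theta \setminus \{x\}$, and $X = X'x$ is the sought element. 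For uniqueness, Remark \ref{cprepr.1}(b) applied at the unique occurrence of $x$ in each of two competing multiplicity-free words factors them as $X = X''x$ and $Y = Y''x$ with $X'', Y'' \in \mathfrak M(x \cdot \La)$ multiplicity-free of support $\Theta \setminus \{x\}$, and induction forces $X'' = Y''$.

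For (b), the forward implication is immediate from $m_{UX} = m_U + m_X$. For the converse I induct on $\ell(X)$. The base $X = 1$ is trivial; otherwise write $X = X''x_1$ with $x_1$ a sink of $\La$ and $X'' \in \mathfrak M(x_1 \cdot \La)$ (Remark \ref{cprepr.1}(a)). Since $m_Y(x_1) \ge m_X(x_1) \ge 1$, Remark \ref{cprepr.1}(b) yields $Y = Y''x_1$ with $Y'' \in \mathfrak M(x_1 \cdot \La)$. Both words lose exactly one at $x_1$, so the multiplicity inequality passes to $X''$ and $Y''$, and the inductive hypothesis in the orientation $x_1 \cdot \La$ supplies $U \in \mathfrak M$ with $Y'' = UX''$; right-concatenating $x_1$ yields $Y = UX$.

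The main obstacle is the forward direction of (a): transferring the partial-order data from $(\Gamma_0, \La)$ into $\Theta$ is not purely formal, since one must track arrow reversals along a directed path. The parity observation, that each edge of $\Gamma$ flips exactly when one of its two endpoints is applied, is what converts the sink condition in an intermediate orientation into membership in $\Theta$. Once (a) is established, part (b) and the uniqueness in (a) are routine inductions on $|\Theta|$ or $\ell(X)$ orchestrated by Remark \ref{cprepr.1}.
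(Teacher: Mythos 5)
Your proof is correct. Note that the paper itself gives no argument for Proposition \ref{filter}: it is quoted verbatim from \cite[Proposition 1.3]{kt05} and \cite[Proposition 3.3]{kp07}, so there is nothing internal to compare against; your inductive sink-peeling argument (together with the parity count of arrow reversals along a directed path for the forward direction of (a), and the use of Remark \ref{cprepr.1}(b) to commute the first occurrence of a sink to the front for uniqueness and for part (b)) is a sound, self-contained proof in the same spirit as the cited sources.
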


We quote ~\cite[Definition 3.4, Proposition 3.6, Theorem 3.7, and the proof of part (3) of the latter]{kp07}.

\begin{thm}\label{lattice}
\begin{itemize}
\item[(a)]  The poset $(\mathfrak M(\La),\preceq)$ is a lattice with meet $\wedge$ and join $\vee.$
\end{itemize}
\vskip.02in\noindent For the remaining assertions assume $X,Y \in\mathfrak M(\La).$ 
\begin{itemize}
\item[(b)]  $X= V(X\wedge Y)$ and $Y= W(X\wedge Y),$ where  the elements $V$ and $W$ of $\mathfrak M$ are uniquely determined  and $\mathrm{Supp}\,V\cap\, \mathrm{Supp}\,W=\emptyset.$
\item[(c)]  No edge of $\Gamma$ joins a vertex from $\mathrm{Supp}\,V$ and a vertex from $\mathrm{Supp}\,W.$  Hence $VW= WV.$
\item[(d)] $X\vee Y= VW(X\wedge Y)= WX= VY.$
\end{itemize}
\end{thm}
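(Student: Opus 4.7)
The plan is to prove (a)--(d) simultaneously by induction on $\ell(X) + \ell(Y)$, constructing the meet explicitly and then reading off $V$, $W$, and the join. Proposition \ref{filter}(b) is the foundational observation: an element of $\mathfrak M(\La)$ is uniquely determined by its multiplicity function, so it suffices to produce an element of $\mathfrak M(\La)$ whose multiplicities are $\min(m_X, m_Y)$.

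The induction step splits into two cases. \textbf{Common sink case.} If some vertex $x$ is simultaneously a sink of $\La$ and lies in $\mathrm{Supp}\,X \cap \mathrm{Supp}\,Y$, Remark \ref{cprepr.1}(b) lets me pull it to the rightmost position of both words, giving $X = X'x$ and $Y = Y'x$ with $X', Y' \in \mathfrak M(x \cdot \La)$ by Remark \ref{cprepr.1}(a). The inductive hypothesis in $\mathfrak M(x \cdot \La)$ supplies $Z'$, $V'$, $W'$ satisfying (a)--(d), and the choices $Z = Z'x$, $V = V'$, $W = W'$ inherit all required properties verbatim.

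\textbf{No common sink case.} This absorbs all base cases. I would first show $\mathrm{Supp}\,X \cap \mathrm{Supp}\,Y = \emptyset$: if a common vertex $v$ were not a sink of $\La$, any outgoing arrow $v \to u$ would, by $\La$-admissibility, force $u$ to precede $v$ in both words; iterating along a directed path of $\La$ produces a common sink, contradicting the case hypothesis. Next I would prove the stronger non-adjacency claim: an edge oriented $u \to v$ in $\La$ with $u \in \mathrm{Supp}\,X$ and $v \in \mathrm{Supp}\,Y$ would need to be reversed before $v$ becomes a sink while processing $Y$, forcing $u \in \mathrm{Supp}\,Y$, which is excluded. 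Setting $Z = 1$, $V = X$, $W = Y$ then verifies (b) and (c) directly.

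The remaining assertions are formal consequences. The identity $m_Z = \min(m_X, m_Y)$ together with Proposition \ref{filter}(b) identifies $Z$ as the meet, and uniqueness of $V, W$ follows because, by Remark \ref{cprepr.1}(a), they lie in $\mathfrak M(Z \cdot \La)$, where Proposition \ref{filter}(b) recovers elements from their multiplicities. For (d), non-adjacency of $\mathrm{Supp}\,V$ and $\mathrm{Supp}\,W$ means that applying $V$ affects only edges disjoint from the sink-check of each letter of $W$ (and vice versa), so $VW = WV$ in $\mathfrak M$ and $WX = WVZ = VWZ = VY$ lies in $\mathfrak M(\La)$ with multiplicities $\max(m_X, m_Y)$, which Proposition \ref{filter}(b) recognizes as $X \vee Y$. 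The main obstacle is the no-common-sink case: extracting full support disjointness and edge non-adjacency from the mere absence of a shared sink of $\La$ depends essentially on the interplay of acyclicity of $\La$ with $\La$-admissibility, and this propagation argument along directed paths is the technical heart of the proof.
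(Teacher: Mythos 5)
The paper itself does not prove this theorem: it is quoted verbatim from \cite{kp07} (Definition 3.4, Proposition 3.6, and Theorem 3.7 there), so there is no in-paper argument to compare against, and your proof must be judged on its own terms. Your strategy --- induct on $\ell(X)+\ell(Y)$, peel off a common sink $x$ of $(\Gamma,\La)$ from the right of both words via Remark \ref{cprepr.1}(b) when one exists, and otherwise show the two supports are disjoint and non-adjacent so that $X\wedge Y=1$ --- is sound. The bookkeeping with multiplicity functions via Proposition \ref{filter}(b) correctly identifies the element $Z$ with $m_Z=\min(m_X,m_Y)$ as the meet and $VWZ$ (once shown to lie in $\mathfrak M(\La)$, which your non-adjacency claim delivers) as the join, and it also yields the uniqueness of $V,W$ and the disjointness of their supports. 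The propagation argument for $\mathrm{Supp}\,X\cap\mathrm{Supp}\,Y=\emptyset$ in the no-common-sink case is correct: a common non-sink vertex $v$ with an arrow $v\to u$ forces $u$ to occur before the first occurrence of $v$ in both words, and acyclicity of $\La$ drives this along a directed path to a common sink, contradicting the case hypothesis.

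One step is misargued, although the conclusion survives. You justify non-adjacency of $\mathrm{Supp}\,X$ and $\mathrm{Supp}\,Y$ by saying that an arrow $u\to v$ with $u\in\mathrm{Supp}\,X$ and $v\in\mathrm{Supp}\,Y$ ``would need to be reversed before $v$ becomes a sink while processing $Y$.'' It would not: that arrow already points \emph{into} $v$, so it is no obstruction to $v$ becoming a sink, and since $u\notin\mathrm{Supp}\,Y$ it is never touched while processing $Y$. The obstruction sits at the other end of the arrow: it points \emph{out of} $u$, so before the first occurrence of $u$ in $X$ can be a sink the arrow must be reversed by an earlier occurrence of $v$ in $X$, forcing $v\in\mathrm{Supp}\,X$ and contradicting the support disjointness you have just established; an edge oriented from $\mathrm{Supp}\,Y$ toward $\mathrm{Supp}\,X$ is handled symmetrically. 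With that one sentence repaired --- it is exactly the mechanism you already used for the disjointness claim --- the proof goes through.
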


The following definition quotes  ~\cite[Definitions 1.5 and 2.2]{kt05}.

\begin{defn}\label{filter1}  The {\em hull} of a  filter $\Theta$ of $(\Gamma_0,\La)$   is the smallest filter $H_{\La}(\Theta)$ of  $(\Gamma_0,\La)$ that contains $\Theta,$ as well as each vertex of $\Gamma_0\setminus\Theta$ joined by an edge to a vertex in $\Theta.$

An  element $X\in\mathfrak M$ is {\em $\La$-principal of size} $r>0$ if $X= X_r\dots X_1,$ where:\vskip.02in 

(i) $X_j\in\mathfrak M$ is multiplicity-free, $0<j\le r;$  

(ii) Supp$\,X_r=\langle x\rangle$ is the principal filter of $(\Gamma_0,\La)$ generated by some $x\in \Gamma_0;$ and 

(iii) $H_{\La}(\mathrm{Supp}\, X_{j+1})=\mathrm{Supp}\,X_j,$ $0<j<r.$  
\vskip.02in\noindent If (i) - (iii) hold, we write $X=W_{r,x}$  and say that $X_r\dots X_1$ is the {\em canonical form} of $X.$ 

By definition, the empty word $1$ is $\La$-principal of size 0. Denote by ${\mathfrak P}(\La)$ the set of $\La$-principal elements of $\mathfrak M.$
\end{defn}

If $X\in{\mathfrak P}(\La),$ the full subgraph of $\Gamma$ determined by Supp$\,X$ is connected.\vskip.03in

We now quote  ~\cite[Proposition 1.11 (b) and Corollaries 2.2 and 2.3]{kt05}.

\begin{prop}\label{filter2}
\begin{itemize}
\item[(a)]  ${\mathfrak P}(\La)\subset\mathfrak M(\La).$
\item[(b)]  Let $Y= Y_q\dots Y_1$ be an element of $\mathfrak M$ where $Y_i\in\mathfrak M$ is multiplicity-free, $0<i\le q;$ $\mathrm{Supp}\,Y_i$ is a filter of $(\Gamma_0,\La);$ and $H_{\La}(\mathrm{Supp}\, Y_{i+1})\subset\mathrm{Supp}\,Y_i,\, 0<i< q.$  Then $Y\in\mathfrak M(\La),$ and $W_{r,x}\preceq Y$ if and only if $r\le q$ and $x\in\mathrm{Supp}\,Y_r.$ In particular, $W_{r,x}\preceq K^r$ for any complete element $K.$ 
\item[(c)] $W_{r,x}= W_{q,y}$ if and only if $r=q$ and $x=y.$
\item[(d)] If $W_{r,x}=x_l\dots x_1$ then $x_l=x.$
\end{itemize}
\end{prop}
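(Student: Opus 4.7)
The plan is to handle (b) first (with (a) as its specialization), then (c), then the more subtle (d). The main tools throughout are Propositions \ref{filter}(a) and \ref{filter}(b) together with the decreasing-chain structure of the block supports: $\mathrm{Supp}\,X_{j+1}\subset H_{\Lambda}(\mathrm{Supp}\,X_{j+1})=\mathrm{Supp}\,X_j$ for $W_{r,x}$, and the analogous chain for $Y$.

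For the admissibility assertion in (b), and hence (a), I induct on $q$, the base $q=1$ being Proposition \ref{filter}(a). For the step, set $Y':=Y_{q-1}\cdots Y_1\in\mathfrak{M}(\Lambda)$; by Proposition \ref{filter}(a) it suffices to show that $\mathrm{Supp}\,Y_q$ is a filter of $(\Gamma_0,Y'\cdot\Lambda)$. The crucial observation is an edge parity count: any $v\in\mathrm{Supp}\,Y_q$ adjacent in $\Gamma$ to some $w\notin\mathrm{Supp}\,Y_q$ forces $w\in H_{\Lambda}(\mathrm{Supp}\,Y_q)\subset\mathrm{Supp}\,Y_{q-1}$; the decreasing chain then makes both $v$ and $w$ appear once in each of $Y_1,\ldots,Y_{q-1}$, so the edge $\{v,w\}$ is flipped $2(q-1)$ times in $Y'$. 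Its direction in $Y'\cdot\Lambda$ therefore equals that in $\Lambda$, pointing into $\mathrm{Supp}\,Y_q$ because the latter is already a filter of $(\Gamma_0,\Lambda)$.

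For the comparison statement in (b), the decreasing chains give $m_{W_{r,x}}(v)=\max\{j:v\in\mathrm{Supp}\,X_j\}$ and similarly $m_Y(v)=\max\{j:v\in\mathrm{Supp}\,Y_j\}$, so I apply the multiplicity criterion of Proposition \ref{filter}(b). The forward direction tests $v=x$: from $m_{W_{r,x}}(x)=r\le m_Y(x)\le q$ I conclude $r\le q$ and $x\in\mathrm{Supp}\,Y_r$. The converse is a downward induction on $j$ proving $\mathrm{Supp}\,X_j\subset\mathrm{Supp}\,Y_j$, based on $\mathrm{Supp}\,X_r=\langle x\rangle\subset\mathrm{Supp}\,Y_r$ (valid because $\mathrm{Supp}\,Y_r$ is a filter containing $x$) and the step $\mathrm{Supp}\,X_j=H_{\Lambda}(\mathrm{Supp}\,X_{j+1})\subset H_{\Lambda}(\mathrm{Supp}\,Y_{j+1})\subset\mathrm{Supp}\,Y_j$. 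The ``in particular'' statement is the case $q=r$ with each $\mathrm{Supp}\,K=\Gamma_0\ni x$. Part (c) is then an immediate double application of (b): $W_{r,x}=W_{q,y}$ yields $r\le q$, $q\le r$, $x\in\langle y\rangle$, and $y\in\langle x\rangle$, whence $r=q$ and $x=y$ in the poset $(\Gamma_0,\Lambda)$.

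Part (d) is the most intricate, and invariance under commutations is where I expect the main work. The approach has two steps. First, in the canonical form $W_{r,x}=X_r\cdot X_{r-1}\cdots X_1$, the same parity count as above shows that every vertex of $\langle x\rangle$ is reflected $r-1$ times in the prefix $X_{r-1}\cdots X_1$, hence every edge inside the subquiver on $\langle x\rangle$ is flipped $2(r-1)$ times. Therefore $\Lambda':=X_{r-1}\cdots X_1\cdot\Lambda$ restricted to $\langle x\rangle$ agrees with $\Lambda$ restricted to $\langle x\rangle$. Since $x$ is the unique minimum of $\langle x\rangle$, it is the unique source of this subquiver in $\Lambda$ (hence in $\Lambda'$), so $x$ is the last sink processed in any admissible representative of $X_r$, i.e., the leftmost letter of that representative. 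The second step rules out the possibility that a commutation in $\mathfrak{M}$ moves a different letter to the leftmost position. For any $y\in\mathrm{Supp}\,W_{r,x}$ with $y\ne x$, I exhibit a neighbor $u$ of $y$ in $\Gamma$ which occurs strictly to the left of the leftmost occurrence of $y$ in the admissible canonical representative; since $u$ and $y$ are neighbors they cannot be swapped, so this relative order is preserved across all representatives, preventing $y$ from reaching the leftmost position. If $y\in\langle x\rangle$, take $u$ the penultimate vertex on a directed path from $x$ to $y$ inside $\langle x\rangle$, which is an immediate predecessor of $y$ in the subquiver on $\langle x\rangle$; if $y\in\mathrm{Supp}\,X_j\setminus\mathrm{Supp}\,X_{j+1}$ with $j<r$, the hull condition $y\in H_{\Lambda}(\mathrm{Supp}\,X_{j+1})$ produces a neighbor $u\in\mathrm{Supp}\,X_{j+1}$, whose occurrence in the block $X_{j+1}$ lies to the left of $y$'s leftmost occurrence in the block $X_j$.
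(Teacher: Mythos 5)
First, a point of comparison: the paper gives no proof of Proposition \ref{filter2} at all --- it is quoted from \cite{kt05} --- so your argument can only be judged on its own merits, and most of it holds up. The parity count showing that a decreasing chain of filters yields an admissible word, the reduction of the comparison in (b) to the multiplicity criterion of Proposition \ref{filter}(b) via $m_{W_{r,x}}(v)=\max\{j: v\in\mathrm{Supp}\,X_j\}$, the monotonicity of $H_{\Lambda}$ in the downward induction, and the derivation of (c) from (b) are all correct. Two smaller caveats: the statement is only true under the reading (implicit in Definition \ref{filter1} and in your proof) that each block is the unique multiplicity-free \emph{admissible} word of Proposition \ref{filter}(a) with the stated support, not an arbitrary multiplicity-free word with that support; and in the inductive step for admissibility, showing that $\mathrm{Supp}\,Y_q$ is a filter of $(\Gamma_0,Y'\cdot\Lambda)$ is not quite enough --- you also need the interior edges of $\mathrm{Supp}\,Y_q$ to be unchanged, so that $Y_q$ itself, and not merely some word with the same support, is admissible for $Y'\cdot\Lambda$. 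Your own parity count supplies this, so that is only a presentational slip.

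The genuine gap is in the last clause of your treatment of (d). You claim that $y\in H_{\Lambda}(\mathrm{Supp}\,X_{j+1})\setminus\mathrm{Supp}\,X_{j+1}$ ``produces a neighbor $u\in\mathrm{Supp}\,X_{j+1}$.'' That is false: by Definition \ref{filter1} the hull of a filter $\Theta$ is the smallest filter containing $\Theta$ together with the vertices adjacent to $\Theta$, hence equals $\Theta\cup\bigcup_{u}\langle u\rangle$ with $u$ ranging over the neighbors of $\Theta$, and this may contain vertices lying strictly above such a $u$ with no edge to $\Theta$ at all. Concretely, in the quiver with arrows $a\to b$, $c\to b$, $c\to e$ one has $H_{\Lambda}(\{b\})=\{a,b,c,e\}$, and $e$ has no neighbor in $\{b\}$; this occurs in the canonical form of $W_{2,b}$. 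For such a $y$ your chosen $u$ does not exist, so the obstruction to moving $y$ to the leftmost position must be found elsewhere. The repair stays inside your framework: if $y$ is not adjacent to $\mathrm{Supp}\,X_{j+1}$, then $y>u$ strictly for some neighbor $u$ of $\mathrm{Supp}\,X_{j+1}$, and the immediate predecessor of $y$ on a directed path from $u$ to $y$ lies in $\mathrm{Supp}\,X_j$ and occurs to the left of $y$ inside the block $X_j$, by the same heads-before-tails property of admissible orderings that you use (but do not actually verify) in the case $y\in\langle x\rangle$. With that case added, and that property spelled out, part (d) goes through.
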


\begin{defn}\label{independent.1}  A finite subset $\{X_1,\dots, X_m\}$ of $\mathfrak P(\La)$ is {\em independent} if whenever $X_1\vee\dots\vee X_m=Y_1\vee\dots\vee Y_q$ with $Y_j\in\mathfrak P(\La),\,0<j\le q,$ then $m\le q.$
\end{defn}

 We quote ~\cite[Proposition 4.2(3)]{kp07}.
 
 \begin{prop}\label{independent.2}  For all $X\in\mathfrak M(\La)$ there exists an independent subset $\{X_1,\dots, X_m\}$ of $\mathfrak P(\La)$ satisfying $X=X_1\vee\dots\vee X_m.$ If $\{Y_1,\dots, Y_q\}$ is an independent subset of $\mathfrak P(\La)$ satisfying   $X=Y_1\vee\dots\vee Y_q,$ then $q=m$ and there exists a reindexing so that $Y_i= X_i$ for all $i.$
 \end{prop}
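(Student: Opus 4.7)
The plan is to prove existence by producing a canonical decomposition, then to derive uniqueness (and independence of the canonical decomposition) via a matching argument. I will use the lattice structure of $\mathfrak M(\La)$ (Theorem \ref{lattice}), the multiplicity criterion for $\preceq$ (Proposition \ref{filter}(b)), and the characterization of when a principal word lies below a given admissible element (Proposition \ref{filter2}(b)).

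For existence, fix $X \in \mathfrak M(\La)$ and let $\mathcal P_X = \{W \in \mathfrak P(\La) \setminus \{1\} : W \preceq X\}$. Any $W_{r,v} \in \mathcal P_X$ satisfies $r \le m_X(v)$ by Proposition \ref{filter}(b) (since $v$ appears in every canonical factor of $W_{r,v}$), so $\mathcal P_X$ is finite; let $\{X_1, \ldots, X_m\}$ be its $\preceq$-maximal elements (with $m = 0$ if $X = 1$). Set $Z = X_1 \vee \cdots \vee X_m$; then $Z \preceq X$ by Theorem \ref{lattice}. To obtain the reverse inequality via Proposition \ref{filter}(b), it suffices to establish the subclaim that $W_{r,v} \preceq X$ whenever $r := m_X(v) > 0$; maximality would then force $W_{r,v} \preceq X_i$ for some $i$, giving $m_Z(v) \ge r$. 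By Proposition \ref{filter2}(b), the subclaim reduces to exhibiting a factorization $X = X'_q \cdots X'_1$ with each $X'_j$ multiplicity-free, each $\mathrm{Supp}\, X'_j$ a filter of $(\Gamma_0, \La)$, $H_\La(\mathrm{Supp}\, X'_{j+1}) \subseteq \mathrm{Supp}\, X'_j$, and $v \in \mathrm{Supp}\, X'_j$ for $j \le r$. Such a factorization is built by an iterative peeling procedure that repeatedly commutes to the right end of $X$ (via Remark \ref{cprepr.1}(b)) those letters that are sinks of the successively reflected orientations, grouping them into multiplicity-free layers.

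For independence and uniqueness, note that the $X_i$ are pairwise $\preceq$-incomparable. Given any decomposition $X = Y_1 \vee \cdots \vee Y_q$ with $Y_j \in \mathfrak P(\La) \setminus \{1\}$, join-primeness of principal words in $\mathfrak M(\La)$---which I would derive from distributivity of the lattice together with the canonical form of Definition \ref{filter1} and the identification in Proposition \ref{filter2}(c) and (d)---yields, for each $i$, some $j(i)$ with $X_i \preceq Y_{j(i)}$. Since $Y_{j(i)} \in \mathcal P_X$ and $X_i$ is maximal there, $X_i = Y_{j(i)}$, so $i \mapsto j(i)$ is an injection and $q \ge m$. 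This shows $\{X_1, \ldots, X_m\}$ is independent; if $\{Y_j\}$ is also independent, symmetry gives $m = q$ and the two sets coincide up to reindexing.

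I expect the principal obstacles to be twofold. First, the iterative construction of the factorization of $X$ required by Proposition \ref{filter2}(b) is delicate: although sinks of an acyclic quiver always form a filter of the current orientation, one must ensure that the successive layers have supports that are filters of the \emph{original} poset $(\Gamma_0, \La)$, and the hull-inclusions come from the observation that any neighbor in $\Gamma$ of a vertex in $\mathrm{Supp}\, X'_{j+1}$ must have been consumed into an earlier layer. Second, join-primeness of principal words rests on distributivity of $\mathfrak M(\La)$, a property hinted at in the introductory remarks and presumably spelled out in \cite{kt05, kp07}.
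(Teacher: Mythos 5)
First, note that the paper does not prove this proposition: it is quoted verbatim from \cite[Proposition 4.2(3)]{kp07}, and the paper explicitly defers the construction of the $X_i$ to \cite[Proposition 4.2(1)]{kp07}. So there is no internal proof to compare against, and your proposal has to stand on its own. Its skeleton --- take the $\preceq$-maximal principal words below $X$ for existence, then a prime-element matching argument for uniqueness --- is sensible and is broadly the shape of the argument in the cited source. The difficulty is that the two steps you yourself flag as ``obstacles'' are exactly where all the mathematical content lives, and neither is carried out; as written, the proposal reduces the proposition to two unproven claims of comparable depth to the proposition itself.

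Concretely: (1) your subclaim that $m_X(v)=r>0$ implies $W_{r,v}\preceq X$ rests on every $X\in\mathfrak M(\La)$ admitting a factorization of the kind hypothesized in Proposition \ref{filter2}(b) (multiplicity-free layers whose supports are filters of the \emph{original} poset $(\Gamma_0,\La)$, nested via hulls). That existence statement is itself a main structural theorem of \cite{kt05,kp07}; the ``iterative peeling'' sketch does not explain why the later layers have supports that are filters of $(\Gamma_0,\La)$ rather than of the successively reflected orientations, nor why the hull inclusions hold, and Remark \ref{cprepr.1}(b) only lets you move sinks of the original orientation to the right end, which handles the first layer but not the induction. (2) Join-primeness of principal words does not follow from the tools quoted in this paper. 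From Theorem \ref{lattice}(b)--(d) and Proposition \ref{filter}(b) one gets $m_{A\vee B}(v)=\max\left(m_A(v),m_B(v)\right)$ for each $v$, so $W_{r,x}\preceq Y_1\vee\dots\vee Y_q$ is a coordinatewise inequality against a maximum that may be achieved by different $Y_j$ at different vertices; concluding $W_{r,x}\preceq Y_j$ for a single $j$ requires a genuine argument. Distributivity of $\mathfrak M(\La)$ is mentioned only in the introduction and is never stated as a citable result, and join-irreducibility of $W_{r,x}$ --- which is essentially equivalent to the uniqueness you are trying to establish --- is also left unproved, so the route through Birkhoff-style reasoning risks circularity unless these are supplied independently. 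The surrounding bookkeeping (finiteness of $\mathcal P_X$, the maximality/injectivity argument giving $q\ge m$, and the symmetry step) is correct once these two inputs are in place.
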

 
Note that  ~\cite[Proposition 4.2(1)]{kp07} explains how to construct the words $X_i$ from the given word $X$ in the above proposition.

\section{Preprojective roots}

We now consider quivers arising from Coxeter groups, using freely the terminology of ~\cite{humph90}.
 
Let $\mathcal W=(\mathcal W,S)$ be a {\em Coxeter system},  where $\mathcal W$ is a group with a finite set of generators $S,$ and let $n=|S|.$  The defining relations are $(ss')^{m(s,s')}=1,$ with $s,s'\in S$ and $m(s,s')<\infty,$ where the $m(s,s')$ are the entries of  a {\em Coxeter matrix} $M=(m(s,s'))_{s,s'\in S}.$ Here $M$ is a symmetric $n\times n$ matrix with $m(s,s')\in\mathbb Z\cup\{\infty\};$ $m(s,s)=1$ for all $s\in S;$ and $m(s,s')>1$ whenever $s\ne s'.$ Denote by $\Gamma=(\Gamma_0,\Gamma_1)$  the {\em Coxeter graph} of $\mathcal W.$ The vertices of $\Gamma$ are defined by a bijection $\rho:\Gamma_0\to S.$ There exists one, and only one, edge joining vertices $x$ and $y$ if and only if  $2<m(\rho(x),\rho(y))\le\infty.$  In this paper we assume that $\mathcal W$ is {\em irreducible}, i.e., that the graph $\Gamma$ is {\em connected}.  We denote by the same letter $\rho$ a unique (surjective) monoid homomorphism $\mathfrak M\to \mathcal W$ induced by the bijection $\rho;$ here $\mathfrak M$ is the graph monoid of the Coxeter graph $\Gamma.$ The {\em length} of $w\in\mathcal W$ is $\ell(w)=\ell(X)$ if $w=\rho(X)$ and $\ell(X)\le\ell(Y)$ for all $Y\in\mathfrak M$ satisfying $w=\rho(Y).$  If $\ell(w)=\ell(X)$ and $X=x_l\dots x_1,$ we say that the word $\rho(X)=\rho(x_l)\dots \rho(x_1)$ in $\mathcal W$ is  {\em reduced}, and we also say that $\rho(X)$ is a {\em reduced expression for} $w.$  

\begin{rmk}\label{reduced1} No edge of $\Gamma$ joins vertices $x$ and $y$ if and only if $\rho(x)\rho(y)=\rho(y)\rho(x).$ Hence $X= Y$ implies $\rho(X)=\rho(Y),$ i.e., the map $\rho:\mathfrak M\to \mathcal W$ is well defined.
\end{rmk}

\begin{defn}\label{cprepr1}  Let $s_1,s_2,\dots,s_n$ be the elements of $S$ in some order, and let $\Gamma_0=\{v_1,\dots,v_n\}$ where $s_j=\rho(v_j)$ for all $j.$ The element $c=s_n\dots s_1$  is a {\em Coxeter element} of $\mathcal W.$  The $c$-{\em orientation} of $\Gamma$ is the orientation for which every arrow ${v_j}\to {v_i}$ satisfies $j>i.$  There result an acyclic quiver $(\Gamma,c),$ where we denote the orientation by the same letter $c;$  a poset $(\Gamma_0,c);$ the subset $\mathfrak M(c)$ of $\mathfrak M$ consisting of all {\em $c$-admissible elements}; and the subset $\mathfrak P(c)$ of $\mathfrak M(c)$ consisting of all {\em $c$-principal elements}. An element $w\in\mathcal W$ is  {\em $c$-admissible} if one of its preimages under $\rho$ is  $c$-admissible.
\end{defn}

\begin{rmk}\label{cprepr1.1} If $\Gamma$ is the Coxeter graph and a quiver $(\Gamma,\La)$ is acyclic, then there exists a Coxeter element $c$ for which $\La$ is the $c$-orientation.   If $c$ and $d$ are Coxeter elements, one can verify that $c=d$ if and only if the $c$-orientation and the $d$-orientation of $\Gamma$ coincide, that is, if and only if $(\Gamma,c)=(\Gamma,d)$ as quivers.
\end{rmk}

For the rest of the paper we fix an irreducible Coxeter system $\mathcal W=(\mathcal W,S),$ a Coxeter element $c=s_{n}\dots s_{1},$ and a bijection $\rho:\Gamma_0\to S$ given by $\rho(v_j)=s_j,\,j=1,\dots,n.$

\begin{rmk}\label{cprepr1.11} Let $X=x_l\dots x_1$ be in $\mathfrak M(c).$ 

(a)  The element $K=v_n\dots v_1$ is complete $c$-admissible, $\rho(K)=c,$  and  $XK\in\mathfrak M(c).$  The element $K^T=v_1\dots v_n$ is complete $c^{-1}$-admissible with $\rho(K^T)=c^{-1}.$  We have $(\Gamma,c)^{op}=(\Gamma,c^{-1}).$\vskip.05in

(b) If $x_1=v_j,$ for some $j,$ then $K= v_n\dots v_{j+1}v_{j-1}\dots v_1v_j$ by Remark \ref{cprepr.1}(b), and $c=s_n\dots s_{j+1}s_{j-1}\dots s_1s_j$ by (a).  Therefore $s_jcs_j=s_js_n\dots s_{j+1}s_{j-1}\dots s_1$ is a Coxeter element and ${x_1}\cdot c=s_jcs_j.$  By induction, $\rho(X)c\rho(X^T)$ is a Coxeter element and $X\cdot c=\rho(X)c\rho(X^T).$  

Thus the fact that all Coxeter elements are conjugate if $\Gamma$ is a tree, is a consequence of ~\cite[Theorem 1.2, part 1)]{bgp}, saying that if $\La$ and $\La'$ are orientations of a tree $\Gamma,$ then $\La'=Y\cdot\La,$ for some $Y\in\mathfrak M(\La).$\end{rmk} 

Let   $\mathbb V$ be a real vector space of dimension $n$ with a formal basis  $\{\alpha_s\,|\,s\in S\}$ and symmetric bilinear form $B$ given by $B(\alpha_s,\alpha_{s'})=-2\cos\dfrac\pi{m(s,s')},\,s,s'\in S,$  where $\dfrac\pi\infty=0$ by convention. The values of $B$ are twice those of the bilinear form defined in ~\cite[p. 109]{humph90}.  We need the modification in order to simplify the forthcoming explicit description of $c$-projective roots in Theorem \ref{cproj}(b).  For any $\alpha=\underset{s\in S}\sum c_s\alpha_s$ in $\mathbb V,$  $c_s\in\mathbb R$ is the {\em $s$-coordinate} of $\alpha,$ and the {\em support} of $\alpha$ is Supp$\,\alpha=\{x\in \Gamma_0\,|\,c_{\rho(x)}\ne0\}.$

The group $\mathcal W$ acts on $\mathbb V$ by $s\lambda=\lambda-B(\alpha_s,\lambda)\alpha_s,\, s\in S, \lambda\in\mathbb V,$ and by extending the action from the generators to the whole group.  The action preserves the bilinear form $B.$  A vector $\alpha=w(\alpha_s),$ for some $w\in\mathcal W,s\in S,$ is a {\em root}. The element $s_\alpha=wsw^{-1}$  of $\mathcal W,$ which does not depend on the choice of either $w$ or $s,$ is the {\em reflection associated with} $\alpha.$  The basis vectors $\alpha_s$ are the {\em simple roots}. The elements of $S$ are the {\em simple reflections}.  A root $\alpha$ is {\em positive}, $\alpha>0$ (resp. {\em negative}, $\alpha<0$) if all of its coordinates are nonnegative (resp. nonpositive).  Every root is either positive or negative.  Denote by $\Phi$ the {\em root system} of  $\mathcal W$, which is the set of all roots; denote by $\Phi^+$ the set of all positive roots; and denote by $T$ the set of all reflections in $\mathcal W.$\vskip.05in

We quote ~\cite[p. 372, bottom]{brink1998}.

\begin{prop}\label{connected}  For all $\alpha\in\Phi$ the full subgraph of $\Gamma$ determined by $\mathrm{Supp}\,\alpha$ is connected.
\end{prop}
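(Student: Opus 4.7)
The plan is to argue by contradiction, reducing to the structure of parabolic subgroups. Suppose that $\alpha\in\Phi$ has $\mathrm{Supp}\,\alpha=T_1\sqcup T_2$ with both $T_i$ nonempty and no edge of $\Gamma$ joining a vertex of $T_1$ to a vertex of $T_2.$ Setting $S_i=\rho(T_i)\subseteq S,$ the definition of the Coxeter graph forces $m(s,s')=2,$ hence $ss'=s's,$ whenever $s\in S_1$ and $s'\in S_2.$ It is then a standard consequence that the parabolic subgroup $W_{S_1\cup S_2}$ is the internal direct product $W_{S_1}\times W_{S_2}.$ Writing $\mathbb V_{T_i}=\mathrm{span}\{\alpha_s : s\in S_i\},$ the action of $W_{S_1\cup S_2}$ on $\mathbb V_{T_1}\oplus\mathbb V_{T_2}$ splits as a direct sum with each factor $W_{S_i}$ fixing $\mathbb V_{T_{3-i}}$ pointwise.

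Next I would invoke the classical identification of $\Phi\cap(\mathbb V_{T_1}\oplus\mathbb V_{T_2})$ with the root system of the parabolic $W_{S_1\cup S_2},$ realized in its own geometric representation. This lets me write $\alpha=w(\alpha_s)$ for some $w=w_1w_2\in W_{S_1}\times W_{S_2}$ and some $s\in S_1\cup S_2.$ Assuming $s\in S_1,$ the element $w_2\in W_{S_2}$ fixes $\alpha_s,$ so $\alpha=w_1(\alpha_s)\in\mathbb V_{T_1},$ and thus $\mathrm{Supp}\,\alpha\subseteq T_1,$ contradicting $T_2\ne\emptyset.$

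The main obstacle is the parabolic-root identification invoked above: although standard (see Humphreys, \S5.5), it is not immediate from the geometric representation and is typically proved via an induction on the length of $w$ in an expression $\alpha=w(\alpha_s).$ I would cite it rather than reprove it. With that identification in hand, the rest is elementary, resting only on the observation that reflections from one factor of a direct product fix the simple roots of the other factor.
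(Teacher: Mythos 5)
Your argument is correct, but note that the paper does not prove this statement at all: it is quoted verbatim from Brink \cite[p.~372]{brink1998}, so any proof is ``a different route'' by default. Your reduction is sound: if $\mathrm{Supp}\,\alpha$ split as $T_1\sqcup T_2$ with no connecting edges, then $m(s,s')=2$ for $s\in S_1=\rho(T_1)$, $s'\in S_2=\rho(T_2)$, so $B(\alpha_s,\alpha_{s'})=-2\cos(\pi/2)=0$, the parabolic $W_{S_1\cup S_2}$ is a direct product, $w_2$ fixes $\alpha_s$, and $\alpha=w_1(\alpha_s)$ lands in $\mathbb V_{T_1}$ --- a genuine contradiction. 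The one external input you lean on, that every root of $\mathcal W$ lying in $\mathrm{span}\{\alpha_s : s\in J\}$ is a root of the standard parabolic $W_J$ (equivalently $\alpha=w(\alpha_s)$ with $w\in W_J$, $s\in J$), is indeed standard, but it is worth being aware that this fact is itself usually proved by an induction on the depth $\mathrm{dp}(\alpha)=\min\{\ell(w): w(\alpha)<0\}$, and that the \emph{same} induction proves connectedness of support directly and more cheaply: if $\alpha>0$ is not simple, pick $s$ with $\mathrm{dp}(s\alpha)<\mathrm{dp}(\alpha)$; then $B(\alpha_s,\alpha)>0$ forces an edge from the vertex of $s$ to $\mathrm{Supp}\,(s\alpha)$, and $\mathrm{Supp}\,\alpha\subseteq\mathrm{Supp}\,(s\alpha)\cup\{\rho^{-1}(s)\}$, so connectedness propagates. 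That is essentially Brink's argument. So your proof is valid as written (with the parabolic identification properly cited), but it routes an elementary one-line induction through machinery whose own proof is that induction; if you want a self-contained argument, do the depth induction directly.
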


\begin{defn}\label{cprepr1.2}   An element $X\in\mathfrak M$ {\it negates} a root $\alpha>0$  if $\rho(X)\alpha<0,$ and $X$ negates a finite subset $\Psi$ of $\Phi^+$ if $X$ negates each element of $\Psi.$ Denote by $\mathfrak N(\alpha)$ (resp. $\mathfrak N(\Psi)$) the set of all elements of $\mathfrak M$ that negate $\alpha$ (resp. $\Psi$).  We call $X$  a {\it minimal element} of $\mathfrak N(\alpha)$ (resp. $\mathfrak N(\Psi)$)  if $X$ is minimal with respect to the induced partial order $\preceq$ on $\mathfrak N(\alpha)$  (resp. $\mathfrak N(\Psi)$).
\end{defn}

\begin{rmk}\label{cprepr1.3} Let $X\in\mathfrak N(\alpha)$ for some $\alpha\in\Phi^+.$ 

(a) Supp$\,\alpha\subset\mathrm{Supp}\,X.$

(b) Suppose $X$ is a minimal element of $\mathfrak N(\alpha).$ If $X= ZY$ and $Z\ne1,$ then $\rho(Y)\alpha>0$ and $Z$ is a minimal element of $\mathfrak N(\rho(Y)\alpha).$
\end{rmk}

\begin{lem}\label{disconnected}  For $V,W\in\mathfrak M,$ suppose that $\mathrm{Supp}\,V\cap\,\mathrm{Supp}\,W=\emptyset$ and no edge of $\Gamma$ joins a vertex from $\mathrm{Supp}\,V$ and a vertex from $\mathrm{Supp}\,W.$  If $\alpha\in\Phi^+,$ then $\rho(V)\alpha>0$ and $\rho(W)\alpha>0$ if and only if $\rho(VW)\alpha>0.$ 
\end{lem}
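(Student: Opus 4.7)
The plan hinges on the observation that the hypothesis---disjoint supports with no edges between them---forces pairwise commutation and orthogonality in $\mathbb V$. For any $a\in\mathrm{Supp}\,V$ and $b\in\mathrm{Supp}\,W$, the absence of an edge means $m(\rho(a),\rho(b))=2$, so $\rho(a)\rho(b)=\rho(b)\rho(a)$ (hence $\rho(V)\rho(W)=\rho(W)\rho(V)=\rho(VW)$) and $B(\alpha_{\rho(a)},\alpha_{\rho(b)})=-2\cos(\pi/2)=0$. The latter identity yields $\rho(a)\alpha_{\rho(b)}=\alpha_{\rho(b)}$, so $\rho(V)$ fixes $\alpha_{\rho(b)}$ for every $b\in\mathrm{Supp}\,W$, and symmetrically for $\rho(W)$.

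Next I would set $u:=\rho(V)\alpha-\alpha$ and $v:=\rho(W)\alpha-\alpha$, and verify by an easy induction on $\ell(V)$ that $u$ lies in the span of $\{\alpha_{\rho(a)}\mid a\in\mathrm{Supp}\,V\}$: each simple reflection $\rho(a)$ satisfies $\rho(a)\lambda-\lambda\in\mathbb R\,\alpha_{\rho(a)}$, so a product of such reflections accumulates corrections only in this subspace. The analogous statement holds for $v$ and $W$, and combined with the previous paragraph it shows that $v$ lies in a subspace $\rho(V)$ fixes pointwise. Therefore
\[
\rho(VW)\alpha\;=\;\rho(V)(\alpha+v)\;=\;\rho(V)\alpha+v\;=\;\alpha+u+v.
\]

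The final step is a coordinate comparison. Since $\mathrm{Supp}\,u\subseteq\rho(\mathrm{Supp}\,V)$ and $\mathrm{Supp}\,v\subseteq\rho(\mathrm{Supp}\,W)$ are disjoint, the coordinates of $\alpha+u+v$ decouple: on $\rho(\mathrm{Supp}\,V)$ they equal those of $\alpha+u$, on $\rho(\mathrm{Supp}\,W)$ those of $\alpha+v$, and at every other simple root they coincide with those of $\alpha$ itself, which are nonnegative because $\alpha>0$. The forward implication is then immediate. For the converse, if $\rho(VW)\alpha>0$ then each coordinate of $\rho(V)\alpha=\alpha+u$ is either a coordinate of $\rho(VW)\alpha$ (when $s\in\rho(\mathrm{Supp}\,V)$) or a coordinate of $\alpha$, so all of them are nonnegative; since $\rho(V)\alpha$ is a root it must be positive, and the same reasoning applies to $\rho(W)\alpha$. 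The essential use of the hypothesis is in the first paragraph---without the orthogonality $B(\alpha_{\rho(a)},\alpha_{\rho(b)})=0$ one could not pull $v$ through $\rho(V)$ to obtain the clean decomposition $\rho(VW)\alpha=\alpha+u+v$---and after that the argument is just bookkeeping of coordinates over disjoint supports, so I do not anticipate any serious difficulty.
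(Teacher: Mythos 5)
Your proof is correct, but it takes a genuinely different route from the paper's. The paper argues both implications by contradiction and leans on two external facts: Remark \ref{cprepr1.3}(a) (if $X$ negates $\alpha$ then $\mathrm{Supp}\,\alpha\subset\mathrm{Supp}\,X$) and, crucially, Proposition \ref{connected} (Brink's theorem that the support of any root is connected), which forces $\mathrm{Supp}\,\alpha$ to lie entirely inside $\mathrm{Supp}\,V$ or entirely inside $\mathrm{Supp}\,W$; from there the other factor acts trivially on the relevant coordinates. You instead prove the clean identity $\rho(VW)\alpha=\alpha+u+v$ with $u=\rho(V)\alpha-\alpha$ supported on $\rho(\mathrm{Supp}\,V)$ and $v=\rho(W)\alpha-\alpha$ supported on $\rho(\mathrm{Supp}\,W)$, using only the elementary facts that $s\lambda-\lambda\in\mathbb R\,\alpha_s$ and that $B(\alpha_{\rho(a)},\alpha_{\rho(b)})=0$ across the two supports (note this orthogonality does require $a\ne b$, i.e.\ the disjointness hypothesis, since $m(s,s)=1$). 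Both directions then reduce to reading off coordinates on the three disjoint index sets, plus the dichotomy that a root is positive or negative. What your approach buys is self-containedness --- it avoids invoking the connectivity of root supports entirely and gives a direct rather than contradiction-based argument, with the added bonus of the explicit formula $\rho(VW)\alpha=\rho(V)\alpha+\rho(W)\alpha-\alpha$. What the paper's approach buys is brevity given that Proposition \ref{connected} is already quoted and used elsewhere, and it keeps the reasoning at the level of supports, which is the language the surrounding lattice arguments are written in.
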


\begin{proof}  Remark \ref{reduced1} says that \hskip.05in$\rho(VW)\alpha=\rho(WV)\alpha=\rho(V)[\rho(W)\alpha]=\rho(W)[\rho(V)\alpha].$\vskip.03in

Assume $\rho(V)\alpha>0$ and $\rho(W)\alpha>0.$ If $\rho(VW)\alpha<0,$  then Remark \ref{cprepr1.3}(a) says that Supp$\,\alpha\subset\mathrm{Supp}\, VW=\mathrm{Supp}\, V\cup\,\mathrm{Supp}\, W,$ whence Supp$\,\alpha=(\mathrm{Supp}\,\alpha\cap\mathrm{Supp}\, V)\cup(\mathrm{Supp}\, \alpha\cap\mathrm{Supp}\, W).$ Since no edge of $\Gamma$ joins a vertex from $\mathrm{Supp}\,\alpha\cap\mathrm{Supp}\, V$ and a vertex from $\mathrm{Supp}\,\alpha\cap\mathrm{Supp}\, W,$ then Proposition \ref{connected} says that either $\mathrm{Supp}\,\alpha\cap\mathrm{Supp}\, V=\emptyset,$ or $\mathrm{Supp}\,\alpha\cap\mathrm{Supp}\, W=\emptyset.$  Say, the latter holds. Then Supp$\,\alpha\subset\mathrm{Supp}\, V$ so that the action of $\rho(W)$ does not change the positive coordinates of $\rho(V)\alpha.$  We obtain $\rho(VW)\alpha>0,$ a contradiction.

Conversely, assume $\rho(VW)\alpha>0.$  If, say, $\rho(V)\alpha<0,$ then Supp$\,\alpha\subset\mathrm{Supp}\,V$ whence Supp$\,\alpha\cap\mathrm{Supp}\,W=\emptyset,$ so that the action of $\rho(W)$ does not change the negative coordinates of $\rho(V)\alpha.$ Hence $\rho(VW)\alpha<0,$ a contradiction.
\end{proof}

\begin{defn}\label{cprepr1.4} A root $\alpha>0$  is {\em $c$-preprojective} (resp. $c$-{\it projective}) if  some $X\in\mathfrak M(c)$ (resp. multiplicity-free $X\in\mathfrak M(c)$) negates $\alpha.$   Denote by $\mathcal P(c)$ the subset of $\Phi^+$ consisting of all $c$-preprojective roots.
\end{defn}

\begin{rmk}\label{cprepr1.9} A root $\alpha>0$  is $c$-preprojective if and only if $w(\alpha)<0$ for some $c$-admissible $w\in\mathcal W.$
\end{rmk}

\begin{thm}\label{lattice1}  Let $\alpha\in\mathcal P(c)$ and let $\Psi$ be a finite nonempty subset of $\mathcal P(c).$ Then $\mathfrak N(\alpha)\cap\mathfrak M(c)$ and $\mathfrak N(\Psi)\cap\mathfrak M(c)$ are sublattices of $\mathfrak M(c).$
\end{thm}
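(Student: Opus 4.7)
The plan is to show directly that $\mathfrak N(\alpha)\cap\mathfrak M(c)$ is closed under the lattice operations $\wedge$ and $\vee$ inherited from $\mathfrak M(c)$, and then deduce the statement about $\Psi$ by observing that $\mathfrak N(\Psi)\cap\mathfrak M(c)=\bigcap_{\alpha\in\Psi}(\mathfrak N(\alpha)\cap\mathfrak M(c))$, so an intersection of sublattices is a sublattice.

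Fix $X,Y\in\mathfrak N(\alpha)\cap\mathfrak M(c)$. By Theorem \ref{lattice}(a), both $X\wedge Y$ and $X\vee Y$ lie in $\mathfrak M(c)$, so only the negation condition needs checking. Invoking parts (b)--(d) of Theorem \ref{lattice}, write $X=V(X\wedge Y)$ and $Y=W(X\wedge Y)$, where $\mathrm{Supp}\,V\cap\mathrm{Supp}\,W=\emptyset$, no edge of $\Gamma$ joins these supports, $VW=WV$, and $X\vee Y=VW(X\wedge Y)$. Set $\gamma:=\rho(X\wedge Y)\alpha$, which is a root since $\mathcal W$ permutes $\Phi$. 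Then
\[
\rho(V)\gamma=\rho(X)\alpha<0,\qquad \rho(W)\gamma=\rho(Y)\alpha<0.
\]

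The key step is to rule out $\gamma>0$. Each simple reflection $s\lambda=\lambda-B(\alpha_s,\lambda)\alpha_s$ alters only the $s$-coordinate of $\lambda$, so the action of $\rho(V)$ preserves all coordinates indexed outside $\mathrm{Supp}\,V$. Consequently, if $\gamma>0$ had a positive coordinate at some $y\notin\mathrm{Supp}\,V$, that positive coordinate would survive in $\rho(V)\gamma$, contradicting $\rho(V)\gamma<0$; hence $\mathrm{Supp}\,\gamma\subset\mathrm{Supp}\,V$. The symmetric argument gives $\mathrm{Supp}\,\gamma\subset\mathrm{Supp}\,W$. Since the two supports are disjoint and $\gamma\ne 0$, this is impossible. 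Thus $\gamma<0$, which is precisely the assertion $X\wedge Y\in\mathfrak N(\alpha)$.

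For the join, apply Lemma \ref{disconnected} to the positive root $-\gamma$ and the words $V,W$: from $\rho(V)(-\gamma)>0$ and $\rho(W)(-\gamma)>0$ one infers $\rho(VW)(-\gamma)>0$, i.e.\ $\rho(VW)\gamma<0$. Therefore $\rho(X\vee Y)\alpha=\rho(VW)\gamma<0$, so $X\vee Y\in\mathfrak N(\alpha)$. This completes the sublattice claim for a single root, and the $\Psi$ case follows by intersection. The only delicate point is the dichotomy on the sign of $\gamma$; once the coordinate-support observation eliminates the case $\gamma>0$, the join statement is an immediate application of Lemma \ref{disconnected}, so I expect that sign analysis to be the main (but modest) obstacle.
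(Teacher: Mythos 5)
Your proof is correct and follows essentially the same route as the paper: the same decomposition $X=V(X\wedge Y)$, $Y=W(X\wedge Y)$ from Theorem \ref{lattice}, the same disjoint-support contradiction to force $\rho(X\wedge Y)\alpha<0$ (your coordinate argument is just an inline re-derivation of Remark \ref{cprepr1.3}(a), which the paper cites instead), the same application of Lemma \ref{disconnected} for the join, and the same intersection argument for $\Psi$.
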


\begin{proof}  Let $X,Y\in \mathfrak N(\alpha)\cap\mathfrak M(c).$   By Theorem \ref{lattice}(b),  $X= V(X\wedge Y)$ and $Y= W(X\wedge Y)$  where $\mathrm{Supp}\,V\cap\, \mathrm{Supp}\,W=\emptyset.$ 

If $\rho(X\wedge Y)\alpha>0,$ then  $V,W\in\mathfrak N(\rho(X\wedge Y)\alpha).$ By Remark \ref{cprepr1.3}(a),  Supp$\,\rho(X\wedge Y)\alpha\subset\mathrm{Supp}\,V$ and Supp$\,\rho(X\wedge Y)\alpha\subset\mathrm{Supp}\,W,$ whence $\mathrm{Supp}\,V\cap\mathrm{Supp}\,W\ne\emptyset,$ a contradiction. Thus $\rho(X\wedge Y)\alpha<0,$ that is $X\wedge Y\in\mathfrak N(\alpha)\cap\mathfrak M(c).$

We have just proved that $-\rho(X\wedge Y)\alpha>0.$ By assumption, $\rho(V)[-\rho(X\wedge Y)\alpha]>0$ and $\rho(W)[-\rho(X\wedge Y)\alpha]>0.$  According to Theorem \ref{lattice}(c), no edge of $\Gamma$ joins a vertex from $\mathrm{Supp}\,V$ and a vertex from $\mathrm{Supp}\,W.$  Using  Theorem \ref{lattice}(d) and Lemma \ref{disconnected}, we get  $$\rho(X\vee Y)\alpha=\rho(VW)[\rho(X\wedge Y)\alpha]=-\rho(VW)[-\rho(X\wedge Y)\alpha]<0.$$  Thus $X\vee Y\in\mathfrak N(\alpha)\cap\mathfrak M(c).$ We have proved that $\mathfrak N(\alpha)\cap\mathfrak M(c)$ is a sublattice of $\mathfrak M(c).$

If $\Psi=\{\alpha_1\dots,\alpha_m\}$ then $\mathfrak N(\Psi)=\overset{m}{\underset{i=1}\cap} \mathfrak N(\alpha_i).$ Hence  $\mathfrak N(\Psi)\cap\,\mathfrak M(c)=\overset{m}{\underset{i=1}\cap}\left[ \mathfrak N(\alpha_i)\cap\,\mathfrak M(c)\right]$ is a sublattice because the set of all sublattices of a lattice is closed under intersections.
\end{proof}

Denote by $\mathfrak f(c)$ the set of finite subsets of $\mathcal P(c).$
\begin{thm}\label{cprepr2}  In the setting of Theorem \ref{lattice1}:
\begin{itemize}
\item[(a)] The lattice $\mathfrak N(\alpha)\cap\mathfrak M(c)$ (resp.  $\mathfrak N(\Psi)\cap\mathfrak M(c)$) has  a unique least element $W_\alpha$ (resp.  $W_\Psi$),  which is a minimal element of $\mathfrak N(\alpha)$ (resp. $\mathfrak N(\Psi)$).  We obtain a map $\nu:\mathcal P(c)\to\mathfrak M(c)$ given by $\nu(\alpha)=W_\alpha,$ which extends to the map $\xi:\mathfrak f(c)\to\mathfrak M(c)$ given by $\xi(\Psi)=W_\Psi.$
\item[(b)]   $\Img\nu\subset\mathfrak P(c).$ Hence  $W_{\alpha}=W_{r,x}$ for uniquely determined $r>0, \,x\in \Gamma_0.$  
\item[(c)]   $\alpha=\rho\left(W_{r,x}^T\right)\left(-\alpha_{\rho(x)}\right),$ and $\rho\left(W_{i,x}^T\right)\left(-\alpha_{\rho(x)}\right)>0$ if $0<i< r.$ 
\item[(d)] The map $\nu:\mathcal P(c)\to\mathfrak M(c)$ is injective.
\end{itemize}
\end{thm}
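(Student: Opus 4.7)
The plan proceeds in sequence through (a), (b), (c), (d).

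\textbf{Part (a).} The set $\mathfrak N(\alpha)\cap\mathfrak M(c)$ is a nonempty sublattice of $\mathfrak M(c)$ by Theorem~\ref{lattice1}. Since $\mathfrak M$ satisfies the descending chain condition, the sublattice has minimal elements; meet-closure forces any two such to coincide, yielding a unique least element $W_\alpha$. To see $W_\alpha$ is also minimal in the ambient $\mathfrak N(\alpha)$, note that any $Y\preceq W_\alpha$ in $\mathfrak M$ inherits $c$-admissibility via Remark~\ref{cprepr.1}(a), so $Y\in\mathfrak N(\alpha)$ would place $Y$ in the sublattice and force $W_\alpha\preceq Y$ by leastness. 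The construction of $W_\Psi$ is identical, using $\mathfrak N(\Psi)=\bigcap_{\beta\in\Psi}\mathfrak N(\beta)$.

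\textbf{Part (b).} Apply Proposition~\ref{independent.2} to write $W_\alpha=X_1\vee\cdots\vee X_m$ with $\{X_i\}\subset\mathfrak P(c)$ independent, and assume for contradiction that $m\ge 2$. Independence places $X_1$, $Y:=X_2\vee\cdots\vee X_m$, and $X_1\wedge Y$ strictly below $W_\alpha$ in $\mathfrak M(c)$, so minimality of $W_\alpha$ forces $\rho(X_1)\alpha$, $\rho(Y)\alpha$, and $\delta:=\rho(X_1\wedge Y)\alpha$ to be positive roots (each candidate otherwise strictly precedes $W_\alpha$ in $\mathfrak N(\alpha)\cap\mathfrak M(c)$). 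Decomposing $W_\alpha=X_1\vee Y=VW(X_1\wedge Y)$ via Theorem~\ref{lattice}(b),(d), with $\mathrm{Supp}\,V\cap\mathrm{Supp}\,W=\emptyset$ and no edge joining them (Theorem~\ref{lattice}(c)), yields $\rho(V)\delta=\rho(X_1)\alpha>0$ and $\rho(W)\delta=\rho(Y)\alpha>0$; Lemma~\ref{disconnected} then gives $\rho(VW)\delta=\rho(W_\alpha)\alpha>0$, contradicting $W_\alpha\in\mathfrak N(\alpha)$. Hence $m=1$, $W_\alpha\in\mathfrak P(c)$, and uniqueness of $(r,x)$ follows from Proposition~\ref{filter2}(c).

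\textbf{First equation of (c), and part (d).} By Proposition~\ref{filter2}(d), the leftmost letter of $W_\alpha=W_{r,x}$ is $x$; write $W_\alpha=xY$. Then $Y\prec W_\alpha$ in $\mathfrak M(c)$, so minimality gives $\rho(Y)\alpha>0$. The simple reflection $\rho(x)$ sends exactly the simple root $\alpha_{\rho(x)}$ from $\Phi^+$ to $\Phi^-$, and $\rho(x)\rho(Y)\alpha<0$ forces $\rho(Y)\alpha=\alpha_{\rho(x)}$, whence $\rho(W_\alpha)\alpha=-\alpha_{\rho(x)}$. Simple reflections are involutions, so pairwise cancellation gives $\rho(W_\alpha)\rho(W_\alpha^T)=1$, and inverting produces $\alpha=\rho(W_{r,x}^T)(-\alpha_{\rho(x)})$. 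Part (d) is then immediate: $W_\alpha=W_\beta$ forces the same $(r,x)$ by Proposition~\ref{filter2}(c), and the formula yields $\alpha=\beta$.

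\textbf{Main obstacle: second equation of (c).} For $0<i<r$, Proposition~\ref{filter2}(b) supplies $W_{i,x}\in\mathfrak M(c)$ with $W_{i,x}\prec W_{r,x}$, and I must show $\beta_i:=\rho(W_{i,x}^T)(-\alpha_{\rho(x)})$ is a positive root. The involution identity $\rho(W_{i,x})\rho(W_{i,x}^T)=1$ yields $\rho(W_{i,x})\beta_i=-\alpha_{\rho(x)}<0$, so once positivity is known, $\beta_i\in\mathcal P(c)$ and $W_{i,x}\in\mathfrak N(\beta_i)\cap\mathfrak M(c)$. I plan to prove positivity by induction on $r$: the candidate $\beta_i$ is produced by $W_{i,x}$, whose canonical decomposition has strictly fewer levels than that of $W_{r,x}$, so the inductive hypothesis together with the first equation of (c) and part (b) should identify $W_{\beta_i}=W_{i,x}$, forcing $\beta_i>0$. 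The delicate point is verifying that each intermediate $W_{i,x}$ actually lies in the image of $\nu$, i.e., arises as $W_\beta$ for some $\beta\in\mathcal P(c)$ whenever $W_{r,x}$ does; this requires careful control of how $\rho(W_{i,x}^T)$ acts on $-\alpha_{\rho(x)}$ through the nested hull chain $\langle x\rangle\subseteq H_c(\langle x\rangle)\subseteq H_c^2(\langle x\rangle)\subseteq\cdots$ underlying the principal decomposition, and it is where the subtle interaction between the combinatorics of $\mathfrak P(c)$ and the root system is most forcefully invoked.
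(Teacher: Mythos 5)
Your parts (a) and (d) and the first equation of (c) are correct and essentially match the paper. Part (b) is also correct but takes a genuinely different route: instead of the paper's induction on $\ell(W_\alpha)$ (which passes to $\rho(x_1)\alpha$ over the conjugate Coxeter element $\rho(x_1)c\rho(x_1)$ and invokes the connectedness criterion of ~\cite[Proposition 2.5]{kt08}), you decompose $W_\alpha$ into independent principal words via Proposition~\ref{independent.2} and rule out $m\ge 2$ with Theorem~\ref{lattice} and Lemma~\ref{disconnected}. That argument is sound (independence does force $X_1$, $Y$, and $X_1\wedge Y$ strictly below $W_\alpha$), and it is the same device the paper deploys later in Proposition~\ref{independent1}(a); it trades the external citation for the uniqueness statement of Proposition~\ref{independent.2}.

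The genuine gap is the second inequality of (c), which you leave as a plan rather than a proof, and the plan as stated is circular: to conclude $\beta_i>0$ you propose to identify $W_{\beta_i}=W_{i,x}$, but $W_{\beta_i}$ is only defined once $\beta_i$ is known to be a positive ($c$-preprojective) root, which is exactly what must be shown. No induction on $r$ and no hull-chain analysis are needed. Write the canonical form $W_{r,x}=X_r\dots X_1$; since the supports of the $X_j$ are nested, the uniqueness in Proposition~\ref{filter2}(c) gives $W_{i,x}=X_r\dots X_{r-i+1}$, hence $W_{r,x}=W_{i,x}Z$ in $\mathfrak M$ with $Z=X_{r-i}\dots X_1$. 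Using the first equation of (c),
$$\rho\left(W_{i,x}^T\right)\left(-\alpha_{\rho(x)}\right)=\rho\left(W_{i,x}^T\right)\rho\left(W_{r,x}\right)\alpha=\rho(Z)\alpha.$$
Now $Z$ is a proper right divisor of $W_\alpha$, so $Z\in\mathfrak M(c)$ by Remark~\ref{cprepr.1}(a) and $Z\prec W_\alpha$; minimality from part (a) then forces $\rho(Z)\alpha>0$. This is what the paper's terse "the remaining inequality follows from (a)" amounts to. Your worry about whether each $W_{i,x}$ lies in $\Img\nu$ is beside the point: that fact is a consequence of the inequality (see Proposition~\ref{cprepr3.1}), not an ingredient of its proof.
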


\begin{proof} (a) Since the poset $\mathfrak M$ satisfies the descending chain condition, every subposet of $\mathfrak M$ that is a lattice has a unique least element.  To show, say, that $W_\Psi$ is a minimal element of $\mathfrak N(\Psi),$  suppose $Y\preceq W_\Psi$ and $Y\in\mathfrak N(\Psi).$  Then Remark \ref{cprepr.1}(a) says that $Y\in\mathfrak M(c),$  whence $Y\in\mathfrak N(\Psi)\cap\mathfrak M(c)$ and we must have $W_\Psi\preceq Y.$  Thus $Y= W_\Psi.$\vskip.05in 

(b)  Let $W_{\alpha}=x_l\dots x_1.$ To prove  $W_{\alpha}\in\mathfrak P(c),$ proceed by induction on $l=\ell(W_\alpha).$ If $l=1,$ then ~\cite[Proposition 5.6(a)]{humph90} says that $\alpha=\alpha_{\rho(x_1)}$ is a simple root. Since $x_1$ is a sink of $(\Gamma, c),$ the statement holds. 

 If $l>1,$ suppose that, for all Coxeter  elements $d\in\mathcal W$, the statement holds for all $\beta\in\mathcal P(d)$ satisfying $\ell(W_{\beta})<l$.  Since $l>1$, Remark \ref{cprepr1.11}(b) says that $\rho({x_1})\alpha\in\mathcal P(\rho({x_1})c \rho({x_1})),$ and Remark \ref{cprepr1.3}(b) says that $W_{\rho({x_1})\alpha}=x_l\dots x_2$.  By the inductive hypothesis, $W_{\rho({x_1})\alpha}\in\mathfrak P({\rho({x_1})c \rho({x_1}})).$ Since $W_{\rho({x_1})\alpha}$ is  $\rho({x_1})c \rho({x_1})$-principal, the full subgraph $\Theta$ of $\Gamma$ determined by Supp$\,W_{\rho({x_1})\alpha}$ is connected. Now  ~\cite[Proposition 2.5]{kt08} says that $W_{\alpha}\in\mathfrak P(c)$ if the full subgraph $\Omega$ of $\Gamma$ determined by $\mathrm{Supp}\,W_{\alpha}$ is connected.     

Assume, to the contrary, that $\Omega$ is disconnected.  Then $x_1\not\in\mathrm{Supp}\,W_{\rho({x_1})\alpha}$ and no edge of $\Gamma$ joins $x_1$ and a vertex from $\mathrm{Supp}\,W_{\rho({x_1})}.$ Therefore $W_{\alpha}= W_{\rho({x_1})\alpha}x_1= x_1W_{\rho({x_1})\alpha}$ whence $W_{\rho({x_1})\alpha}\in\mathfrak M(c)$ by Remark \ref{cprepr.1}(a). Since $\rho(W_\alpha)\alpha<0,$ Lemma \ref{disconnected} says that either $x_1\in\mathfrak N(\alpha)$ or $W_{\rho({x_1})\alpha}\in\mathfrak N(\alpha).$  The former contradicts $\ell(W_\alpha)>1.$  The latter contradicts (a), for $W_{\rho({x_1})\alpha}\prec W_\alpha.$ Thus $\Omega$ is connected and $W_{\alpha}\in\mathfrak P(c).$

By Definition \ref{filter1}, $W_{\alpha}=W_{r,x}$ for some $r$ and $x.$ If $W_{\alpha}=W_{q,y}$ then $W_{r,x}= W_{q,y},$ so that $r=q$ and $x=y$ by Proposition \ref{filter2}(c).\vskip.05in

(c) Since $W_{\alpha}=x_l\dots x_1=W_{r,x}$ by (b), then $x_l=x$ by Proposition \ref{filter2}(d).  By (a),  if $0<j<l$ then $\rho(x_j)\dots \rho(x_1)\alpha>0,$ and $\rho(x)\rho(x_{l-1})\dots \rho(x_1)\alpha<0$.  By  ~\cite[Proposition 5.6(a)]{humph90}, $\alpha_{\rho(x)}$ is the only positive root negated by $\rho(x).$  Hence $\rho(x_{l-1})\dots \rho(x_1)\alpha=\alpha_{\rho(x)}$ and $\rho(W_{r,x})\alpha=-\alpha_{\rho(x)}.$  Since $\rho(X)^{-1}=\rho\left(X^T\right)$ for any $X,$ then $\alpha=\rho\left(W_{r,x}^T\right)\left(-\alpha_{\rho(x)}\right).$ If $0<i< r,$ then $W_{i,x}\prec W_{r,x}$ by Proposition \ref{filter2}(b).  Now the remaining inequality follows from (a).\vskip.05in 

(d) If $\beta\in\mathcal P(c)$ and $\nu({\alpha})= \nu({\beta}),$ then  (b) says that $W_{\alpha}= W_{\beta}=W_{r,x}$ for uniquely determined $r>0, \,x\in \Gamma_0.$ By (c), $\alpha=\beta=\rho\left(W_{r,x}^T\right)\left(-\alpha_{\rho(x)}\right).$
\end{proof}

The statement of Theorem \ref{cprepr2}(d) is not true for the map $\xi:\mathfrak f(c)\to\mathfrak M(c)$.  

\begin{defn}\label{size}  If $\alpha\in\mathcal P(c)$ satisfies $W_\alpha= W_{r,x},$ for some $r>0,\ x\in \Gamma_0,$ we say that $\alpha$ is a $c$-preprojective root of {\em size} $r$.    Denote by $\mathcal P(c,r)$ the set of $c$-preprojective roots of size $r.$
\end{defn}
Note that $\mathcal P(c,1)$ is the set of $c$-projective roots, for if a multiplicity-free word $X\in\mathfrak M(c)$ negates a root $\alpha>0,$ then $W_\alpha$ must be multiplicity-free because $W_\alpha\preceq X$ by Theorem \ref{cprepr2}(a). By Theorem \ref{cprepr2}(b), $\mathcal P(c,q)\cap\mathcal P(c,r)=\emptyset$ if $q\ne r.$

\begin{rmk}\label{cprepr2.5} Let $\alpha\in\mathcal P(c,r)$ so that $W_\alpha=W_{r,x},\,x\in\Gamma_0,$ and set $s=\rho(x).$  If $X=UW_\alpha$ and $x\not\in\mathrm{Supp}\,U,$ then $X\in\mathfrak N(\alpha).$

Indeed, Theorem \ref{cprepr2}(c) says that $\rho(X)\alpha=\rho(U)\left[\rho(W_\alpha)\alpha\right]=\rho(U)(-\alpha_s)<0,$ for the $s$-coordinate of $\rho(X)\alpha$ is $-1.$
\end{rmk}

 The following definition makes use of Definition \ref{independent.1} and  of Theorem \ref{cprepr2}(b) saying that $W_{\alpha}\in\mathfrak P(c)$ for all $\alpha\in\mathcal P(c).$
 
\begin{defn}\label{independent}  A finite subset $\{\alpha_1,\dots, \alpha_m\}$ of $\mathcal P(c)$ is {\em independent} if $\{W_{\alpha_1},\dots, W_{\alpha_m}\}$  is an independent subset of $\mathfrak P(c).$  Denote by $\mathfrak i(c)$ the subset of $\mathfrak f(c)$ consisting of all independent subsets of $\mathcal P(c),$ and denote by $\xi:\mathfrak i(c)\to\mathfrak M(c)$ the restriction of the map $\xi:\mathfrak f(c)\to\mathfrak M(c).$
\end{defn}

\begin{prop}\label{independent1}  
\begin{itemize}
\item[(a)] If $\Psi=\{\alpha_1,\dots, \alpha_m\},\,m>0,$ is an independent subset of  $\mathcal P(c),$  then $W_\Psi= W_{\alpha_1}\vee\dots\vee W_{\alpha_m}.$
\item[(b)] The map $\xi:\mathfrak i(c)\to\mathfrak M(c)$ given by $\xi(\Psi)= W_\Psi$ is injective.
\end{itemize}
\end{prop}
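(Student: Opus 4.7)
The plan is to prove (a) by a two-sided inequality in the lattice $\mathfrak{M}(c)$ and then deduce (b) formally from (a) together with Proposition \ref{independent.2}. Since $W_\Psi\in\mathfrak{N}(\alpha_i)\cap\mathfrak{M}(c)$ for each $i$, Theorem \ref{cprepr2}(a) gives $W_{\alpha_i}\preceq W_\Psi$, whence $\bigvee_i W_{\alpha_i}\preceq W_\Psi$. The substantive task is the reverse: to show that $Y:=\bigvee_i W_{\alpha_i}$ negates every $\alpha_i$. Fixing $i$, I would set $X=W_{\alpha_i}$ and $A=\bigvee_{j\neq i}W_{\alpha_j}$, and decompose by Theorem \ref{lattice}(b), (c) as $X=V(X\wedge A)$ and $A=W(X\wedge A)$ with $\mathrm{Supp}\,V\cap\mathrm{Supp}\,W=\emptyset$ and no edges of $\Gamma$ between the two supports; then $Y=X\vee A=VW(X\wedge A)$ by Theorem \ref{lattice}(d). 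Independence of $\{W_{\alpha_1},\ldots,W_{\alpha_m}\}$ forces $X\wedge A\prec X$ strictly, for otherwise $W_{\alpha_i}\preceq A$ would give $Y=A$, expressing $Y$ as a join of $m-1$ elements of $\mathfrak{P}(c)$ and violating Definition \ref{independent.1}.

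The main obstacle is that $A$ typically does not negate $\alpha_i$, so the method used in the proof of Theorem \ref{lattice1} does not apply directly. I would handle this by setting $\gamma:=\rho(X\wedge A)\alpha_i$ and first ruling out $\gamma<0$, since that case would place $X\wedge A$ inside $\mathfrak{N}(\alpha_i)\cap\mathfrak{M}(c)$ strictly below $W_{\alpha_i}$, contradicting minimality. Hence $\gamma>0$. From $\rho(V)\gamma=\rho(X)\alpha_i<0$ one obtains $V\in\mathfrak{N}(\gamma)$, and Remark \ref{cprepr1.3}(a) yields $\mathrm{Supp}\,\gamma\subset\mathrm{Supp}\,V$. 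Combined with the no-edge condition between $\mathrm{Supp}\,V$ and $\mathrm{Supp}\,W$, this gives $B(\alpha_{\rho(v)},\alpha_{\rho(u)})=-2\cos(\pi/2)=0$ for every $v\in\mathrm{Supp}\,W$ and $u\in\mathrm{Supp}\,\gamma$, so each simple reflection corresponding to a letter of $W$ fixes $\gamma$; iterating yields $\rho(W)\gamma=\gamma$. Consequently $\rho(Y)\alpha_i=\rho(V)\rho(W)\gamma=\rho(V)\gamma=\rho(X)\alpha_i<0$, so $Y$ negates $\alpha_i$. Repeating for each $i$ places $Y$ in $\mathfrak{N}(\Psi)\cap\mathfrak{M}(c)$, whence $W_\Psi\preceq Y$ and part (a) follows.

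For (b), if $\xi(\Psi)=\xi(\Psi')$ with $\Psi,\Psi'\in\mathfrak{i}(c)$, then part (a) presents the common value $W_\Psi=W_{\Psi'}$ as two joins of independent subsets of $\mathfrak{P}(c)$, and Proposition \ref{independent.2} equates these subsets as multisets after reindexing. The injectivity of $\nu:\mathcal{P}(c)\to\mathfrak{P}(c)$ from Theorem \ref{cprepr2}(d) then lifts this equality to $\Psi=\Psi'$.
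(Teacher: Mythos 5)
Your proposal is correct and follows essentially the same route as the paper: the same decomposition $W_{\alpha_i}=V(W_{\alpha_i}\wedge A)$, $A=W(W_{\alpha_i}\wedge A)$ via Theorem \ref{lattice}(b)--(d), the same use of independence and of the least-element property from Theorem \ref{cprepr2}(a) to force $\gamma=\rho(W_{\alpha_i}\wedge A)\alpha_i>0$, and the same deduction of (b) from Proposition \ref{independent.2} and Theorem \ref{cprepr2}(d). The only cosmetic difference is that at the last step you verify $\rho(W)\gamma=\gamma$ by direct computation with the form $B$, which amounts to re-proving the relevant case of Lemma \ref{disconnected} rather than citing it.
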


\begin{proof} (a)   We may assume $m>1.$ Since $\mathfrak N(\Psi)\cap\,\mathfrak M(c)=\overset{m}{\underset{i=1}\cap} \left[\mathfrak N(\alpha_i)\cap\,\mathfrak M(c)\right],$ Theorem \ref{cprepr2}(a) says that $W_{\alpha_i}\preceq W_\Psi$ for all $i.$  It suffices to show that $X=W_{\alpha_1}\vee\dots\vee W_{\alpha_m}$ belongs to $\mathfrak N(\alpha_i)$ for all $i,$ for then $X\in\mathfrak N(\Psi)\cap\,\mathfrak M(c)$ and $X\preceq W_\Psi,$ which forces $X= W_\Psi.$ Set $Y=W_{\alpha_1}\vee\dots\vee W_{\alpha_{i-1}}\vee W_{\alpha_{i+1}}\vee\dots\vee W_{\alpha_m}.$

Using parts (b)--(d) of Theorem \ref{lattice}, we get $W_{\alpha_{i}}= V( W_{\alpha_{i}}\wedge Y),$ $Y= W( W_{\alpha_{i}}\wedge Y),$ and  $X= VW\left(W_{\alpha_{i}}\wedge Y\right),$ where Supp$\,V\cap\,\mathrm{Supp}\, W=\emptyset$ and no edge of $\Gamma$ joins a vertex from Supp$\,V$ and a vertex from Supp$\,W.$  Since $\Psi$ is independent, Theorem \ref{cprepr2}(a) says that $W_{\alpha_{i}}\not\preceq Y$ whence $W_{\alpha_{i}}\not\preceq W_{\alpha_{i}}\wedge Y$  so that  $\rho(W_{\alpha_{i}}\wedge  Y)\alpha_i>0$  and $\rho(W)\left[\rho(W_{\alpha_{i}}\wedge  Y)\alpha_i\right]=\rho(Y)\alpha_i>0.$ Since  $\rho(V)\left[\rho(W_{\alpha_{i}}\wedge  Y)\alpha_i\right]<0,$   Lemma \ref{disconnected} says that $\rho(X)\alpha_i<0.$  Hence $X\in\mathfrak N(\alpha_i).$\vskip.05in

(b) Suppose $\Theta\in\mathfrak i(c)$ and $W_\Psi= W_\Theta,$ where $\Psi$ is from (a) and $\Theta=\{\beta_1,\dots, \beta_q\}.$ Then $q>0$ and $W_\Theta= W_{\beta_1}\vee\dots\vee W_{\beta_q}$ by (a). By Proposition \ref{independent.2}, $m=q$ and there exists a reindexing so that  $W_{\alpha_i}= W_{\beta_i}$ for all $i.$  By Theorem \ref{cprepr2}(d), $\alpha_i=\beta_i$ for all $i,$ that is $\Psi=\Theta.$
\end{proof}

We  define the values of the bilinear form $B$ on the paths in $(\Gamma,c)$ by setting $B(e_x)=1$ if $e_x$ is the trivial path at $x\in\Gamma_0,$  and 
$$B(p)=(-1)^tB\left(\alpha_{\rho(e(a_t))},\alpha_{\rho(s(a_t))}\right)\dots B\left(\alpha_{\rho(e(a_1))},\alpha_{\rho(s(a_1))}\right)$$ if $p=a_t\dots a_1$ is a nontrivial path.  It is straightforward that $B(p)>0$ for all paths $p,$  and  that $B(qp)=B(q)B(p)$ whenever $p$ and $q$ are paths satisfying $s(q)=e(p).$ \vskip.05in

The next statement gives a description of the $c$-projective roots.

\begin{thm}\label{cproj}  
Let $s\in S$  and denote by $x$ the unique vertex of $\Gamma$ satisfying $s=\rho(x).$ 
\begin{itemize}
\item[(a)]       The root $\pi_{s}(c)=\rho\left(W_{1,x}^T\right)(-\alpha_s)$  is $c$-projective  with $W_{\pi_{s}(c)}=W_{1,x}.$ The map $S\to\mathcal P(c,1)$ given by $s\mapsto\pi_{s}(c)$ is bijective, so that $\mathcal P(c,1)=\{\pi_{s}(c)\,|\,s\in S\}.$
\item[(b)] Let  $t=\rho(y)\in S,\,y\in\Gamma_0.$  If $x\not\le y,$ the $t$-coordinate of $\pi_{s}(c)$ is $0.$  If $x\le y,$ the $t$-coordinate of $\pi_{s}(c)$ is
$\underset{p}\sum B(p),$  where $p$ runs through all paths from $x$ to $y$ in $(\Gamma,c).$
  Hence
 \centerline{$\pi_{s}(c)=\underset{y\in \Gamma_0,\,x\le y}\sum\left(\underset{p:x\to y}\sum B(p)\right)\alpha_{\rho(y)}.$}\vskip.05in  
\item[(c)]  $\pi_{s}(c)=-c^{-1}\pi_{s}(c^{-1})$ and $\pi_{s}(c^{-1})=-c\pi_{s}(c).$  Therefore, the linear operators $-c$ and $-c^{-1}$ on $\mathbb V$ induce mutually inverse bijections $-c:\mathcal P(c,1)\to \mathcal P(c^{-1},1)$ and $-c^{-1}: \mathcal P(c^{-1},1)\to \mathcal P(c,1)$.
\end{itemize}
\end{thm}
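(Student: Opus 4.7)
\emph{Order of proof.} My plan is to prove (b) first by induction, then deduce (a) fairly easily, and finally combine both with a short factorization argument to settle (c).

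\emph{Parts (b) and (a).} I would induct on $|\langle x\rangle_c|$ for (b). The base case has $x$ a sink of $(\Gamma,c)$, so $W_{1,x}=x$, giving $\pi_s(c)=s(-\alpha_s)=\alpha_s$, matching the single trivial-path contribution. For the inductive step with $|\langle x\rangle_c|>1$, pick a maximal element $y$ of the filter $\langle x\rangle_c$; since $\langle x\rangle_c$ is a filter, $y$ has no outgoing arrows in $(\Gamma,c)$, so $y$ is in fact a sink of $(\Gamma,c)$, and $y\ne x$ because $x$ is the unique minimum of $\langle x\rangle_c$. Write $W_{1,x}^c=Vy$; by Proposition \ref{filter}(a) and uniqueness, $V$ is the $W_{1,x}$-word for the new Coxeter element $c_1=y\cdot c$, whose filter $\langle x\rangle_{c_1}=\langle x\rangle_c\setminus\{y\}$ is one smaller. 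Hence $\pi_s(c)=t\,\pi_s(c_1)$ with $t=\rho(y)$. The reflection $t$ changes only the $\alpha_t$-coefficient, and since $y$ is a sink of $c$, paths $x\to z$ for $z\ne y$ coincide in $c$ and $c_1$, while paths $x\to y$ in $c$ factor uniquely as (path $x\to z'$ in $c_1$) followed by an arrow $z'\to y$ in $c$. Expanding $t\,\pi_s(c_1)$ recovers the claimed formula. Positivity of $\pi_s(c)$ is automatic since each $B(p)>0$. For (a), note $W_{1,x}\in\mathfrak P(c)\subset\mathfrak M(c)$ (Proposition \ref{filter2}(a)) and $\rho(W_{1,x})\pi_s(c)=-\alpha_s$, so $\pi_s(c)\in\mathcal P(c)$ with $W_{\pi_s(c)}\preceq W_{1,x}$ by Theorem \ref{cprepr2}(a). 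Proposition \ref{filter2}(b) with $q=1$ forces $W_{\pi_s(c)}=W_{1,y}$ for some $y\in\langle x\rangle_c$; Theorem \ref{cprepr2}(c) gives $\pi_s(c)=\pi_{\rho(y)}(c)$, and comparing supports via (b) yields $\langle y\rangle_c=\langle x\rangle_c$, so $y=x$. Surjectivity onto $\mathcal P(c,1)$ is immediate from Theorem \ref{cprepr2}(b)(c) applied to any $\alpha\in\mathcal P(c,1)$.

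\emph{Part (c).} By Proposition \ref{filter2}(b) $W_{1,x}^c\preceq K$, so $K=ZW_{1,x}^c$ for a multiplicity-free $Z$ with $\mathrm{Supp}\,Z=\Gamma_0\setminus\langle x\rangle_c$. A short calculation gives
\[
c\pi_s(c)=\rho(Z)\rho(W_{1,x}^c)\rho(W_{1,x}^{c,T})(-\alpha_s)=-\rho(Z)\alpha_s,
\]
so $-c\pi_s(c)=\rho(Z)\alpha_s$, and the remaining task is to identify this with $\pi_s(c^{-1})$. Positivity is easy: since $x\notin\mathrm{Supp}\,Z$ no reflection composing $\rho(Z)$ alters the $\alpha_s$-coefficient, which stays equal to $1$; a root with a positive coefficient must be positive. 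The crux is the following ``transpose trick.'' Taking the transpose of $K=ZW_{1,x}^c$ yields $K^T=W_{1,x}^{c,T}Z^T$, and by Remark \ref{cprepr1.11}(a) $K^T\in\mathfrak M(c^{-1})$. Applying Remark \ref{cprepr.1}(a) to the decomposition then gives $Z^T\in\mathfrak M(c^{-1})$ and $W_{1,x}^{c,T}\in\mathfrak M(Z^T\cdot c^{-1})$. Since by Proposition \ref{filter2}(d) the leftmost letter of $W_{1,x}^c$ is $x$, the rightmost letter of $W_{1,x}^{c,T}$ is $x$, and so $x$ is a sink of $Z^T\cdot c^{-1}$. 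Therefore $xZ^T$ is a multiplicity-free, $c^{-1}$-admissible word, and $\rho(xZ^T)\rho(Z)\alpha_s=\rho(x)\alpha_s=-\alpha_s$, so $xZ^T$ negates $\rho(Z)\alpha_s$. Consequently $\rho(Z)\alpha_s\in\mathcal P(c^{-1},1)$, and by (a) applied to $c^{-1}$ it equals $\pi_t(c^{-1})$ for a unique $t\in S$. By (b) applied to $c^{-1}$, the support of $\pi_t(c^{-1})$ is $\{z:z\le_c\rho^{-1}(t)\}$, which contains $x$ (because the $\alpha_s$-coefficient of $\rho(Z)\alpha_s$ is $1$); simultaneously, $\rho^{-1}(t)$ lies in $\mathrm{Supp}(\rho(Z)\alpha_s)\subseteq(\Gamma_0\setminus\langle x\rangle_c)\cup\{x\}$. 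The intersection $\langle x\rangle_c\cap\bigl((\Gamma_0\setminus\langle x\rangle_c)\cup\{x\}\bigr)=\{x\}$ forces $\rho^{-1}(t)=x$, i.e.\ $t=s$. Thus $-c\pi_s(c)=\pi_s(c^{-1})$; the companion formula follows by applying $-c^{-1}$, and the mutually inverse bijections are immediate.

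\emph{Main obstacle.} The linchpin is the transpose trick in (c). Recognizing that $K^T=W_{1,x}^{c,T}Z^T$, combined with Remarks \ref{cprepr1.11}(a) and \ref{cprepr.1}(a) and Proposition \ref{filter2}(d), produces a $c^{-1}$-admissible negating word for $\rho(Z)\alpha_s$ essentially for free, bypassing an otherwise unpleasant case analysis on how $Z^T$ flips each oriented edge incident to $x$. Once this word $xZ^T$ is in hand, positivity and the identification $t=s$ are routine consequences of parts (a) and (b).
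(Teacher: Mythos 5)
Your argument is correct. For (a) and (b) you essentially reproduce the paper's proof: the same induction that peels a sink off the right end of $W_{1,x}$ and passes to the Coxeter element $y\cdot c$ (your induction variable $|\langle x\rangle|$ equals the paper's $\ell(W_{1,x})$ because $W_{1,x}$ is multiplicity-free, and your maximal element $y$ of the filter is exactly a sink that can be commuted to the rightmost position), the only organizational difference being that you prove the coordinate formula (b) first and read off positivity and $\mathrm{Supp}\,\pi_s(c)=\langle x\rangle$ from it, whereas the paper proves (a) first using the filter property of $\mathrm{Supp}\,W_{\pi_s(c)}$; both orders are free of circularity. Part (c) is where you genuinely diverge. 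The paper verifies $-c^{-1}\pi_s(c^{-1})=\pi_s(c)$ by a direct computation: it refines the factorization to $K^T=U^TYZ$ with $\mathrm{Supp}\,Z=(x)$, $\mathrm{Supp}\,Y=\Gamma_0\setminus[\langle x\rangle\cup(x)]$ and $U=x_{l-1}\dots x_1$, checks that no edge joins $x$ to $\mathrm{Supp}\,Y$ so that $\rho(Y)\alpha_s=\alpha_s$, and cancels everything by hand. You instead compute $-c\pi_s(c)=\rho(Z)\alpha_s$ from the coarser factorization $K=ZW_{1,x}$, produce the multiplicity-free $c^{-1}$-admissible negating word $xZ^T$ by transposing that factorization (using Remark \ref{cprepr1.11}(a), Remark \ref{cprepr.1}(a), and Proposition \ref{filter2}(d) to see that $x$ is a sink of $Z^T\cdot c^{-1}$), conclude from (a) applied to $c^{-1}$ that $\rho(Z)\alpha_s=\pi_t(c^{-1})$ for some $t\in S$, and pin down $t=s$ by intersecting $\langle x\rangle$ with $\{x\}\cup(\Gamma_0\setminus\langle x\rangle)$ via the support formula of (b). Your route buys a shorter calculation and avoids the three-factor decomposition and the lemma $\rho(Y)\alpha_s=\alpha_s$, at the price of invoking the full classification (a) and the explicit coordinate formula (b) for $c^{-1}$; the paper's route is a self-contained identity verification inside (c). Each step of your support argument checks out, so the identification $t=s$ is sound.
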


\begin{proof} Let $W_{1,x}=x_l\dots x_1.$

(a)  We have $\rho\left(W_{1,x}\right)\pi_{s}(c)=\rho\left(W_{1,x}\right)\left[\rho\left(W_{1,x}^T\right)(-\alpha_s)\right]=-\alpha_s<0.$ Since  $x_l=x$ by  Proposition \ref{filter2}(d),  the $s$-coordinate of $\pi_s(c)$ is  $1.$  Therefore $\pi_s(c)\in\mathcal P(c,1)$ and  $x\in\mathrm{Supp}\,W_{\pi_s(c)}$ so that $\mathrm{Supp}\,W_{1,x}=\langle x\rangle\subset\mathrm{Supp}\,W_{\pi_s(c)}$ because $\mathrm{Supp}\,W_{\pi_s(c)}$ is a filter of $(\Gamma_0,c)$  by Proposition \ref{filter}(a).  By Theorem \ref{cprepr2}(a), $W_{1,x}= VW_{\pi_{s}(c)}$  whence $V=1$ because $W_{1,x}$ is multiplicity-free.  Thus $W_{1,x}=  W_{\pi_s(c)}.$

To show the map $s\mapsto  \pi_s(c)$ is injective, let $t\in S$ satisfy $t=\rho(y)$  and $\pi_s(c)=\pi_t(c).$ Then $W_{\pi_{s}(c)}= W_{\pi_{t}(c)}$ by Theorem \ref{cprepr2}(a) so that $W_{1,x}= W_{1,y}$ by what we have just proved. By Proposition \ref{filter2}(c),  $x= y$ so that $s=t.$ 

To show the map is surjective, let $\alpha\in\mathcal P(c,1).$ By Theorem \ref{cprepr2}(a), $W_\alpha$ is multiplicity-free. Then Theorem \ref{cprepr2}(b) says that $W_\alpha=W_{1,x},$ for some $x\in \Gamma_0,$  and Theorem \ref{cprepr2}(c) gives $\alpha=\rho(W_{1,x}^T)\left(-\alpha_{\rho(x)}\right)=\pi_{\rho(x)}(c).$\vskip.05in

(b) By (a), $W_{\pi_s(c)}=W_{1,x},$ and we proceed by induction on $l.$ 

If $l=1,$  then $\pi_s(c)=\alpha_s,$ so that the $t$-coordinate is 1 if $t=s,$ and it is 0 if $t\ne s.$  Since $x$ is a sink, $x\le y$ implies $x=y.$ Since $e_x$ is the only path from $x$ to $x,$ and $B(e_x)=1$ by definition, the statement holds.

If $l>1,$ suppose that, for all Coxeter elements $d\in\mathcal W,$ the statement holds for all $u\in S$ satisfying $\ell\left(W_{\pi_u(d)}\right)<l.$ Set $Y=x_l\dots x_2$ and $\beta=\rho(x_1)\pi_s(c).$ Remark \ref{cprepr1.11}(b) says that $d=x_1\cdot c=\rho(x_1)c\rho(x_1),$ and Remark \ref{cprepr.1}(a) says that $Y\in\mathfrak M(d).$ By Remark \ref{cprepr1.3}(b),  $\beta>0$ and $Y$ is a minimal element of $\mathfrak N(\beta).$ Therefore $\beta\in\mathcal P(d,1)$ and  $Y=W_{\beta}$  according to Theorem \ref{cprepr2}(a). By (a), $\beta=\pi_{u}(d)$ for some $u\in S,$ and $Y=W_{1,z}\in\mathfrak P(d)$ where $z\in\Gamma_0$  satisfies $u=\rho(z).$  By Proposition \ref{filter2}(d), $z=x_l=x$ so that $\beta=\pi_{s}(d)$ and $Y=W_{1,x}.$  In particular, $\{x_2,\dots,x_l\}=\mathrm{Supp}\,Y$ is the principal filter of $(\Gamma_0,d)$ generated  by $x.$  Since $\ell(Y)=l-1,$  the statement holds for $\pi_{s}(d),$ so
\newline\centerline{$\rho(x_1)\pi_{s}(c)=\pi_{s}(d)=\overset{l}{\underset{j=2}\sum}\left(\underset{q:x\to x_j}\sum B(q)\right)\alpha_{\rho({x_j})}$} where, for each $j,$ $q$ runs through all paths from $x$ to $x_j$ in the quiver $(\Gamma,d).$   Applying $\rho(x_1)$ to both sides of the above equality, we get  
\newline\centerline{$\pi_s(c)=\overset{l}{\underset{j=2}\sum}\left(\underset{q:x\to x_j}\sum B(q)\right)\left[\alpha_{\rho({x_j})}-B\left(\alpha_{\rho(x_1)},\alpha_{\rho(x_j)}\right)\alpha_{\rho(x_1)}\right]$} \newline\centerline{$=\overset{l}{\underset{j=2}\sum}\left(\underset{q:x\to x_j}\sum B(q)\right)\alpha_{\rho(x_j)}+\overset{l}{\underset{j=2}\sum}\left(\underset{q:x\to x_j}\sum -B\left(\alpha_{\rho(x_1)},\alpha_{\rho(x_j)}\right)B(q)\right)\alpha_{\rho(x_1)}$}\newline\centerline{$=\overset{l}{\underset{j=1}\sum}\left(\underset{p:x\to x_j}\sum B(p)\right)\alpha_{\rho(x_j)}.$}

\vskip.05in\noindent Note that since $x_1$ is a sink in $(\Gamma,c)$ and a source in  $(\Gamma,d),$  the paths $x\to x_j$ are the same in both quivers  for $j>1.$ And each path $p:x\to x_1$ in $(\Gamma,c)$ satisfies $p=aq$ for a unique path $q:x\to x_j$ with $j>1$ and a unique arrow $a:x_j\to x_1$ in $(\Gamma,c)$ (remember, $x\ne x_1$ because $l>1$).  Here $B(p)=B(a)B(q)=-B\left(\alpha_{\rho(x_1)},\alpha_{\rho(x_j)}\right)B(q).$\vskip.05in
 
(c) The ideal $(x)$ of $(\Gamma_0,c)$ generated by $x$ is the filter of the poset  $\left(\Gamma_0,c^{-1}\right)$ generated by $x.$   By  Proposition \ref{filter}(a), there exists a unique  multiplicity-free $Z\in\mathfrak M(c^{-1})$ for which $(x)=\mathrm{Supp}\,Z.$  Applying (a) to $c^{-1}$ instead of $c,$ in view of Definition \ref{filter1} we get that $\pi_s(c^{-1})=\rho(Z^T)(-\alpha_s)$ is a $c^{-1}$-projective root. Using  Proposition \ref{filter2}(b) and  Remark \ref{cprepr1.11}(a), we obtain $K^T= VZ,$ for some $V.$ Then $K= Z^TV^T$ and Remark \ref{cprepr.1} says that $V^T\in\mathfrak M(c).$  Setting $U=x_{l-1} \dots x_1,$ we note that $U\in\mathfrak M(c)$ and Supp$\,U\subset\mathrm{Supp}\, V^T,$ for Supp$\,Z\cap\,\mathrm{Supp}\,W_{1,x}=(x)\cap\langle x\rangle=\{x\}.$  Since $U$ and $V^T$ are mulltiplicity-free, Proposition \ref{filter}(b) says that $U\preceq V^T$ so that $V^T= Y^TU,$ for some $Y.$ Then $V= U^TY$ and $K^T= U^TYZ$ where Supp$\,Y=\Gamma_0\setminus[\langle x\rangle\cup(x)].$

Let $Y=y_m\dots y_1$ and $0<i\le m.$ Then no edge of $\Gamma$ joins $x$ and $y_i,$  for  there is no arrow $x\to y_i$ in $(\Gamma, c)$ because $y_i\not\in\langle x\rangle,$ and no arrow $y_i\to x$ in $(\Gamma, c)$ because $y_i\not\in(x).$  Hence $\rho({y_i})(\alpha_s)=\alpha_s$ because $B\left(\alpha_s,\alpha_{\rho(y_i)}\right)=0.$ It follows that $\rho(Y)(\alpha_s)=\alpha_s.$

In view of Remark \ref{cprepr1.11}(a), we have
$$-c^{-1}\pi_s\left(c^{-1}\right)=-\rho(K^T)\rho(Z^T)(-\alpha_{s})=-\rho(U^T)\rho(Y)\rho(Z)\rho(Z^T)(-\alpha_{s})=-\rho(U^T)\rho(Y)(-\alpha_{s})=$$
$$-\rho(U^T)(-\alpha_{s})=\rho(U^T)s(-\alpha_{s})=\rho(U^T)\rho(x)(-\alpha_{s})=\rho(W_{1,x}^T)(-\alpha_{s})=\pi_s(c)$$  because $x_l=x.$ The rest is clear.
\end{proof}

By Theorem \ref{cproj}(a), there is a bijection between the vertices of $\Gamma$ and the $c$-projective roots, which is similar to the well-known bijection between the vertices of a quiver and the nonisomorphic indecomposable projective representations of the quiver.  The bijection of Theorem \ref{cproj}(c) is the analog of the bijection between the indecomposable projective and indecomposable injective representations.

\begin{defn}\label{cproj1}  The root $\pi_s(c)$  of Theorem \ref{cproj}(a) is the $c$-projective root {\em associated with} $s\in S.$
\end{defn}

Parts (a) and (c) of the following statement are analogs of the well-known properties of indecomposable preprojective representations of a quiver.

\begin{prop}\label{cprepr3}  Let  $\alpha\in\mathcal P(c,r)$ and let $X_r\dots X_1$ be the canonical form of $W_{\alpha}= W_{r,x},\,r>0,\, x\in\Gamma_0.$  Set $s=\rho(x).$ Let $i$ be an  integer satisfying $0< i<r.$
\begin{itemize}
\item[(a)] $c^i\alpha>0$ and $c^r\alpha<0.$ Hence $c^i\alpha\in\mathcal P(c).$
\item[(b)]  $c^i\alpha=\rho(X_i\dots X_1)\alpha$ and $X_r\dots X_{i+1}$ is the canonical form of $W_{c^i\alpha}.$ 
\item[(c)]   $c^{-i}\pi_s(c)>0$ and  $\alpha=c^{-r+1}\pi_s(c),$ where $\pi_s(c)$ is the $c$-projective root associated with $s.$
\end{itemize}
\end{prop}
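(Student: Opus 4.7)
The plan is to prove the first identity in part (b), $c^i\alpha=\rho(X_i\dots X_1)\alpha$, by induction on $i$, and then to deduce parts (a), the second claim of (b), and (c) from it together with the theorems about $c$-projective roots.

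I begin with two preparatory observations. First, for each $1\leq i\leq r$ the suffix $X_r\dots X_i$ of the canonical form inherits conditions (i)-(iii) of Definition \ref{filter1} (with top vertex still $x$), so it is $c$-principal of size $r-i+1$, hence $c$-admissible by Proposition \ref{filter2}(a). Remark \ref{cprepr.1}(a) then gives $X_i\in\mathfrak M(c)$, and since $X_i$ is multiplicity-free Proposition \ref{filter}(b) yields $X_i\preceq K$. Thus I may factor $K=ZX_i$ in $\mathfrak M$ with $Z$ multiplicity-free and $\mathrm{Supp}\,Z=\Gamma_0\setminus\mathrm{Supp}\,X_i$. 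Second, applying Remark \ref{cprepr1.3} to the decomposition $W_\alpha=(X_r\dots X_{i+1})(X_i\dots X_1)$ will show that $\beta_i:=\rho(X_i\dots X_1)\alpha$ is a positive root and that $X_r\dots X_{i+1}$ is a minimal element of $\mathfrak N(\beta_i)$; combining part (a) of that remark with the nesting $\mathrm{Supp}\,X_r\subseteq\dots\subseteq\mathrm{Supp}\,X_1$ (forced by $\mathrm{Supp}\,X_{j+1}\subseteq H_c(\mathrm{Supp}\,X_{j+1})=\mathrm{Supp}\,X_j$) will yield $\mathrm{Supp}\,\beta_i\subseteq\mathrm{Supp}\,X_{i+1}$.

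The inductive step for (b) is then as follows. Assuming $c^{i-1}\alpha=\beta_{i-1}$, the goal reduces to $c\beta_{i-1}=\beta_i$. Since $K=ZX_i$, I have $c\beta_{i-1}=\rho(Z)\rho(X_i)\beta_{i-1}=\rho(Z)\beta_i$, so it suffices to show that $\rho(Z)$ fixes $\beta_i$. This will amount to verifying $B(\alpha_{\rho(v)},\beta_i)=0$ for every $v\in\mathrm{Supp}\,Z$: such a $v$ lies outside $\mathrm{Supp}\,X_i\supseteq\mathrm{Supp}\,X_{i+1}\supseteq\mathrm{Supp}\,\beta_i$, and if $v$ were joined by an edge to some $u\in\mathrm{Supp}\,X_{i+1}$ then $v\in H_c(\mathrm{Supp}\,X_{i+1})=\mathrm{Supp}\,X_i$, contradicting $v\in\mathrm{Supp}\,Z$. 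Thus $B(\alpha_{\rho(v)},\alpha_{\rho(u)})=0$ for all $u\in\mathrm{Supp}\,\beta_i$, completing the induction.

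With (b)'s first identity in hand, part (a) is immediate: $c^i\alpha=\beta_i>0$ for $0<i<r$, while $c^r\alpha=c\beta_{r-1}$; the minimality of $X_r=W_{1,x}$ in $\mathfrak N(\beta_{r-1})$ combined with Theorem \ref{cprepr2}(a) will force $W_{\beta_{r-1}}=W_{1,x}$, so $\beta_{r-1}=\pi_s(c)$ by Theorem \ref{cproj}(a), and Theorem \ref{cproj}(c) will give $c\beta_{r-1}=-\pi_s(c^{-1})<0$. The second claim of (b) is handled in parallel: $X_r\dots X_{i+1}=W_{r-i,x}$ lies in $\mathfrak N(\beta_i)\cap\mathfrak M(c)$ and is minimal in $\mathfrak N(\beta_i)$, so Theorem \ref{cprepr2}(a) again gives $W_{c^i\alpha}=W_{\beta_i}=X_r\dots X_{i+1}$. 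Finally, for (c), the identification $\pi_s(c)=c^{r-1}\alpha$ from the proof of (a) yields $\alpha=c^{-r+1}\pi_s(c)$, and for $0<i<r$ one has $c^{-i}\pi_s(c)=c^{r-1-i}\alpha$, positive by part (a) (or trivially when $r-1-i=0$).

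The main obstacle is the inductive step of (b)'s first part, where the purely combinatorial hull condition of Definition \ref{filter1} must be converted into the geometric statement that $\rho(Z)$ acts trivially on $\beta_i$. The bridge will be the support containment $\mathrm{Supp}\,\beta_i\subseteq\mathrm{Supp}\,X_{i+1}$ (secured via Remark \ref{cprepr1.3}(a)) coupled with the observation that the equality $H_c(\mathrm{Supp}\,X_{i+1})=\mathrm{Supp}\,X_i$ forbids any edge of $\Gamma$ from $\Gamma_0\setminus\mathrm{Supp}\,X_i$ to $\mathrm{Supp}\,X_{i+1}$, so that the bilinear form $B$ vanishes on all relevant pairs of simple roots.
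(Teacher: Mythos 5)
Your proof is correct, but it takes a genuinely different route from the paper's. For part (b) the paper inducts on the size $r$: it first proves (a) directly (from $W_{r,x}\preceq K^j\iff r\le j$ via Proposition \ref{filter2}(b), plus Remark \ref{cprepr2.5} applied to $K^r=UW_\alpha$ with $x\notin\mathrm{Supp}\,U$), then shows $c\alpha=\rho(X_1)\alpha$ by locating $W_{c\alpha}=W_{r-1,y}$ and forcing $y=x$ through a two-sided lattice comparison involving $W_{r-1,x}K$ and $W_{r-1,y}K$, finishing with the injectivity of $\nu$ (Theorem \ref{cprepr2}(d)). You instead induct on $i$ and compute directly: writing $K=ZX_i$ and showing $\rho(Z)$ fixes $\beta_i=\rho(X_i\dots X_1)\alpha$ because $\mathrm{Supp}\,\beta_i\subseteq\mathrm{Supp}\,X_{i+1}$ (via Remark \ref{cprepr1.3} and Theorem \ref{cprepr2}(a)) while the hull equality $H_c(\mathrm{Supp}\,X_{i+1})=\mathrm{Supp}\,X_i$ isolates $\mathrm{Supp}\,Z$ from $\mathrm{Supp}\,X_{i+1}$, so $B$ vanishes on all relevant pairs. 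This is a nice trade: your argument makes the geometric meaning of the hull condition explicit and yields (a), the second half of (b), and (c) as corollaries of one computation, at the cost of deriving $c^r\alpha<0$ through Theorem \ref{cproj}(c) rather than the paper's more elementary Remark \ref{cprepr2.5} (either works; there is no circularity, since Theorem \ref{cproj} does not depend on this proposition). Two small points: the correct reference for $X_i\in\mathfrak M(c)$ is Proposition \ref{filter2}(b) with $q=1$ (or Proposition \ref{filter}(a)), not Remark \ref{cprepr.1}(a), which only places $X_i$ in $\mathfrak M((X_{i-1}\dots X_1)\cdot c)$; and in the minimality step identifying $W_{\beta_i}=X_r\dots X_{i+1}$ you should note that $X_r\dots X_{i+1}=W_{r-i,x}\in\mathfrak P(c)\subset\mathfrak M(c)$, so that Theorem \ref{cprepr2}(a) applies. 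Neither affects the validity of the argument.
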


\begin{proof} Set $K=v_n\dots v_1$ so that $c=\rho(K).$\vskip.03in

(a) If $j>0$ and $c^j\alpha<0$, then Remark \ref{cprepr1.11}(a) and Theorem \ref{cprepr2}(a) say that $W_{r,x}\preceq K^j$.  By Proposition \ref{filter2}(b), $r\le j$ and  $K^r= UW_{\alpha}$ where $x\not\in\mathrm{Supp}\,U$.  Hence $c^i\alpha>0$ and Remark \ref{cprepr2.5} says that $K^r\in\mathfrak N(\alpha),$ whence  $c^r\alpha=\rho(K^r)\alpha<0$.\vskip.03in

(b) There is nothing to prove if $r=1,$ so let $r>1$ and suppose the statement holds for all $\beta\in\mathcal P(c,r-1).$ We show first that $c\alpha=\rho(X_1)\alpha$ and   $X_r\dots X_2$ is the canonical form of $W_{c\alpha},$ i.e., that the statement holds for $i=1.$

By (a), $c^{r-1}(c\alpha)<0$ and $c^i(c\alpha)>0$ if $0\le i<r-1.$ Hence $c\alpha\in\mathcal P(c,r-1)$ and $W_{c\alpha}=W_{r-1,y}$ for some $y\in\Gamma_0.$  By Propositon \ref{filter2}(b), $W_{r-1,x}K$ and $W_{r-1,y}K$ belong to $\mathfrak M(c).$ Since $\rho(W_{c\alpha})c\alpha=\rho(W_{r-1,y}K)\alpha<0$, Theorem \ref{cprepr2}(a) says that $W_{r,x}\preceq W_{r-1,y}K.$ By Proposition \ref{filter2}(b),  $x\in\langle y\rangle$ and $W_{r,x}\preceq W_{r-1,x}K.$ Therefore $y\le x$ and  $W_{r-1,x}K= UW_{r,x}$ where  $x\not\in\mathrm{Supp}\,U$.  By Remark \ref{cprepr2.5}, $W_{r-1,x}K\in\mathfrak N(\alpha)$ whence $W_{r-1,x}\in\mathfrak N(c\alpha)$ so that  $W_{r-1,y}\preceq W_{r-1,x}$.  By Proposition \ref{filter2}(b), $x\le y$.  Thus $y=x$ and $W_{c\alpha}=W_{r-1,x}= X_r\dots X_2=W_{\rho(X_1)\alpha}$ in view of Remark \ref{cprepr1.3}(b). Since $W_{c\alpha}= W_{\rho(X_1)\alpha},$ then $c\alpha=\rho(X_1)\alpha$ by Theorem \ref{cprepr2}(d).  We have proved that the statement holds for $i=1.$

The inductive hypothesis says that  if $0<j<r-1,$ then $$c^j(c\alpha)=c^j\left[\rho(X_1)\alpha\right]=\rho\left(X_{j+1}\dots X_2\right)\left[\rho(X_1)\alpha\right]$$ and $X_r\dots X_{j+2}$ is the canonical form of $W_{c^j\left[\rho\left(X_1\right)\alpha\right]}=W_{c^{j+1}\alpha}$  Setting $i=j+1,$ we see that the statement holds if $1<i<r.$\vskip.03in

(c) By (a) and (b), $c^{r-1}\alpha>0$  and $W_{c^{r-1}\alpha}= X_r=W_{1,x}.$ Since $W_{\pi_s(c)}=W_{1,x}$ by Theorem \ref{cproj}(a), then  $W_{c^{r-1}\alpha}= W_{\pi_s(c)}$ whence $c^{r-1}\alpha=\pi_s(c)$ by Theorem \ref{cprepr2}(d).  Therefore $\alpha=c^{-r+1}\pi_s(c).$ If $0< i< r,$ then $c^{-i}\pi_s(c)=c^{-i+r-1}\alpha>0$ by (a).
\end{proof}

We show how to construct the $c$-preprojective roots of size $r.$

\begin{prop}\label{cprepr3.1}  Let $r>0$ be an integer and let $s=\rho(x),\,x\in\Gamma_0.$ If $\rho\left(W_{i,x}^T\right)(-\alpha_{s})>0$ when $1<i\le r,$ then $\alpha=\rho\left(W_{r,x}^T\right)(-\alpha_{s})\in\mathcal P(c,r)$ and $W_\alpha= W_{r,x}$.
\end{prop}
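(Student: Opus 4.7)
The plan is to induct on $r$. The base case $r=1$ is immediate from Theorem~\ref{cproj}(a): the hypothesis is vacuous, and $\rho(W_{1,x}^T)(-\alpha_s)$ is exactly $\pi_s(c)$, with $W_{\pi_s(c)} = W_{1,x}$.

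For $r \ge 2$, write $W_{r,x} = X_r \cdots X_1$ in canonical form, so that $X_r \cdots X_2$ is the canonical form of $W_{r-1,x}$ and $W_{r,x} = W_{r-1,x}X_1$. Set $\beta = \rho(W_{r-1,x}^T)(-\alpha_s)$, so $\alpha = \rho(X_1^T)\beta$. The hypothesis for $r$ contains the hypothesis for $r-1$, so the inductive hypothesis yields $\beta \in \mathcal P(c,r-1)$ with $W_\beta = W_{r-1,x}$; in particular $\beta>0$. The heart of the proof is the identity $c\alpha=\beta$. Using Proposition~\ref{filter}(b), factor $K = LX_1$ with $L$ multiplicity-free and $\mathrm{Supp}\,L = \Gamma_0 \setminus \mathrm{Supp}\,X_1$; then $c = \rho(L)\rho(X_1)$ gives $c\alpha = \rho(L)\beta$, so it remains to check that $\rho(L)$ fixes $\beta$. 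Tracking supports: starting from $-\alpha_s$ with support $\{x\} \subset \mathrm{Supp}\,X_2$, each simple reflection appearing in $W_{r-1,x}^T$ can only enlarge the support by a vertex of $\mathrm{Supp}\,W_{r-1,x} = \mathrm{Supp}\,X_2$ (the canonical-form supports being nested), so $\mathrm{Supp}\,\beta \subset \mathrm{Supp}\,X_2$. By Definition~\ref{filter1}, $\mathrm{Supp}\,X_1 = H_\La(\mathrm{Supp}\,X_2)$ contains every neighbor of $\mathrm{Supp}\,X_2$, so $\mathrm{Supp}\,L$ is disjoint from and nonadjacent to $\mathrm{Supp}\,\beta$; hence each $\rho(v)$, $v \in \mathrm{Supp}\,L$, fixes $\beta$, and so does $\rho(L)$.

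With $c\alpha = \beta$ in hand, Proposition~\ref{cprepr3}(a) applied to $\beta$ gives $c^i\beta > 0$ for $0 \le i < r-1$ and $c^{r-1}\beta < 0$; combined with $\alpha > 0$ (the hypothesis at $i=r$), this yields $c^j\alpha > 0$ for $0 \le j < r$ and $c^r\alpha < 0$. So $\alpha \in \mathcal P(c)$, and Proposition~\ref{cprepr3}(a) forces $W_\alpha = W_{r,y}$ for some $y \in \Gamma_0$. To identify $y = x$, apply Proposition~\ref{cprepr3}(c) to both $\alpha$ and $\beta$: $\alpha = c^{-r+1}\pi_{\rho(y)}(c)$ and $\beta = c^{-r+2}\pi_s(c)$; but $\beta = c\alpha = c^{-r+2}\pi_{\rho(y)}(c)$, so $\pi_{\rho(y)}(c) = \pi_s(c)$, and the injectivity of $s \mapsto \pi_s(c)$ from Theorem~\ref{cproj}(a) forces $y = x$. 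The main obstacle is the non-formal identity $c\alpha = \beta$, which rests on the $H_\La$-hull absorbing all neighbors of $\mathrm{Supp}\,X_2$; once it is in place, Proposition~\ref{cprepr3} bridges the $c^i$-description and the canonical-form description cleanly.
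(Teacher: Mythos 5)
Your proof is correct, and it takes a genuinely different route from the paper's. The paper argues directly, with no induction: from $\rho(W_{r,x})\alpha=-\alpha_s$ it gets $\alpha\in\mathcal P(c)$ at once, writes $W_\alpha=W_{q,y}\preceq W_{r,x}$, sets $Y=X_q\dots X_1$ in the canonical form, uses Remark \ref{cprepr2.5} to get $\rho(Y)\alpha<0$, and observes that cancellation gives $\rho(Y)\alpha=\rho\left(W_{r-q,x}^T\right)(-\alpha_s)$, which the positivity hypothesis forbids unless $q=r$; a one-line computation with $-\alpha_s$ then forces $y=x$. You instead induct on $r$, and your key step is the identity $c\alpha=\beta$ for $\beta=\rho\left(W_{r-1,x}^T\right)(-\alpha_s)$, proved by tracking supports ($\mathrm{Supp}\,\beta\subset\mathrm{Supp}\,X_2$) and using the fact that the hull $\mathrm{Supp}\,X_1=H_{\La}(\mathrm{Supp}\,X_2)$ absorbs every neighbour of $\mathrm{Supp}\,X_2$; you then route the conclusion through the $c$-power characterization of Proposition \ref{cprepr3}(a) and the formula $\alpha=c^{-r+1}\pi_s(c)$ of Proposition \ref{cprepr3}(c). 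I checked that this creates no circularity: Proposition \ref{cprepr3} and Theorem \ref{cproj} are proved without appeal to the present statement. What your route buys is a transparent, self-contained proof that $c\alpha=\rho(X_1)\alpha$ for the candidate root (essentially the $i=1$ case of Proposition \ref{cprepr3}(b), but obtained before knowing $W_\alpha=W_{r,x}$), making explicit why the hull condition in Definition \ref{filter1} matters; the paper's route is shorter and stays entirely inside the word combinatorics of $\mathfrak M(c)$, never invoking powers of $c$. Two small points you should make explicit: the factorization $K=LX_1$ via Proposition \ref{filter}(b) first requires noting $X_1\in\mathfrak M(c)$ (Remark \ref{cprepr.1}(a)), and the step ``$W_\alpha=W_{r,y}$'' should formally pass through Theorem \ref{cprepr2}(b) before Proposition \ref{cprepr3}(a) is applied.
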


\begin{proof}  It is straightforward that  $\rho(W_{r,x})\alpha=-\alpha_s,$ so $\alpha\in\mathcal P(c)$.  We have to prove that $W_\alpha= W_{r,x}.$ Let $X_r\dots X_1$ be the canonical form of $W_{r,x}.$ 

By Theorem \ref{cprepr2}(b), $W_\alpha= W_{q,y}$ for some $q>0,\, y\in\Gamma_0.$  Setting $t=\rho(y),$   we note that $W_{q,y}\preceq W_{r,x}$ by Theorem \ref{cprepr2}(a), so Proposition \ref{filter2}(b) says that $q\le r$ and $y\in\mathrm{Supp}\, X_q.$ Set $Y=X_q\dots X_1.$ Then $W_{q,y}\preceq Y$  so that $Y= UW_{q,y},$ where $y\not\in\mathrm{Supp}\,U$ because each $X_j$ is multiplicity-free. By Remark \ref{cprepr2.5}, 
$$\rho(Y)\alpha=\left[\rho(X_q\dots X_1)\rho(X_1^T\dots X_r^T)\right](-\alpha_{s})<0$$ whence $q\ge r$ because, by assumption, 
$$\rho(X_{q+1}^T\dots X_r^T)(-\alpha_{s})=\rho\left(W_{r-q,x}^T\right)(-\alpha_{s})>0$$ if $0< q<r.$ Therefore $q=r$ so that $Y=W_{r,x}.$ In view of Theorem \ref{cprepr2}(c),
$$-\alpha_s=\rho\left(W_{r,x}\right)\alpha=\rho(U)\left[\rho\left(W_{r,y}\right)\alpha\right]=\rho(U)(-\alpha_t).$$
 Since $y\not\in\mathrm{Supp}\,U,$ we must have $t=s$ whence $y=x.$ Thus $W_\alpha= W_{r,x}.$
\end{proof}

\begin{thm}\label{cprepr3.3}  The following are equivalent for a root $\alpha>0$ and an integer $r>0$.
\begin{itemize}
\item[(a)]  $\alpha\in\mathcal P(c,r)$.
\item[(b)] $c^i\alpha>0$ whenever $0<i<r,$ and $c^r\alpha<0.$ 
\item[(c)]  There is an $s\in S$ for which $c^{-i}\pi_{s}(c)>0$ if $0< i< r,$ and $\alpha=c^{-r+1}\pi_{s}(c);$ here $\pi_{s}(c)$ is the $c$-projective root associated with $s\in S.$
\item[(d)]  There is an $x\in \Gamma_0$ for which $\rho\left(W_{i,x}^T\right)(-\alpha_{s})>0$ if $1< i\le r,$ and $\alpha=\rho\left(W_{r,x}^T\right)(-\alpha_{s}),$ where $s=\rho(x).$
\end{itemize}
If $\alpha$ satisfies any of the conditions (a)-(d), the elements $s$ in (c) and $x$ in (d) are uniquely determined.
\end{thm}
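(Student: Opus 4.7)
My plan is to close the equivalence by first proving (a) implies each of (b), (c), (d) using material already in the paper, then establishing the three converses (b)$\Rightarrow$(a), (c)$\Rightarrow$(a), (d)$\Rightarrow$(a). Every step boils down to invoking Theorems \ref{cprepr2} and \ref{cproj} and Propositions \ref{cprepr3} and \ref{cprepr3.1}, so the challenge is not mathematical depth but choosing the right citation at each juncture.

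For the forward direction, (a)$\Rightarrow$(b) is exactly Proposition \ref{cprepr3}(a); (a)$\Rightarrow$(c) is Proposition \ref{cprepr3}(c), with $s=\rho(x)$ where $W_\alpha=W_{r,x}$ by Theorem \ref{cprepr2}(b); and (a)$\Rightarrow$(d) follows by letting $X_r\dots X_1$ be the canonical form of $W_\alpha=W_{r,x}$, taking $s=\rho(x)$, and combining the $W_{i,x}\prec W_{r,x}$ strict chain (Proposition \ref{filter2}(b)) with the equalities $\alpha=\rho(W_{r,x}^T)(-\alpha_s)$ and the positivity statements on $\rho(W_{i,x}^T)(-\alpha_s)$ for $1<i<r$, both contained in Theorem \ref{cprepr2}(c). (For $i=r$, the value is $\alpha$ itself, hence positive.)

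For the converses, (d)$\Rightarrow$(a) is Proposition \ref{cprepr3.1} verbatim. For (b)$\Rightarrow$(a), observe that $c^r\alpha=\rho(K^r)\alpha<0$ with $K^r\in\mathfrak M(c)$ (Remark \ref{cprepr1.11}(a)) already certifies $\alpha\in\mathcal P(c)$, so $\alpha\in\mathcal P(c,q)$ for a unique $q>0$ by Theorem \ref{cprepr2}(b) and Definition \ref{size}. Applying the already-proved (a)$\Rightarrow$(b) to this $q$ gives $c^i\alpha>0$ for $0<i<q$ and $c^q\alpha<0$; comparing with the hypothesis (b) forces $q=r$. For (c)$\Rightarrow$(a), write $c^{r-1}\alpha=\pi_s(c)$; since $\pi_s(c)\in\mathcal P(c,1)$ and (a)$\Rightarrow$(b) is established, $c\pi_s(c)<0$, i.e.\ $c^r\alpha<0$. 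For $0<i<r$, $c^i\alpha=c^{-(r-1-i)}\pi_s(c)>0$ by hypothesis (when $r-1-i>0$) or because $\pi_s(c)>0$ (when $i=r-1$). Thus (b) holds and (a) follows from (b)$\Rightarrow$(a).

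Finally, uniqueness of $s$ in (c) is forced by $\pi_s(c)=c^{r-1}\alpha$, which is intrinsic to $\alpha$, combined with the bijectivity of $s\mapsto\pi_s(c)$ from Theorem \ref{cproj}(a). Uniqueness of $x$ in (d) follows by observing that (d) implies (a) and hence $W_\alpha=W_{r,x}$, so $x$ is determined by Theorem \ref{cprepr2}(b) and Proposition \ref{filter2}(c). The only place where I anticipate needing care is the boundary cases in the argument for (c)$\Rightarrow$(a) (particularly the edge case $i=r-1$, where the exponent of $c^{-1}$ becomes zero); everything else is an orderly bookkeeping exercise against the prior machinery.
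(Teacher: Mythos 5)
Your proposal is correct and follows essentially the same route as the paper: the forward implications are read off from Proposition \ref{cprepr3} and Theorem \ref{cprepr2}(b)--(c), the converse (d)$\Rightarrow$(a) is Proposition \ref{cprepr3.1}, and the remaining converses rest on the same "land in $\mathcal P(c,q)$ and compare $q$ with $r$ via Proposition \ref{cprepr3}(a)" device that the paper uses for its (b)$\Rightarrow$(d) step. The only difference is bookkeeping: you close the cycle through (b)$\Rightarrow$(a) and (c)$\Rightarrow$(b)$\Rightarrow$(a) while the paper uses (c)$\Rightarrow$(b)$\Rightarrow$(d)$\Rightarrow$(a), which is immaterial.
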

\begin{proof} (a)$\implies$(b)  and (a)$\implies$(c)  These are parts (a) and (c) of Proposition \ref{cprepr3}. 

(d)$\implies$(a)  This is Proposition \ref{cprepr3.1}.

(c)$\implies$(b) This  is an immediate consequence of Theorem \ref{cproj}(c).

(b)$\implies$(d) By definition, $\alpha\in\mathcal P(c)$ so that Theorem \ref{cprepr2}(b) says that $W_\alpha=W_{q,x},$ for some $q>0,x\in\Gamma_0.$ By Theorem \ref{cprepr2}(c), $\rho(W_{i,x}^T)(-\alpha_{s})>0$ if $0<i\le q,$ and $\alpha=\rho(W_{q,x}^T)(-\alpha_{s}).$  By Proposition \ref{cprepr3}(a), $c^i\alpha>0$ if $0<i<q,$ and $c^q\alpha<0.$  Comparing the latter with the assumption, we get $q=r.$ 

The uniqueness is an immediate consequence of our prior results.
\end{proof}

The final statement of  this section follows immediately from Theorems \ref{cprepr3.3} and \ref{cproj}(c).

\begin{cor}\label{cprepr3.5}
\begin{itemize}
\item[(a)]  A root $\alpha>0$ is $c$-preprojective if and only if $c^r\alpha<0$, for some integer $r>0$.  The root $\alpha$ is $c$-projective if and only if $c\alpha<0$.
\item[(b)] For all $r>0,$ 
\newline$\mathcal P(c,r)=\{\rho\left(W_{r,x}^T\right)(-\alpha_{\rho(x)})\,|\, x\in\Gamma_0,\, \rho\left(W_{i,x}^T\right)(-\alpha_{\rho(x)})>0\ \mathrm{if}\  1< i\le r\}\newline\phantom{xxxxx}=\{c^{-r+1}\pi_{\rho(x)}\,|\, x\in\Gamma_0,\ c^{-i}\pi_{\rho(x)}>0\ \mathrm{if}\ 0< i< r\}.$\end{itemize}
\end{cor}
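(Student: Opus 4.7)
The plan is to read both statements off of Theorem \ref{cprepr3.3}, whose equivalences (a)$\Leftrightarrow$(b)$\Leftrightarrow$(c)$\Leftrightarrow$(d) already encode the desired characterizations. No new argument is required; the only care is to handle the existential quantifier ``for some $r>0$'' in part (a) by choosing the minimal such $r$, and to recognize that the characterization of $c$-projective roots is the $r=1$ case.

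For part (a), if $\alpha>0$ is $c$-preprojective, then by Definition \ref{cprepr1.4} and Theorem \ref{cprepr2}(b) there exists a unique $r>0$ with $\alpha\in\mathcal P(c,r)$, and Theorem \ref{cprepr3.3}, (a)$\Rightarrow$(b), yields $c^r\alpha<0$. Conversely, if $c^r\alpha<0$ for some $r>0$, I would let $r_0$ be the least such integer; then $c^i\alpha>0$ for $0<i<r_0$, so condition (b) of Theorem \ref{cprepr3.3} holds and gives $\alpha\in\mathcal P(c,r_0)\subset\mathcal P(c)$. The characterization of $c$-projective roots then follows by specializing to $r=1$, since $\mathcal P(c,1)$ is by definition the set of $c$-projective roots and the hypothesis ``$c^i\alpha>0$ for $0<i<1$'' is vacuous, so (a)$\Leftrightarrow$(b) of Theorem \ref{cprepr3.3} collapses to ``$\alpha$ is $c$-projective iff $c\alpha<0$''.

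Part (b) is a direct transcription: the first description of $\mathcal P(c,r)$ is the equivalence (a)$\Leftrightarrow$(d) of Theorem \ref{cprepr3.3}, and the second is the equivalence (a)$\Leftrightarrow$(c), where $\pi_{\rho(x)}$ abbreviates $\pi_{\rho(x)}(c)$ in the notation of Definition \ref{cproj1}. The uniqueness clause at the end of Theorem \ref{cprepr3.3} ensures that each element of $\mathcal P(c,r)$ appears exactly once in either parametrization. Theorem \ref{cproj}(c) enters implicitly, having already been used in the proof of (c)$\Rightarrow$(b) of Theorem \ref{cprepr3.3}, which is needed for the positivity statements about $c^{-i}\pi_{\rho(x)}$. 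Hence there is no genuine obstacle; the work is essentially bookkeeping, together with the disjointness $\mathcal P(c)=\bigcup_{r>0}\mathcal P(c,r)$ implicit in Theorem \ref{cprepr2}(b), which guarantees that the ``some $r$'' of part (a) is consistent with the ``for each $r$'' stratification of part (b).
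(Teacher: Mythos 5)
Your proposal is correct and matches the paper's approach: the paper derives the corollary immediately from Theorem \ref{cprepr3.3} (together with Theorem \ref{cproj}(c)), exactly as you do. Your extra care with the existential quantifier (passing to the minimal $r$) and the $r=1$ specialization is just the bookkeeping the paper leaves implicit.
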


\section{Preprojective roots and finite Coxeter groups}

\begin{thm}\label{cprepr5} For  a Coxeter element $c\in\mathcal W$,  the following are equivalent.
\begin{itemize}
\item[(a)]  The group $\mathcal W$ is finite.
\item[(b)]  The set $\mathcal P(c)$ is finite.
\item[(c)] All positive roots are $c$-preprojective.
\item[(d)] All simple roots are $c$-preprojective.
\end{itemize}
\end{thm}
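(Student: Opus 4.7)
The plan is to close the cycle $(c)\Rightarrow(d)\Rightarrow(a)\Rightarrow(c)$ and to prove the separate equivalence $(a)\Leftrightarrow(b).$ Two of the arrows are immediate: $(c)\Rightarrow(d)$ because simple roots are positive, and $(a)\Rightarrow(b)$ because $\mathcal P(c)\subseteq\Phi^+$ and $\Phi^+$ is finite as soon as $\mathcal W$ is.

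For $(d)\Rightarrow(a),$ I would set $\Psi=\{\alpha_s\mid s\in S\}\in\mathfrak f(c),$ which is a finite subset of $\mathcal P(c)$ by hypothesis, and apply Theorems \ref{lattice1} and \ref{cprepr2}(a) to produce $W_\Psi\in\mathfrak N(\Psi)\cap\mathfrak M(c).$ The element $w=\rho(W_\Psi)\in\mathcal W$ then satisfies $w\alpha_s<0$ for every $s\in S.$ Since every positive root $\alpha$ is a nonnegative linear combination of simple roots, and each $w\alpha_s$ has all coordinates $\le 0,$ the vector $w\alpha$ has all coordinates $\le 0;$ being a nonzero root, $w\alpha$ must itself be negative. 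Hence $w\Phi^+\subseteq\Phi^-,$ and the standard inversion-count identity $\ell(w)=|\{\alpha\in\Phi^+\mid w\alpha<0\}|$ forces $|\Phi^+|=\ell(w)<\infty,$ so $\mathcal W$ is finite.

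For $(a)\Rightarrow(c),$ fix $\alpha\in\Phi^+$ and let $h$ be the order of $c$ in $\mathcal W;$ by Corollary \ref{cprepr3.5}(a) it suffices to find $r>0$ with $c^r\alpha<0.$ The vector $\sigma=\sum_{i=0}^{h-1}c^i\alpha$ is $c$-invariant, but the standard spectral fact that in an irreducible finite Coxeter group the Coxeter element has no nonzero fixed vector on $\mathbb V$ (its eigenvalues being nontrivial $h$-th roots of unity) forces $\sigma=0,$ which is incompatible with every $c^i\alpha$ being positive.

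For $(b)\Rightarrow(a)$ I would argue the contrapositive: if $\mathcal W$ is infinite, Speyer's theorem \cite{spey08} (refining \cite{kp07}) says that every power $c^t$ is reduced, so $\ell(c^t)=tn.$ By Remark \ref{cprepr1.11}(a), $K^t\in\mathfrak M(c)$ with $\rho(K^t)=c^t,$ so every element of the inversion set of $c^t$ lies in $\mathcal P(c);$ since that inversion set has cardinality $tn$ for every $t,$ the set $\mathcal P(c)$ is infinite, contradicting $(b).$ The main obstacles are the two classical external inputs: the characterization of finiteness via the existence of an element of $\mathcal W$ negating every simple root, used in $(d)\Rightarrow(a),$ and the vanishing of the $c$-fixed subspace in the irreducible finite case, used in $(a)\Rightarrow(c);$ everything else is formal manipulation inside the lattice framework of Section 2 combined with Speyer's reducedness theorem.
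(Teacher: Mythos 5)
Your proof is correct, and two of its four implications take a genuinely different route from the paper's. For (d)$\implies$(a) you and the paper coincide: form $W_\Psi$ for $\Psi=\{\alpha_s\mid s\in S\}$ via Theorem \ref{cprepr2}(a), note that $\rho(W_\Psi)$ negates every simple and hence every positive root, and conclude finiteness (you make explicit the inversion-count step $\ell(w)=|\{\alpha\in\Phi^+\mid w\alpha<0\}|$ that the paper leaves as ``therefore $\mathcal W$ must be finite''). For (a)$\implies$(c) the paper instead cites Speyer's Theorem 3 to get that the longest element $w_0$ is $c$-admissible and negates all of $\Phi^+$; your spectral argument --- no nonzero $c$-fixed vector in the irreducible finite case, so $\sum_{i=0}^{h-1}c^i\alpha=0$, so some $c^r\alpha<0$, then Corollary \ref{cprepr3.5}(a) --- replaces that input with the classical theory of exponents of a Coxeter element, and is legitimate since Corollary \ref{cprepr3.5} is established before this theorem. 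For (b)$\implies$(a) the paper argues directly that $\ell(c^t)\le|\mathcal P(c)|$ for all $t>0$, so $c$ has finite order, and then invokes Howlett's theorem to conclude $\mathcal W$ is finite; you argue the contrapositive via Speyer's reducedness theorem ($\ell(c^t)=tn$ when $\mathcal W$ is infinite forces $\mathcal P(c)$ to be infinite). Both are sound: the paper's (a)$\implies$(c) is a one-liner at the cost of Speyer's deep theorem, while yours is self-contained modulo classical finite Coxeter theory; conversely, your (b)$\implies$(a) leans on Speyer exactly where the paper uses the older and more elementary Howlett result.
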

\begin{proof}  (a)$\implies$(b) and (c)$\implies$(d) are trivial.

(b)$\implies$(a)  By ~\cite[Proposition 5.6(b)]{humph90}, the length of the element $c^t,$ for some $t>0,$  is the number of positive roots $\alpha$ satisfying $c^t\alpha<0.$  Since a power of $c$ is $c$-admissible by Remark \ref{cprepr1.11}(a), each such $\alpha$ belongs to $\mathcal P(c)$  by Remark \ref{cprepr1.9}. Since $\mathcal P(c)$ is a finite set,  the lengths of the powers $c^t,\ t>0$, are bounded by $|\mathcal P(c)|.$ Hence there are only finitely many distinct elements among such powers, so that the order of $c$ is finite. By ~\cite[Theorem 4.1]{h}, $\mathcal W$ is finite.

(a)$\implies$(c) The unique element $w_0\in\mathcal W$ of maximal length satisfies $w_0\alpha<0,$ for all roots $\alpha>0,$  according to  ~\cite[Proposition 5.6 and Exercises 1, 2, p. 115]{humph90}. By ~\cite[Theorem 3]{spey08},  $w_0$ is $c$-admissible.  Hence all positive roots are $c$-preprojective.

(d)$\implies$(a)  By assumption, $\Psi=\{\alpha_s\,|\,s\in S\}\subset\mathcal P(c).$  By Theorem \ref{cprepr2}(a), the word $W_\Psi\in\mathfrak M(c)$ satisfies  $\rho(W_\Psi)\alpha_s<0,$   for all $s\in S,$ whence  $\rho(W_\Psi)\alpha<0$  for all $\alpha\in\Phi^+.$ Therefore the group $\mathcal W$ must be finite.
\end{proof}

\begin{prop}\label{cproj2}  For  a Coxeter element $c\in\mathcal W$,  the following are equivalent.
\begin{itemize}
\item[(a)] $|\mathcal W|=2.$
\item[(b)] All positive roots are $c$-projective.
\item[(c)] All simple roots are $c$-projective.
\end{itemize}
\end{prop}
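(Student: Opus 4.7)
My plan is to dispose of (a)$\implies$(b) and (b)$\implies$(c) quickly and direct the real work toward (c)$\implies$(a). For (a)$\implies$(b): if $|\mathcal W|=2$ then $|S|=n=1$, the Coxeter graph $\Gamma$ has a single vertex $x$ with $\rho(x)=s$, the Coxeter element is $c=s$, and the unique positive root $\alpha_s$ is negated by the multiplicity-free $c$-admissible element $x\in\mathfrak M(c)$. Implication (b)$\implies$(c) is trivial.

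For (c)$\implies$(a), I would fix $s\in S$, let $x\in\Gamma_0$ be the vertex with $\rho(x)=s$, and leverage Theorem \ref{cproj}. Because $\alpha_s$ is $c$-projective, Theorem \ref{cproj}(a) furnishes a unique $t\in S$ with $\alpha_s=\pi_t(c)$; writing $y\in\Gamma_0$ for the vertex satisfying $\rho(y)=t$, Theorem \ref{cproj}(b) gives the explicit expansion
\[
\alpha_s=\pi_t(c)=\sum_{z\in\Gamma_0,\,y\le z}\left(\sum_{p\colon y\to z}B(p)\right)\alpha_{\rho(z)}.
\]
Every coefficient is strictly positive, since $B(p)>0$ for every path $p$ and the trivial path $e_y$ contributes $B(e_y)=1$ to the $z=y$ summand; hence no cancellation is possible and the support of $\pi_t(c)$ is precisely $\rho(\langle y\rangle)$. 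Comparing this with the support $\{s\}$ of $\alpha_s$ and using the bijectivity of $\rho$, I conclude $\langle y\rangle=\{y\}$ and $\rho(y)=s$, so $y=x$ and $x$ is a sink of $(\Gamma,c)$.

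Applied to every $s\in S$, this conclusion makes every vertex of $\Gamma$ a sink in the acyclic quiver $(\Gamma,c)$. Since a sink has no outgoing arrows, $(\Gamma,c)$ carries no arrows at all and $\Gamma$ has no edges; the standing irreducibility assumption that $\Gamma$ is connected then forces $|\Gamma_0|=1$, whence $|S|=1$ and $|\mathcal W|=2$. The one delicate point, and the main obstacle I foresee, is exactly the strict positivity of each $B(p)$ (recorded just before Theorem \ref{cproj}): without it, cancellation could in principle shrink the support of $\pi_t(c)$ below $\rho(\langle y\rangle)$ and the support comparison driving the whole argument would collapse.
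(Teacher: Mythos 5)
Your proof is correct and follows essentially the same route as the paper: the easy implications are dispatched directly, and (c)$\implies$(a) is deduced from the explicit expansion of $\pi_s(c)$ in Theorem \ref{cproj}(b) together with the strict positivity of the coefficients $B(p)$, forcing every vertex to be a sink and hence $\Gamma_1=\emptyset$. Your version merely spells out in more detail the identification $\alpha_s=\pi_t(c)\implies t=s$ that the paper leaves implicit.
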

\begin{proof}  (a)$\implies$(b) and (b)$\implies$(c) are clear. 

(c)$\implies$(a) For each  $x\in\Gamma_0,$ set $s=\rho(x).$   Since $\alpha_s$ is $c$-projective, Theorem \ref{cproj}(b) says that $\pi_s(c)=\alpha_s,$ whence $e_x$ is the only path in $(\Gamma,c)$ starting at $x;$ in particular, no arrow starts at $x.$  Hence $\Gamma_1=\emptyset.$  Since we assume $\mathcal W$ irreducible, $\Gamma$ is connected and must consist of the single vertex $x.$ Thus $S=\{s\}$  and $|\mathcal W|=2.$
\end{proof}

\section{Reduced $c$-admissible words}

We quote ~\cite[formula (1.19), p. 26, and Definition 3.1.1(ii)]{bb2005}.
\begin{defn}\label{cprepr3.6} The set of {\em right associated reflections} for $w\in\mathcal W$ is ${T}_R(w)=\{t\in {T}\,|\, \ell(wt)<\ell(w)\}.$  For $u,v\in\mathcal W,$  set $u\le_L v$ if and only if $v=s_k s_{k-1}\dots s_1u$, for some $s_i\in {S}$ satisfying $\ell(s_i s_{i-1}\dots s_1u)=\ell(u)+i,\ 0\le i\le k$.  The binary relation $\le_L$ is the {\em left weak order} on $\mathcal W.$
 \end{defn}  

\begin{prop}\label{cprepr3.7} A root $\alpha>0$ is $c$-preprojective (resp. $c$-projective) if and only if $s_\alpha\in{T}_R(w),$ where $w=\rho(X)$ for some $X\in\mathfrak M(c)$ (resp. multiplicity-free $X\in\mathfrak M(c)$).
\end{prop}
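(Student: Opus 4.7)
The plan is to deduce the proposition as a direct translation: the condition ``$\rho(X)\alpha<0$'' appearing in Definition \ref{cprepr1.4} should be rewritten as the membership condition ``$s_\alpha\in T_R(\rho(X))$'' via a standard fact about associated reflections in general Coxeter groups, and then the two sides of the ``if and only if'' become literally the same statement.

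The one key input is the following well-known identity, which I would quote without reproof from \cite{humph90} or \cite{bb2005}: for every $w\in\mathcal{W}$ and every positive root $\alpha\in\Phi^+$,
\begin{equation*}
s_\alpha\in T_R(w)\quad\Longleftrightarrow\quad w(\alpha)<0.
\end{equation*}
This is the natural extension, via the strong exchange condition, of the prototypical fact $\ell(ws)<\ell(w)\iff w(\alpha_s)<0$ for a simple reflection $s$, and it amounts to the standard bijection $\beta\mapsto s_\beta$ between the inversion set $\{\beta\in\Phi^+\mid w\beta<0\}$ and $T_R(w)$.

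Granting this, the proof is then a one-line unraveling of definitions. By Definition \ref{cprepr1.4}, a positive root $\alpha$ is $c$-preprojective precisely when there exists some $X\in\mathfrak{M}(c)$ with $\rho(X)\alpha<0$; setting $w=\rho(X)$, the displayed equivalence rewrites this as $s_\alpha\in T_R(w)$ with $w=\rho(X)$ for some $X\in\mathfrak{M}(c)$, which is the right-hand side of the proposition. The $c$-projective case is handled identically, simply restricting $X$ to be multiplicity-free throughout, as in Definition \ref{cprepr1.4}.

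The only real obstacle is ensuring that the orientation conventions match: one must check that it is the \emph{right} associated reflection set $T_R(w)$ (and not $T_L(w)$) that corresponds to the condition $w\alpha<0$ for positive $\alpha$, and that the sign convention $\rho(X)\alpha<0$ used to define $c$-preprojective roots lines up accordingly. Both points are settled by the prototypical identity $\ell(ws)<\ell(w)\iff w(\alpha_s)<0$ recorded in \cite[Section 5.7]{humph90}. No combinatorics of $c$-admissible words, and none of the lattice-theoretic machinery of Section~2, is needed for this proposition; it is purely a restatement in the language of Definition \ref{cprepr3.6}.
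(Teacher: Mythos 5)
Your proposal is correct and matches the paper's proof exactly: the paper also reduces the statement to the identity $\ell(ws_\alpha)<\ell(w)\iff w(\alpha)<0$, quoted from \cite[Proposition 5.7]{humph90}, and then unwinds Definitions \ref{cprepr1.4} and \ref{cprepr3.6}. Nothing further is needed.
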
 
\begin{proof}  By ~\cite[Proposition 5.7]{humph90} $\ell(ws_\alpha)<\ell(w)$ if and only if $w(\alpha)<0.$
\end{proof}

We relate the partial order $\preceq$ on $\mathfrak M(c)$ to  the left weak order on the set of $c$-admissible elements of $\mathcal W.$

\begin{lem}\label{cprepr4}   Let  $X,Y\in\mathfrak M.$
\begin{itemize}
\item[(a)] If $X\preceq Y$ and the word $\rho(Y)$ is reduced, then the word $\rho(X)$ is reduced and $\rho(X)\underset L\le\rho(Y)$.
\item[(b)]  For any $\alpha\in\mathcal P(c)$ and $Y\in\mathfrak M(c),$ if  $\rho(W_\alpha)\underset L\le\rho(Y)$ then $W_\alpha\preceq Y$.
\end{itemize}
\end{lem}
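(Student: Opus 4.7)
For part (a), the plan is to exploit the subadditivity of length under $\rho:\mathfrak M\to\mathcal W$ combined with the reducedness of $\rho(Y)$. Writing $Y=UX$ we have $\ell(Y)=\ell(U)+\ell(X)$ and $\rho(Y)=\rho(U)\rho(X)$, and the chain
$$\ell(Y)=\ell(\rho(Y))\leq \ell(\rho(U))+\ell(\rho(X))\leq \ell(U)+\ell(X)=\ell(Y)$$
forces every inequality to be an equality, so both $\rho(U)$ and $\rho(X)$ are reduced and the product $\rho(U)\rho(X)$ is length-additive. Applying the same reasoning to each prefix $u_i\cdots u_1 X\preceq Y$, where $U=u_k\cdots u_1$, yields $\ell(\rho(u_i)\cdots\rho(u_1)\rho(X))=i+\ell(\rho(X))$ for all $0\leq i\leq k$, which by Definition \ref{cprepr3.6} is precisely $\rho(X)\leq_L \rho(Y)$.

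For (b), the plan is to prove $\rho(Y)\alpha<0$; once that is established, $Y\in\mathfrak N(\alpha)\cap\mathfrak M(c)$ and Theorem \ref{cprepr2}(a) gives $W_\alpha\preceq Y$ for free. Setting $W_\alpha=W_{r,x}$ and $s=\rho(x)$, the crucial input is Theorem \ref{cprepr2}(c), which delivers $\rho(W_\alpha)\alpha=-\alpha_s$; applying the conjugation rule $s_{w(\beta)}=ws_\beta w^{-1}$ with $w=\rho(W_\alpha)^{-1}$ and $\beta=-\alpha_s$ (noting $s_{-\alpha_s}=s$) yields $s_\alpha=\rho(W_\alpha)^{-1} s\,\rho(W_\alpha)$, i.e., the identity $\rho(W_\alpha)\,s_\alpha=s\,\rho(W_\alpha)$. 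Since $\rho(W_\alpha)\alpha<0$, the version of Humphreys' Proposition 5.7 quoted in Proposition \ref{cprepr3.7} gives $\ell(\rho(W_\alpha)s_\alpha)<\ell(\rho(W_\alpha))$; because $s\in S$ is a simple reflection, this forces $\ell(s\rho(W_\alpha))=\ell(\rho(W_\alpha))-1$.

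The hypothesis $\rho(W_\alpha)\leq_L\rho(Y)$ then produces $v\in\mathcal W$ with $\rho(Y)=v\rho(W_\alpha)$ and $\ell(\rho(Y))=\ell(v)+\ell(\rho(W_\alpha))$. Multiplying by $s_\alpha$ on the right and using the displayed identity gives
$$\ell(\rho(Y)s_\alpha)=\ell(vs\rho(W_\alpha))\leq \ell(v)+\ell(s\rho(W_\alpha))=\ell(\rho(Y))-1<\ell(\rho(Y)),$$
so a second application of Humphreys' 5.7 yields $\rho(Y)\alpha<0$, and Theorem \ref{cprepr2}(a) concludes. The main obstacle is the mismatch in level between hypothesis and conclusion: the hypothesis is group-theoretic (the left weak order on $\mathcal W$) while the conclusion is monoid-theoretic (the order $\preceq$ on $\mathfrak M(c)$). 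The bridge is Theorem \ref{cprepr2}(a), and the reason the argument succeeds is that $\rho(W_\alpha)$ sends $\alpha$ to precisely a \emph{negative simple} root, so that left multiplication by the corresponding simple reflection achieves exactly a unit drop in length.
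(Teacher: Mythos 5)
Your proof is correct. For part (a), which the paper dismisses as ``straightforward,'' you supply a complete argument, and it is the right one: writing $Y=UX$, the sandwich $\ell(Y)=\ell(\rho(Y))\le\ell(\rho(U))+\ell(\rho(X))\le\ell(U)+\ell(X)=\ell(Y)$ forces all the needed equalities, and applying it to each prefix $u_i\cdots u_1X\preceq Y$ verifies the length-additivity condition in Definition \ref{cprepr3.6}. For part (b), your skeleton coincides with the paper's --- both reduce the problem to showing $\rho(Y)\alpha<0$, deduce it from $\ell(\rho(Y)s_\alpha)<\ell(\rho(Y))$ via Humphreys' Proposition 5.7, and finish with Theorem \ref{cprepr2}(a) --- but the middle step differs: the paper quotes Bj\"orner--Brenti, Proposition 3.13, to get $T_R(\rho(W_\alpha))\subset T_R(\rho(Y))$, whereas you prove the needed instance of that containment by hand with a length estimate. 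Your computation is valid, though more elaborate than necessary: the conjugation identity $\rho(W_\alpha)s_\alpha=s\,\rho(W_\alpha)$ and the fact that $\rho(W_\alpha)\alpha$ is a \emph{negative simple} root are not actually needed, since from $\rho(Y)=v\,\rho(W_\alpha)$ with $\ell(\rho(Y))=\ell(v)+\ell(\rho(W_\alpha))$ one gets directly $\ell(\rho(Y)s_\alpha)\le\ell(v)+\ell(\rho(W_\alpha)s_\alpha)<\ell(v)+\ell(\rho(W_\alpha))=\ell(\rho(Y))$ for any reflection $s_\alpha$ with $\ell(\rho(W_\alpha)s_\alpha)<\ell(\rho(W_\alpha))$. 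What your self-contained version buys is independence from the cited combinatorial lemma; what it costs is an extra layer of identities that a one-line subadditivity estimate already covers.
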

\begin{proof} (a)  The proof is straightforward.\vskip.05in

(b) Since $\rho(W_\alpha)\underset L\le\rho(Y),$  ~\cite[Proposition 3.13]{bb2005} says that ${T}_R(\rho(W_\alpha))\subset{T}_R(\rho(Y))$.  Since $\rho(W_\alpha)\alpha<0$, ~\cite[Proposition 5.7]{humph90} says that $\ell(\rho\left(W_\alpha)s_{\alpha}\right)<\ell(\rho(W_\alpha))$ whence $s_{\alpha}\in{T}_R(\rho(W_\alpha)).$ Then $s_{\alpha}\in{T}_R(\rho(Y))$ whence $\ell(\rho(Y)s_{\alpha})<\ell(\rho(Y))$ so that $\rho(Y)\alpha<0$.  By Theorem \ref{cprepr2}(a),  $W_\alpha\preceq Y$.
\end{proof}

\begin{thm}\label{cprepr4.5} Let $\Psi=\{\alpha_1,\dots,\alpha_m\}$ be a finite subset of $\mathcal P(c).$
\begin{itemize}
\item[(a)] The word $\rho\left(W_\Psi\right)$ is reduced.  In particular, for all $\alpha\in\mathcal P(c),$ the word $\rho(W_\alpha)$ is reduced.
\item[(b)]  If $\alpha, \beta\in\mathcal P(c),$ then $W_\alpha\preceq W_\beta$ if and only if $\rho(W_\alpha)\underset L\le\rho(W_\beta).$
\item[(c)]  Assume $W_\Psi= W_{\alpha_1}\vee\dots\vee W_{\alpha_m}.$ For all $Y\in\mathfrak M(c),$  if $\rho(W_\Psi)\underset L\le\rho(Y)$ then $W_\Psi\preceq Y$.
\item[(d)] If $\Psi,\Theta\in\mathfrak i(c),$ then $W_\Psi\preceq W_\Theta$ if and only if $\rho(W_\Psi)\underset L\le\rho(W_\Theta).$
\end{itemize}
\end{thm}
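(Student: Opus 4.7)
The heart of the theorem is part (a); parts (b), (c), (d) then follow from (a) together with Lemma \ref{cprepr4}, Theorem \ref{cprepr2}(a), and Proposition \ref{independent1}(a).

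For (a), I argue by contradiction. Suppose $\rho(W_\Psi)$ is not reduced, and write $W_\Psi = x_l \cdots x_1$. Let $i$ be the smallest index such that $\rho(x_i) \rho(x_{i-1}) \cdots \rho(x_1)$ fails to have $\mathcal{W}$-length one greater than $\rho(x_{i-1}) \cdots \rho(x_1)$. By the standard length criterion $\ell(sw) < \ell(w) \iff w^{-1}\alpha_s < 0$, this forces $\rho(x_1) \cdots \rho(x_{i-1})\alpha_{\rho(x_i)} < 0$; equivalently, the reversed prefix $P^T = x_1 x_2 \cdots x_{i-1}$ negates $\alpha_{\rho(x_i)}$. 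Using Remark \ref{cprepr1.11}(b), $P^T$ belongs to $\mathfrak{M}(c')$ for a Coxeter element $c'$ conjugate to $c^{-1}$ via $\rho(P)$, so $\alpha_{\rho(x_i)} \in \mathcal{P}(c')$ and Remark \ref{cprepr1.3}(a) forces $x_i$ to appear already among $x_1, \dots, x_{i-1}$; take $h < i$ minimal with $x_h = x_i$. Remark \ref{cprepr.1}(b) then yields that $x_h$ commutes in $\mathfrak{M}$ with $x_1, \dots, x_{h-1}$, and the two flips at vertex $x_h = x_i$ (at positions $h$ and $i$) cancel in the orientation sequence. The plan is to exploit this cancellation to produce $Y \in \mathfrak{M}(c)$ with $Y \prec W_\Psi$ and $\rho(Y) = \rho(W_\Psi)$. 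Then $Y \in \mathfrak{N}(\Psi) \cap \mathfrak{M}(c)$ with $Y \prec W_\Psi$, contradicting the minimality of $W_\Psi$ established in Theorem \ref{cprepr2}(a).

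Parts (b)--(d) then follow by now-routine arguments. For (b), the forward direction $W_\alpha \preceq W_\beta \Rightarrow \rho(W_\alpha) \le_L \rho(W_\beta)$ is Lemma \ref{cprepr4}(a) once (a) supplies the reducedness of $\rho(W_\beta)$, and the reverse is Lemma \ref{cprepr4}(b). For (c), I generalize the proof of Lemma \ref{cprepr4}(b) to finite $\Psi$: $\rho(W_\Psi) \le_L \rho(Y)$ yields $T_R(\rho(W_\Psi)) \subseteq T_R(\rho(Y))$ by \cite[Proposition 3.13]{bb2005}, and since $\rho(W_\Psi)\alpha_i < 0$ for each $\alpha_i \in \Psi$, each reflection $s_{\alpha_i}$ lies in $T_R(\rho(W_\Psi)) \subseteq T_R(\rho(Y))$, giving $\rho(Y)\alpha_i < 0$ for all $i$; thus $Y \in \mathfrak{N}(\Psi) \cap \mathfrak{M}(c)$ and Theorem \ref{cprepr2}(a) gives $W_\Psi \preceq Y$. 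For (d), the forward direction is again Lemma \ref{cprepr4}(a) using (a) applied to $\rho(W_\Theta)$, while for the reverse, independence of $\Psi$ gives $W_\Psi = W_{\alpha_1} \vee \dots \vee W_{\alpha_m}$ by Proposition \ref{independent1}(a), placing it under the hypothesis of (c).

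The main obstacle is the deletion step in (a): verifying that removing both $x_h$ and $x_i$ from $W_\Psi$ actually yields a $c$-admissible word. Intermediate letters $x_{h+1}, \dots, x_{i-1}$ adjacent in $\Gamma$ to the vertex $x_h$ may fail to be sinks at their turn once the flip at $x_h$ is suppressed, so a naive deletion breaks admissibility. Overcoming this requires either a more delicate choice of which pair of letters to delete, or iterated application of Remark \ref{cprepr.1}(b) to propagate the commutation through the intermediate positions while carefully tracking how $c$-admissibility behaves under the two cancelling flips at the same vertex.
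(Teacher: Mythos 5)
Your reductions of (b), (c), and (d) to part (a) are sound; indeed your proof of (c) is a mild streamlining of the paper's, passing directly through $T_R(\rho(W_\Psi))\subset T_R(\rho(Y))$ and the minimality of $W_\Psi$ in $\mathfrak N(\Psi)\cap\mathfrak M(c)$ rather than through the individual $W_{\alpha_i}$ and the join, and it does not even use the hypothesis $W_\Psi= W_{\alpha_1}\vee\dots\vee W_{\alpha_m}.$ The problem is (a), where the gap you yourself flag is not a technicality to be patched but the entire content of the statement, and your argument does not close it.

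The paper makes no attempt at a deletion argument. It quotes Speyer: if $\mathcal W$ is infinite, \cite[Theorem 2]{spey08} says $\rho(X)$ is reduced for \emph{every} $X\in\mathfrak M(c),$ in particular for $W_\Psi;$ if $\mathcal W$ is finite, \cite[Theorem 3]{spey08} supplies $U\in\mathfrak M(c)$ with $\rho(U)=w_0$ reduced, then $U\in\mathfrak N(\Psi)$ gives $W_\Psi\preceq U$ by Theorem \ref{cprepr2}(a), and Lemma \ref{cprepr4}(a) transports reducedness down from $U$ to $W_\Psi.$ What you propose to prove instead --- that a non-reduced $W_\Psi$ admits $Y\in\mathfrak M(c)$ with $Y\prec W_\Psi$ and $\rho(Y)=\rho(W_\Psi)$ --- is precisely the hard combinatorial core that \cite{spey08}, building on \cite{kp07}, was written to handle, and the obstacles are worse than the one you name. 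The exchange condition deletes a letter at a position of its own choosing, not necessarily another occurrence of the vertex $x_i,$ and the resulting subword of $x_l\dots x_1$ is in general neither a $\preceq$-predecessor of $W_\Psi$ (being a subword is much weaker than being a suffix up to commutations) nor $c$-admissible. Your appeal to Remark \ref{cprepr.1}(b) is also invalid as stated: that remark requires the repeated vertex to be a sink of the \emph{initial} orientation $(\Gamma,c),$ which $x_i$ need not be. Finally, note that when $\mathcal W$ is finite the words $K^t\in\mathfrak M(c)$ represent non-reduced elements $c^t$ for large $t,$ so no purely local ``two flips at the same vertex cancel'' mechanism can be the whole story; a correct completion must interleave the cancellation with the minimality of $W_\Psi$ in a way your sketch does not specify. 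As written, part (a) --- and with it the theorem --- is unproved.
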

\begin{proof} (a)  If $\mathcal W$ is infinite, the word $\rho(X)$ is reduced for all $X\in\mathfrak M(c)$ by ~\cite[Theorem 2]{spey08}.  Since $W_\Psi\in\mathfrak M(c),$ the statement holds.

Suppose $\mathcal W$ is finite.  By ~\cite[Theorem 3]{spey08},  the element $w_0$ of maximal length satisfies $w_0=\rho(U),$ where $U\in\mathfrak M(c)$ and the word $\rho(U)$ is reduced.   Since $U\in\mathfrak N(\Psi)$  then $W_\Psi\preceq U.$  By  Lemma \ref{cprepr4}(a), the word $\rho\left(W_{\Psi}\right)$ is reduced. \vskip.05in

(b) This follows from (a) and Lemma \ref{cprepr4}.

\vskip.05in(c) For each $i$ we have $W_{\alpha_i}\preceq W_\Psi,$ so Lemma \ref{cprepr4}(a) gives $\rho(W_{\alpha_i})\underset L\le\rho\left(W_\Psi\right)$ in view of (a).  Then $\rho(W_{\alpha_i})\underset L\le\rho(Y)$ by transitivity, so Lemma \ref{cprepr4}(b) gives $W_{\alpha_i}\preceq Y.$  Since $i$ is arbitrary, $W_\Psi\preceq Y$ by the property of join.

\vskip.05in(d)  The necessity follows from Lemma \ref{cprepr4}(a) because $\rho(W_\Theta)$ is reduced according to (a).  The sufficiency is a special case of (c).
\end{proof}

\begin{lem}\label{cprepr6}
Let  $X = x_l\dots x_1$ belong to ${\mathfrak P}(c),$ $l >
1,$ and set $Y = x_l \dots x_2$.
If $Y= W_\beta$ for some  $\beta\in\mathcal P({\rho({x_1})c\rho({x_1})})$  where $\beta\ne\alpha_{\rho(x_1)},$ then $\alpha=\rho({x_1})\beta\in\mathcal P(c)$ and $X=W_\alpha.$
\end{lem}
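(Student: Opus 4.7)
The plan is to verify $\alpha \in \Phi^+,$ then that $X$ itself negates $\alpha$ (which gives $\alpha \in \mathcal P(c)$ and $W_\alpha \preceq X$ by Theorem \ref{cprepr2}(a)), and finally the nontrivial claim $W_\alpha = X.$ Positivity is immediate from \cite[Proposition 5.6(a)]{humph90}: the only positive root negated by $\rho(x_1)$ is $\alpha_{\rho(x_1)},$ so $\alpha = \rho(x_1)\beta > 0.$ Negation is the direct computation $\rho(X)\alpha = \rho(Y)\rho(x_1)\alpha = \rho(Y)\beta < 0,$ using $Y = W_\beta.$

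To force $W_\alpha = X,$ I split on whether $x_1 \in \mathrm{Supp}\,W_\alpha.$ If $x_1 \in \mathrm{Supp}\,W_\alpha,$ Remark \ref{cprepr.1}(b) moves $x_1$ to the rightmost position, writing $W_\alpha = W''x_1$ with $W'' \in \mathfrak{M}(d)$ by Remark \ref{cprepr.1}(a), where $d = \rho(x_1)c\rho(x_1).$ Then $\rho(W'')\beta = \rho(W_\alpha)\alpha < 0,$ so $W'' \in \mathfrak{N}(\beta) \cap \mathfrak{M}(d),$ and Theorem \ref{cprepr2}(a) applied to $\beta$ and $d$ gives $Y = W_\beta \preceq W''.$ Comparing lengths, $\ell(W_\alpha) \ge l = \ell(X),$ which together with $W_\alpha \preceq X$ yields $W_\alpha = X.$

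If instead $x_1 \notin \mathrm{Supp}\,W_\alpha,$ I argue inductively along $W_\alpha = z_k \dots z_1$ that no $z_i$ is adjacent to $x_1$: at each step $x_1$ remains a sink of the current orientation (since none of $z_1,\dots,z_{i-1}$ is adjacent to or equal to $x_1$), so an adjacent $z_i$ could not itself be a sink. Hence $W_\alpha \in \mathfrak{M}(d)$ and $\rho(W_\alpha)$ commutes with $\rho(x_1),$ so $\rho(W_\alpha)\alpha = \rho(x_1)\rho(W_\alpha)\beta < 0$ leaves two sub-cases: either (2a) $\rho(W_\alpha)\beta < 0$ or (2b) $\rho(W_\alpha)\beta = \alpha_{\rho(x_1)}.$ In (2a), $Y \preceq W_\alpha \preceq X = Yx_1$ forces $W_\alpha \in \{X, Y\}$; the former contradicts $x_1 \notin \mathrm{Supp}\,W_\alpha,$ while the latter makes $\mathrm{Supp}\,X = \mathrm{Supp}\,Y \sqcup \{x_1\}$ with no edges between, contradicting the connectedness of $\mathrm{Supp}\,X$ guaranteed by $X \in \mathfrak{P}(c)$ (noted right after Definition \ref{filter1}). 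In (2b), $\rho(W_\alpha)\alpha = -\alpha_{\rho(x_1)}$ combined with Theorem \ref{cprepr2}(b),(c) forces $W_\alpha = W_{r, x_1}$ for some $r,$ whose leftmost letter is $x_1$ by Proposition \ref{filter2}(d), again contradicting $x_1 \notin \mathrm{Supp}\,W_\alpha.$

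The main obstacle is case (2a): ruling out $W_\alpha = Y$ rests on the connectedness-of-support property of $c$-principal words, which must be invoked at the right moment. Without that structural input from Definition \ref{filter1}, the argument has no handle on why a disjoint union $\mathrm{Supp}\,Y \sqcup \{x_1\}$ is incompatible with $X$ being $c$-principal; once that handle is in place, the rest of case 2 collapses cleanly, and case 1 follows from the standard sink-moving trick.
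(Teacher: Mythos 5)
Your proof is correct and follows essentially the same route as the paper's: positivity and negation are established identically, $W_\alpha\preceq X$ comes from Theorem \ref{cprepr2}(a), and the case split on whether $x_1\in\mathrm{Supp}\,W_\alpha$ matches the paper's, with case 1 resolved by the sink-moving trick and case 2 forcing $W_\alpha=Y$ and contradicting the connectedness of $\mathrm{Supp}\,X.$ Your explicit sub-case (2b), excluded via Theorem \ref{cprepr2}(b),(c) and Proposition \ref{filter2}(d), addresses a point the paper passes over silently (one needs $\rho(W_\alpha)\alpha\ne-\alpha_{\rho(x_1)}$ to conclude that $\rho(x_1)\left[\rho(W_\alpha)\alpha\right]<0$), so your treatment is if anything slightly more careful there.
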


\begin{proof} Remarks \ref{cprepr.1}(a) and \ref{cprepr1.11}(b) say that $Y\in\mathfrak M\left({\rho(x_1)}c\rho({x_1})\right).$  Since $\beta>0$ and $\beta\ne\alpha_{\rho(x_1)}$, ~\cite[Proposition 5.6(a)]{humph90} says that $\alpha>0$.  By Theorem \ref{cprepr2}(c) and Proposition \ref{filter2}(d), $\beta=\rho\left(Y^T\right)(-\alpha_{\rho(x_l)})$ whence $\rho(X)\alpha=\rho(Y)\beta=-\alpha_{\rho(x_l)}<0.$  Therefore $\alpha\in\mathcal P(c)$ and  Theorem \ref{cprepr2}(a) says that $W_\alpha\preceq X$.

If $x_1\in\mathrm{Supp}\, W_\alpha,$ then $x_1\preceq W_\alpha$ by   Remark  \ref{cprepr.1}(b) because $x_1$ is a sink in $(\Gamma,c).$ Then $W_\alpha= Ux_1,$ for
some  $U\in\mathfrak M\left({\rho(x_1)}c\rho({x_1})\right),$  whence
$\rho(U)\beta=\rho(U)[\rho(x_1)\alpha]=\rho(W_\alpha)\alpha<0.$ By Theorem \ref{cprepr2}(a),  $Y\preceq U$ whence $X=Yx_1\preceq Ux_1= W_\alpha.$  Thus $X= W_\alpha$ as claimed.

If $x_1\not\in\mathrm{Supp}\, W_\alpha,$ then $x_1\not\in\mathrm{Supp}\, \alpha$ by Remark \ref{cprepr1.3}(a), and no edge of $\Gamma$ joins $x_1$ with a vertex of $\mathrm{Supp}\, W_\alpha$ because $x_1$ is a sink, while $\mathrm{Supp}\, W_\alpha$  is a filter of $(\Gamma_0,c)$ by Proposition \ref{filter}(a).  Hence $W_\alpha x_1= x_1W_\alpha\in\mathfrak M(c) $ so that $W_\alpha\in \mathfrak M\left({\rho(x_1)}c\rho({x_1})\right),$ and $W_\alpha x_1\preceq X=Yx_1$ by Proposition \ref{filter}(b).  Therefore $W_\alpha\preceq Y,$ and we have 
$$\rho(W_\alpha)\beta=\rho(W_\alpha)\left[\rho(x_1)\alpha\right]=\rho(x_1)[\rho(W_\alpha)\alpha]<0,$$
whence $Y\preceq W_\alpha$ so that $Y= W_\alpha.$  Thus $x_1\not\in\mathrm{Supp}\, Y=\mathrm{Supp}\,W_\alpha,$  and no edge of $\Gamma$ joins $x_1$ with a vertex of $\mathrm{Supp}\, Y$  as we noted above. Since Supp$\, X=\mathrm{Supp}\, Y\cup \{x_1\},$ the full subgraph of $\Gamma$ determined by $\mathrm{Supp}\, X$ is disconnected, which contradicts the assumption that $X\in{\mathfrak P}(c)$.
\end{proof}

\begin{defn}\label{cprepr11} An element $w\in\mathcal W$ has a  {\em $c$-admissible} (resp. {\em $c$-principal}) {\em reduced expression}  if  $w=\rho(X),$ where the word $\rho(X)$ is reduced and $X\in\mathfrak M(c)$ (resp. $X\in\mathfrak P(c)$).  For any  $\mathfrak A\subset \mathfrak M,$ denote by  Red$\,\mathfrak A$ the set of elements $Z\in \mathfrak A$ for which the word $\rho(Z)$  is reduced.
\end{defn}

We now characterize the words in Red$\,{\mathfrak P}(c).$ Recall that a map $f:P\to Q$ of posets is {\em order-preserving} if $x\le y$ implies $f(x)\le f(y),$ and $f$ is {\em order-reflecting} if $f(x)\le f(y)$ implies $x\le y.$  A map is {\em order-embedding} if it is both order-preserving and order-reflecting.

\begin{thm}\label{cprepr7}  
\begin{itemize}
\item[(a)] For any $X\in \mathfrak P(c),$ the word $\rho(X)$ is reduced if and only if  $X= W_\alpha$  for some $\alpha\in\mathcal P(c).$ Hence $\mathrm{Red}\,\mathfrak P(c)$ is the image of the map $\nu:\mathcal P(c)\to\mathfrak P(c),$ and the map $\nu:\mathcal P(c)\to\mathrm{Red}\,\mathfrak P(c)$ is bijective.
\item[(b)]  The map $\rho:\mathrm{Red}\,\mathfrak P(c)\to\mathcal W$ is order-embedding, hence, injective.  
\item[(c)]  The maps of (a) and (b) are bijections between $\mathcal P(c),$ $\mathrm{Red}\,\mathfrak P(c),$ and the set of elements of $\mathcal W$ having a $c$-principal reduced expression. If $\mathcal W$ is finite, $\mathcal P(c)=\Phi^+.$
\end{itemize}

\end{thm}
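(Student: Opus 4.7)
The plan is to concentrate the real work in part (a); parts (b) and (c) will then follow by combining (a) with results already at hand.

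For the ``if'' direction of (a), Theorem \ref{cprepr4.5}(a) immediately gives that $\rho(W_\alpha)$ is reduced for every $\alpha\in\mathcal{P}(c)$. For the ``only if'' direction I will induct on $\ell(X)$. The base case $\ell(X)=1$ amounts to $X=x$ for a single vertex, and Theorem \ref{cproj}(a) identifies $X=W_{1,x}$ with $W_{\pi_{\rho(x)}(c)}$. For $\ell(X)>1$, writing $X=x_l\dots x_1$ and $Y=x_l\dots x_2$, the idea is to apply Lemma \ref{cprepr6}. This needs two ingredients: that $Y\in\mathfrak{P}(\rho(x_1)c\rho(x_1))$, so the inductive hypothesis applies in the shifted quiver and produces some $\beta\in\mathcal{P}(\rho(x_1)c\rho(x_1))$ with $Y=W_\beta$; and that $\beta\ne\alpha_{\rho(x_1)}$. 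The second ingredient is cheap: were $\beta=\alpha_{\rho(x_1)}$, then $\rho(Y)=\rho(W_\beta)$ would negate $\alpha_{\rho(x_1)}$, giving $\ell(\rho(Y)\rho(x_1))<\ell(\rho(Y))=\ell(Y)$ and contradicting the reducedness of $\rho(X)=\rho(Y)\rho(x_1)$. Lemma \ref{cprepr6} then supplies $X=W_\alpha$ for $\alpha=\rho(x_1)\beta\in\mathcal{P}(c)$. Combined with the injectivity of $\nu$ established in Theorem \ref{cprepr2}(d), this yields the claimed bijection $\mathcal{P}(c)\to\mathrm{Red}\,\mathfrak{P}(c)$.

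Part (b) will then be immediate: by (a), every element of $\mathrm{Red}\,\mathfrak{P}(c)$ has the form $W_\alpha$ for some $\alpha\in\mathcal{P}(c)$, and Theorem \ref{cprepr4.5}(b) states precisely that $W_\alpha\preceq W_\beta$ if and only if $\rho(W_\alpha)\le_L\rho(W_\beta)$. This is the order-embedding assertion, and injectivity follows from antisymmetry of $\le_L$. For (c) I will simply compose the two bijections: $\mathcal{P}(c)\xrightarrow{\nu}\mathrm{Red}\,\mathfrak{P}(c)\xrightarrow{\rho}\rho(\mathrm{Red}\,\mathfrak{P}(c))$, where the image on the right is, by Definition \ref{cprepr11}, the set of elements of $\mathcal{W}$ having a $c$-principal reduced expression. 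The final sentence, that $\mathcal{P}(c)=\Phi^+$ when $\mathcal{W}$ is finite, is Theorem \ref{cprepr5}.

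The main obstacle is the first ingredient in the inductive step of (a), namely the structural fact that removing the first-applied (i.e., rightmost) letter of a $c$-principal word of length greater than one produces a principal word in the quiver $(\Gamma,\rho(x_1)c\rho(x_1))$. Verifying this requires a careful analysis of how canonical forms $X_r\dots X_1$ transform under stripping a sink letter from $X_1$, and will draw on the machinery of principal words developed in \cite{kt05,kt08}; once this structural input is in place, the rest of the argument is routine assembly of earlier results.
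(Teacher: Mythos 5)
Your argument is essentially the paper's proof: the same induction on $\ell(X)$, stripping the rightmost letter, applying the inductive hypothesis to $Y=x_l\dots x_2$ in the reflected quiver, and finishing with Lemma \ref{cprepr6}, with parts (b) and (c) assembled from Theorem \ref{cprepr4.5} exactly as in the paper; your check that $\beta\ne\alpha_{\rho(x_1)}$ (via $\ell(\rho(Y)\rho(x_1))<\ell(\rho(Y))$, which contradicts reducedness of $\rho(X)$) is a valid substitute for the paper's route, which instead computes $\rho(x_1)\beta=\rho(X^T)(-\alpha_{\rho(x_l)})>0$ from the reducedness of $\rho(X^T)$ and invokes \cite[Proposition 5.6(a)]{humph90}. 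The one step you flag as the main obstacle --- that $Y$ belongs to $\mathfrak P(\rho(x_1)c\rho(x_1))$ --- is not reproved in the paper either: it is quoted directly from \cite[Proposition 4.6]{kp07} (see also \cite[Proposition 2.5]{kt08}), so your argument is complete once you cite that result rather than redevelop it.
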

\begin{proof} (a) The  sufficiency is Theorem \ref{cprepr4.5}(a).  Let $X=x_l\dots x_1$ belong to $\mathfrak P(c)$ and suppose that $\rho(X)$ is reduced. We prove the necessity by induction on $l=\ell(X)$.

If $l=1$ then $X=x_1= W_{\alpha_{\rho(x_1)}}$ where $\alpha_{\rho(x_1)}\in\mathcal P(c)$  because $x_1$ is a sink.  Let  $l>1$ and  suppose that, for all Coxeter elements $d\in\mathcal W$, the statement  holds for all words in $\mathfrak P(d)$ of length $<l$.  By ~\cite[Proposition 4.6]{kp07}, see also ~\cite[Proposition 2.5]{kt08}, $Y=x_l\dots x_2$ belongs to $\mathfrak P\left({\rho(x_1)}c\rho({x_1})\right).$ Since $\rho(X)$ is reduced, $\rho(Y)$ is reduced by Lemma \ref{cprepr4}(a).   By the inductive hypothesis, $ Y= W_\beta$ for some $\beta\in\mathcal P\left({\rho(x_1)c\rho(x_1)}\right),$  so Theorem \ref{cprepr2}(c) says that $\beta=\rho\left(Y^T\right)\left(-\alpha_{\rho(x_l)}\right).$ In view of    ~\cite[Theorem 5.4]{humph90},
$$\rho(x_1)\beta=\rho(x_1)\rho\left(Y^T\right)\left(-\alpha_{\rho(x_l)}\right)=\rho\left(X^T\right)\left(-\alpha_{\rho(x_l)}\right)=\rho(x_1\dots x_{l-1})\alpha_{\rho(x_l)}>0$$ because the word  $\rho\left(X^T\right)$ is reduced.  Since   $\beta\ne\alpha_{\rho(x_1)}$  by ~\cite[Proposition 5.6(a)]{humph90}, Lemma \ref{cprepr6} says that  $X= W_\alpha$ for some $\alpha\in\mathcal P(c)$.

We have proved that the map $\nu:\mathcal P(c)\to\mathrm{Red}\,\mathfrak P(c)$ is surjective. It is injective by Theorem \ref{cprepr2}(d).\vskip.03in

(b)  This is an immediate consequence of (a) and Theorem \ref{cprepr4.5}(b).\vskip.03in

(c)  This follows from (a) and (b).  If $\mathcal W$ is finite, Theorem \ref{cprepr5} says that $\mathcal P(c)=\Phi^+.$ 
\end{proof}

We finish with a characterization of the words in $\mathrm{Red}\,\mathfrak M(c).$

\begin{thm}\label{cprepr9}  
\begin{itemize}
\item[(a)] For any $X\in \mathfrak M(c),$ the word $\rho(X)$ is reduced if and only if  $X=W_\Psi$  for some $\Psi\in\mathfrak i(c).$ Hence $\mathrm{Red}\,\mathfrak M(c)$ is the image of the map $\xi:\mathfrak{i}(c)\to\mathfrak M(c),$ and the map $\xi:\mathfrak i(c)\to\mathrm{Red}\,\mathfrak M(c)$  is bijective.
\item[(b)]  The map $\rho:\mathrm{Red}\,\mathfrak M(c)\to\mathcal W$ is order-embedding, hence, injective.  
\item[(c)]  The maps of (a) and (b) are bijections  between $\mathfrak i(c),$   $\mathrm{Red}\,\mathfrak M(c),$ and the set of elements of $\mathcal W$  having a $c$-admissible reduced expression.
\end{itemize}
\end{thm}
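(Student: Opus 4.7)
The plan is to extend Theorem \ref{cprepr7} from the principal case to the general $c$-admissible case, using the unique independent decomposition of Proposition \ref{independent.2} to reduce to the principal setting.

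For part (a), sufficiency---that $\rho(W_\Psi)$ is reduced whenever $\Psi \in \mathfrak i(c)$---is immediate from Theorem \ref{cprepr4.5}(a), since $\mathfrak i(c) \subset \mathfrak f(c)$. For the necessity, I will fix $X \in \mathfrak M(c)$ with $\rho(X)$ reduced. Proposition \ref{independent.2} supplies an independent subset $\{X_1, \dots, X_m\} \subset \mathfrak P(c)$ with $X = X_1 \vee \dots \vee X_m$. Because $X_i \preceq X$ for each $i$, Lemma \ref{cprepr4}(a) forces each $\rho(X_i)$ to be reduced as well, and Theorem \ref{cprepr7}(a) then yields $X_i = W_{\alpha_i}$ for some $\alpha_i \in \mathcal P(c)$. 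The set $\Psi = \{\alpha_1, \dots, \alpha_m\}$ lies in $\mathfrak i(c)$ by Definition \ref{independent} (since $\{W_{\alpha_1}, \dots, W_{\alpha_m}\} = \{X_1, \dots, X_m\}$ is independent in $\mathfrak P(c)$), and Proposition \ref{independent1}(a) delivers $W_\Psi = W_{\alpha_1} \vee \dots \vee W_{\alpha_m} = X$. Thus $\xi$ surjects onto $\mathrm{Red}\,\mathfrak M(c)$; its injectivity is already Proposition \ref{independent1}(b).

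Part (b) splits into the two halves of order-embedding. Order-preservation is Lemma \ref{cprepr4}(a): given $X \preceq Y$ in $\mathrm{Red}\,\mathfrak M(c)$, the reducedness of $\rho(Y)$ yields $\rho(X) \le_L \rho(Y)$. For order-reflection, I will invoke (a) to write $X = W_\Psi$ and $Y = W_\Theta$ with $\Psi, \Theta \in \mathfrak i(c)$; Theorem \ref{cprepr4.5}(d) then converts $\rho(W_\Psi) \le_L \rho(W_\Theta)$ into $W_\Psi \preceq W_\Theta$. Injectivity of $\rho$ on $\mathrm{Red}\,\mathfrak M(c)$ is a formal consequence of order-embedding together with antisymmetry of $\le_L$. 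Part (c) is then bookkeeping: $\xi : \mathfrak i(c) \to \mathrm{Red}\,\mathfrak M(c)$ is a bijection by (a), $\rho$ restricts to a bijection from $\mathrm{Red}\,\mathfrak M(c)$ onto its image by (b), and that image is precisely the set of elements of $\mathcal W$ admitting a $c$-admissible reduced expression, by Definition \ref{cprepr11}.

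The one substantive step is the necessity in (a), where I must propagate the reduced property of $\rho(X)$ to each summand $\rho(X_i)$ in the independent decomposition. This step rests entirely on Lemma \ref{cprepr4}(a); once it is in hand, Theorem \ref{cprepr7}(a) handles each $X_i$ individually and the rest is an assembly of Proposition \ref{independent.2}, Proposition \ref{independent1}, and Theorem \ref{cprepr4.5}. No new combinatorial argument should be required.
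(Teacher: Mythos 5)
Your proposal is correct and follows essentially the same route as the paper's own proof: sufficiency via Theorem \ref{cprepr4.5}(a), then the independent decomposition of Proposition \ref{independent.2} combined with Lemma \ref{cprepr4}(a) and Theorem \ref{cprepr7}(a) to identify each component as some $W_{\alpha_i}$, with Proposition \ref{independent1} assembling $X=W_\Psi$, and parts (b) and (c) deduced from (a) together with Theorem \ref{cprepr4.5}(d). Nothing is missing.
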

\begin{proof} (a) The  sufficiency is Theorem \ref{cprepr4.5}(a).  By Proposition \ref{independent.2}, $X= X_1\vee\dots\vee X_m$ where $\{X_1,\dots,X_m\}$ is an independent subset of $\mathfrak P(c).$  Suppose  $\rho(X)$ is reduced.  By Lemma \ref{cprepr4}(a), the word $\rho(X_j)$ is reduced for all $j,$ so Theorem \ref{cprepr7}(a) says that $X_j= W_{\alpha_j}$  for some $\alpha_j\in\mathcal P(c).$ By Definition \ref{independent},   $\Psi=\{\alpha_1,\dots,\alpha_m\}$ is an independent subset of $\mathcal P(c),$ so Proposition \ref{independent1} says that $X=W_\Psi$ where $\Psi$ is uniquely determined.

\vskip.03in(b) This is an immediate consequence of (a) and Theorem \ref{cprepr4.5}(d).

\vskip.03in(c) This follows from (a) and (b).
\end{proof}

\begin{rmk}\label{approximation}  For any $\Theta\in\mathfrak f(c),$ Theorem \ref{cprepr2}(a) gives the word $W_\Theta\in\mathfrak M(c).$ By Proposition \ref{independent.2}, there exists an independent subset $\{X_1,\dots,X_m\}$ of $\mathfrak P(c)$ for which $W_\Theta= X_1\vee\dots\vee X_m$  and the $X_i$ are unique up to permutation. Since the word $\rho(W_\Theta)$ is reduced by Theorem \ref{cprepr4.5}(a),  the word $\rho(X_i)$ is reduced for all $i$ by Lemma \ref{cprepr4}(a), and Theorem \ref{cprepr7}(a) says that $X_i= W_{\alpha_i}$ for a unique $\alpha_i\in\mathcal P(c).$  By Definition \ref{independent}, $\Psi=\{\alpha_1,\dots,\alpha_m\}$ is an independent subset of $\mathcal P(c),$ and Proposition \ref{independent1}(a) says that $W_\Theta= W_\Psi,$ while Proposition \ref{independent1}(b) says that $\Psi$ is uniquely determined.  Thus, for any $\Theta\in\mathfrak f(c)$ there exists a unique $\Psi\in\mathfrak i(c)$ satisfying $W_\Theta = W_\Psi.$  One may view $\Psi$ as an \lq\lq approximation" of $\Theta.$
\end{rmk}

\end{document}